\pgfplotsset{compat=1.14}
\theoremstyle{plain}
\newtheorem{theorem}{Theorem}
\newtheorem{corollary}[theorem]{Corollary}
\newtheorem{lemma}[theorem]{Lemma}
\newtheorem{conjecture}{Conjecture}
\theoremstyle{definition}
\newtheorem{definition}{Definition}
\newtheorem{remark}{Remark}
\DeclareMathOperator{\dom}{\text{dom}}
\title{Special Configurations in Anchored Rectangle Packings}
\author{Vincent Bian \\ \texttt{vincentbian@yahoo.com}}
\date{}
\begin{document}

\maketitle

\begin{abstract}
    Given a finite set S in $[0,1]^2$ including the origin, an anchored rectangle packing is a set of non-overlapping rectangles in the unit square where each rectangle has a point of S as its left-bottom corner and contains no point of S in its interior. Allen Freedman conjectured in the 1960's one can always find an anchored rectangle packing with total area at least $1/2$. We verify the conjecture for point configurations whose relative positions belong to certain classes of permutations.
\end{abstract}

\section{Introduction}

In general, packing problems involve fitting as many small shapes as possible into a larger one. These are useful in efficiently storing containers and in cutting shapes out of sheets of raw materials while minimizing waste. Many of these problems involve placing rectangles in a unit square, such that none of them overlap.

We consider a special packing problem where the lower left corner of all the rectangles are given. Such rectangles are called anchored. The problem was proposed by Allen Freedman, see \cite{WT}. Then it was reintroduced in many places \cite{PT}. Peter Winkler brought the attention of general public to this problem through \cite{PW_book} and \cite{PW_ACM}.

In the 1960's, it was conjectured by Allen Freedman that any set of points has an anchored rectangle packing with area at least $\frac{1}{2}$. In 2011, Dumitrescu and Tóth showed in \cite{AD} that every set of points has a packing of area at least 0.09, which was the first constant bound found, and is the best bound currently known.

We are looking at the point configuration as a permutation, depending on the value of the $y$-coordinate. We prove the conjecture for several special permutations which are illustrated in Figure~\ref{fig:six_types}. We describe the cases in more detail in the next paragraphs.

We prove the conjecture for several special cases: when the $y$-coordinates of the points form sequences with certain properties when the points are sorted by $x$-coordinate. 

Some of these cases are shown in Figure~\ref{fig:six_types}.

\begin{figure}
    \centering
    \includegraphics[scale = 0.7]{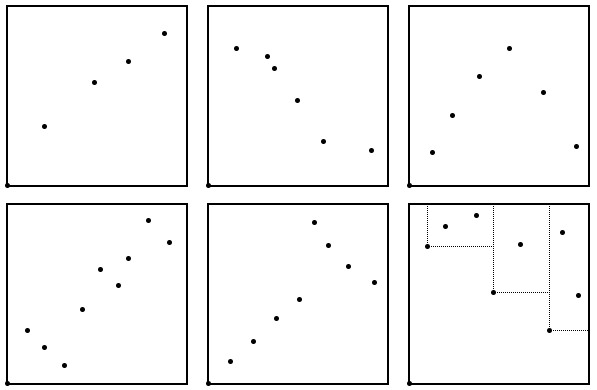}
    \caption{Increasing, decreasing, mountain, split layer, cliff, and sparse decreasing permutations, from left to right, top to bottom}
    \label{fig:six_types}
\end{figure}

We start in Section~\ref{sec:preliminaries}, providing all the definitions and basic statements. We view the point configuration excluding the origin as a permutation. We let $n$ be the number of points including origin. We have an extensive road map in Section~\ref{subsection:roadmap} after we give all the definitions.

In Section~\ref{sec:final_decreasing_run}, we study the final decreasing run, consisting of the maximal consecutive decreasing run that includes the rightmost point. We show that any packing of maximum area completely fills the area above and to the right of the points in the final decreasing run.

Next, in Section~\ref{sec:max_at_origin}, we consider the set of possible maximal rectangles anchored at the origin. We show that if the maximum area of a packing is at a local minimum with a set of points, then the areas of the possible maximal rectangles anchored at the origin are equal.

We start in Section~\ref{sec:presorted} with a special case where the beginning of a permuation is sorted. That is, the permutation starts with 1, 2, 3, $\ldots$, $m$. We discuss how to maximize the packing area when we know the best packing for the leftover permutation after. As a corollary, we get the bound of $\frac{1}{2} + \frac{1}{2n}$ for the identity permutation.

Then in Section~\ref{sec:decreasing} we cover the decreasing case and get a lower bound of $1 - \left(1 - \frac{1}{n}\right)^n$ on the area in this case.

We then discuss more general cases. First in Section~\ref{sec:split_layer} we concentrate on the case of layered permutations, that is permutations that consist of increasing sequences of decreasing runs. We show that every point set corresponding to a split layer permutation has a packing with area at least $\frac{1}{2}$.

We start with a special case of a permutation starting with a decreasing run layer of size $m$, followed by a layer of size 1.

The we prove the split-layer case, which is the layer case where between two consecutive decreasing runs at least one has the length 1.

We follow with a discussion of a cliff permutation, a special case of a split-layer permutation where only the last decreasing run can have length more than 1.

Another permutation we consider in Section~\ref{sec:mountain} is a mountain, which corresponds to a permutation that is increasing, then decreasing. We show that point sets corresponding to these mountain permutations have packings with area at least $\frac{1}{2}$.

Next, we find the minimum area for all $3! = 6$ possible permutation when $n = 4$ in Section~\ref{sec:four_dots}, explicitly showing that the minimum area is $\frac{5}{8}$, and finding the point set for which this bound is tight.

The greedy decreasing subsequence of a permutation is the subsequence of elements that are less than all preceding elements in the permutation. In Section~\ref{sec:sparse_decreasing} we consider the case where elements of this greedy decreasing subsequence are close together in the permutation, that is they are separated by fewer than $m$ points. We find lower bounds on the optimal area in this case, depending on the value of $m$.

The sparse case allows us to prove that all point sets with 9 or fewer points, including the origin, have an anchored rectangle packing with area at least $\frac{1}{2}$.

\section{Preliminaries}
\label{sec:preliminaries}

\subsection{Basic definitions and the goal}

Here is a setup for our problem \cite{WT}. 

We consider $n$ distinct points $P_i$, where $0 \leq i \leq n-1$, placed in the unit square $U = [0,1]^2$ in the plane, where one of the points is the origin $(0, 0)$.

For every point $P_k$ we choose a rectangle with sides that are parallel to axis such that the lower left corner of the rectangle is $P_k$. Such a rectangle is said to be \textit{anchored} at $P_k$. It is required that the interior of the rectangle does not contain any points or intersect with any other rectangles.

Among all possible anchored rectangle packings, we consider the one with the most area. For such a configuration, we denote a rectangle anchored at $P_k$ as $r_k$. We aim to find a lower bound on this maximal area.

Let $A(r)$ denote the area of a rectangle $r$. To repeat, our goal is to maximize the sum of the areas of all anchored rectangles:
\[ \sum_{k=0}^{n - 1} A(r_k).\]

We denote the minimum area among all sets of $n$ points as $m(n)$. The long-standing conjecture claims that $m(n)$ is at least a half \cite{WT}.

\begin{conjecture}[Main Conjecture]
\label{conjecture:main}
$m(n) \geq \frac{1}{2}$.
\end{conjecture}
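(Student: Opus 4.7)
The plan is to attack Conjecture~\ref{conjecture:main} by encoding each point configuration as a permutation $\pi$ on $\{1,\ldots,n-1\}$ (giving the $y$-rank of the $k$-th leftmost non-origin point) and proving the $\tfrac{1}{2}$ bound class-by-class, with the hope that the collection of classes we understand eventually covers all $\pi$ up to a reduction. Because the worst-case bound for arbitrary point sets is still only $0.09$, I would not try to attack the full conjecture head-on; instead I would build up combinatorial control one permutation family at a time, exactly as the rest of the paper outlines.

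The first step is to gather general structural tools that do not depend on the class of $\pi$. I would prove that a maximum-area packing must completely fill the staircase region lying above and to the right of the final decreasing run (Section~\ref{sec:final_decreasing_run}); this pins down a nontrivial portion of the area ``for free'' on the right side of the unit square. In parallel I would establish the local-minimum balancing principle from Section~\ref{sec:max_at_origin}: if $\pi$ is a point configuration at which $m(n)$ is locally minimized, then every maximal rectangle anchored at the origin has the same area. Taken together these give a normal form: it suffices to verify the bound at configurations in which the final decreasing run is ``tight'' and the origin-anchored rectangles are equalized, since any improvement elsewhere would let us perturb to a smaller configuration.

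Armed with this normal form I would then work through a ladder of permutation families, each handled by its own combinatorial argument. The easiest step is the identity, where presorting gives a clean inductive packing and yields $\tfrac{1}{2}+\tfrac{1}{2n}$ (Section~\ref{sec:presorted}). Next, the decreasing case admits a greedy staircase construction whose area is $1-(1-1/n)^n \to 1-1/e > \tfrac{1}{2}$ (Section~\ref{sec:decreasing}). Building on these, I would handle layered permutations by an inductive decomposition along the decreasing runs, with the split-layer and cliff subcases serving as the base scenarios where one of two adjacent layers has length one, and the mountain case handled separately because its single peak lets us anchor a tall rectangle at the origin (Sections~\ref{sec:split_layer} and~\ref{sec:mountain}). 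The sparse decreasing analysis of Section~\ref{sec:sparse_decreasing} then lets me bound any permutation whose greedy decreasing subsequence is tightly packed, and combining this with brute-force verification for $n \le 9$ gives another slice of configurations.

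The hard part, and the reason the conjecture has been open since the 1960s, is bridging these islands: I expect no single combinatorial scheme to cover every permutation, so the main obstacle is finding a reduction that sends an arbitrary $\pi$ to one of the tractable classes without losing area. My proposal is therefore deliberately partial — it proves Conjecture~\ref{conjecture:main} for the permutation families listed above, leaves the fully general case open, and frames subsequent work as searching for either a uniform potential-function argument or a richer class of ``good'' permutations that provably exhausts the worst cases under the normal form derived in Sections~\ref{sec:final_decreasing_run} and~\ref{sec:max_at_origin}.
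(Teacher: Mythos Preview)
Your proposal is essentially a faithful outline of exactly what the paper does: the paper does not prove Conjecture~\ref{conjecture:main} in full either, but establishes it class-by-class for the same permutation families you list (increasing, decreasing, split-layer, cliff, mountain, $m$-sparse decreasing with $m\le 3$, and all $n\le 9$), using the same two structural tools (the filled staircase of the final decreasing run and the equal-area condition on origin-anchored maximal rectangles at a local minimum). Since the statement is an open conjecture and your proposal is explicitly partial in the same way and along the same lines as the paper, there is nothing to correct or contrast.
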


It is known that this bound is tight, and the current best known lower bound is 0.09 \cite{AD}.

We can show that $m(n) \leq m(n+1)$. For any configuration $S$ with $n$ points, choose any point $P = (x, y)$. Then, add a point $P' = (x + \epsilon, y + \epsilon)$, for some small $\epsilon > 0$, to make a configuration $S'$ with $n + 1$ points. For any packing of $S'$, we can construct a packing of $S$ as follows: In the packing of $S'$, let $Q$ be the upper right corner of the rectangle anchored at $P'$. Then, delete the point $P'$, and replace the rectangle anchored at $P$ with a rectangle with upper right corner $Q$. This creates a packing of $S$.

For sufficiently small $\epsilon$, this new rectangle will not have any points on its interior. The rectangle anchored at $P$ in the packing of $S$ is strictly larger than the one anchored at $P'$ in the packing of $S'$, so the only extra area the packing of $S'$ has is the rectangle anchored at $P$, which has area at most $\epsilon$. Thus, for any packing of $S'$, there is a packing of $S$ whose area is smaller by at most $\epsilon$. As $\epsilon$ approaches 0, we see that $m(n) \leq m(n + 1)$.

Depending on the number of points, the calculations show that we can make a more precise conjecture.

\begin{conjecture}[Precise Conjecture]
\label{conjecture:precise} 
$m(n) \geq \frac{1}{2} + \frac{1}{2n}$, and this area is the maximum when the points are equally spaced on the main diagonal.
\end{conjecture}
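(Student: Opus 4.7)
The plan is to tackle the two halves of the statement separately: the tightness claim for the equally spaced diagonal, and the lower bound $m(n) \geq \tfrac{1}{2} + \tfrac{1}{2n}$. The tightness half is essentially a direct calculation, while the lower bound half is Freedman's conjecture in sharpened form, so the bulk of any honest plan lives in the latter.

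For tightness, I would take $P_i = (i/n, i/n)$ for $i = 0, \dots, n-1$ and exhibit the staircase packing in which $r_i = [i/n, 1] \times [i/n, (i+1)/n]$. These rectangles have pairwise disjoint interiors (consecutive rectangles meet only along a horizontal edge), and no $P_j$ lies in the interior of $r_i$, since that would require $i < j < i+1$ for an integer $j$. The total area is
\[
\sum_{i=0}^{n-1} \frac{n-i}{n^2} \;=\; \frac{1}{n^2}\cdot\frac{n(n+1)}{2} \;=\; \frac{1}{2} + \frac{1}{2n}.
\]
To see this packing is actually optimal for this specific configuration (not just a candidate), I would argue by an exchange argument along the diagonal: any $r_i$ that exceeds height $1/n$ must be blocked by some $P_j$ with $j > i$, forcing $r_j$ into a strictly smaller rectangle, and a careful slope comparison at the contact edge shows the loss dominates the gain.

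For the general lower bound, I would attempt induction on $n$, branching on the permutation pattern formed by the $y$-order of the points sorted by $x$. The paper's roadmap already handles many structured classes (increasing, decreasing, mountain, layered, split-layer, cliff, sparse-decreasing); the inductive step would try to remove a carefully chosen point, for instance the rightmost point in the final decreasing run analyzed in Section~\ref{sec:final_decreasing_run}, and show that the remaining $n-1$ points still satisfy the bound while the removed point contributes an additional area of at least $\tfrac{1}{n-1} - \tfrac{1}{n} = \tfrac{1}{n(n-1)}$. The monotonicity $m(n) \leq m(n+1)$ established in the excerpt gives one direction; the other direction (a strict gain under deletion of a well-chosen point) is the nontrivial input to the induction.

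The main obstacle is unavoidable: this is Conjecture~\ref{conjecture:main} in its precise form, which has resisted attack for six decades, and even the weaker bound $\tfrac{1}{2}$ is open with the best known constant at $0.09$. The techniques that succeed in the structured classes (explicit staircase constructions, area-charging across a shared vertical cut, or local perturbations) all exploit the monotone geometry of those classes and cease to be sharp once nested increasing and decreasing runs interact. Realistically, I would expect to prove the conjecture class by class (matching the paper's program) and leave the fully general statement conditional on Conjecture~\ref{conjecture:main} plus a quantitative improvement by $\tfrac{1}{2n}$.
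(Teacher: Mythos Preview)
The statement you are addressing is a \emph{conjecture}, not a theorem: the paper does not prove it, and indeed explicitly frames it as a sharpening of the long-open Conjecture~\ref{conjecture:main}. So there is no ``paper's own proof'' to compare against for the general lower bound, and your final paragraph is the honest assessment --- the inductive scheme you sketch (remove a point from the final decreasing run and recover a gain of $\tfrac{1}{n(n-1)}$) is exactly the kind of step that fails in general, since the paper's monotonicity argument $m(n)\le m(n+1)$ goes the wrong direction and no one knows how to control the loss when deleting a point from an arbitrary configuration.

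On the tightness direction, the paper \emph{does} establish that the diagonal configuration has maximum packing area exactly $\tfrac{1}{2}+\tfrac{1}{2n}$, but not via your exchange argument. It is obtained as Corollary~\ref{theorem:increasing_case}, a special case of the presorted analysis in Theorem~\ref{theorem:increasing_start}: one inducts on the number of increasing points, applying Lemma~\ref{lemma:start1} at each step, which in turn rests on Lemma~\ref{lemma:maximal_at_origin} (equal-area maximal rectangles at the origin force $P_1$ onto the diagonal in a locally minimal configuration). Your exchange argument --- ``a careful slope comparison at the contact edge shows the loss dominates the gain'' --- is not fleshed out enough to stand on its own; in particular you have only exhibited one packing of area $\tfrac{1}{2}+\tfrac{1}{2n}$ and asserted optimality, whereas the paper's route actually proves that no packing of the diagonal configuration does better.
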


\subsection{Maximizing and minimizing area}

As we are only interested in the maximum possible area, we only consider configurations whose areas can not be increased with small changes. Define a \textit{maximal rectangle} to be one whose upper edge touches either the upper boundary of the unit square, or another point, and whose right edge touches either the right boundary of the unit square, or another point. From now on we only consider maximal anchored rectangles. As before, we denote a maximal rectangle anchored at point $P_k$ as $r_k$.

Given a set of points, there might be several ways to draw anchored rectangles. For example, Figure~\ref{fig:two_maximal} has two different ways to use maximal rectangles and the first packing provides better area.

\begin{figure}
    \centering
    \includegraphics[scale = 0.7]{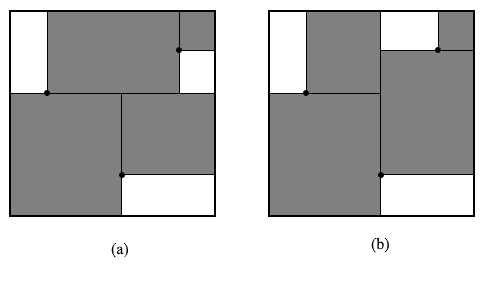}
    \caption{(a) fills 79\% of the square, while (b) fills 77\% of the square}
    \label{fig:two_maximal}
\end{figure}

A set of points is a \textit{locally minimal point set} if the maximum area can not be decreased by moving each point by a small amount. 
For example, if we have $S = \{(0, 0), (0.3, 0.4)\}$, as shown in Figure~\ref{fig:local_minimum_example}, then the maximum area is $0.82$, but when we nudge the second point right to $(0.4, 0.4)$, the maximum area drops to $0.76$.

\begin{figure}
    \centering
    \includegraphics[scale = 0.7]{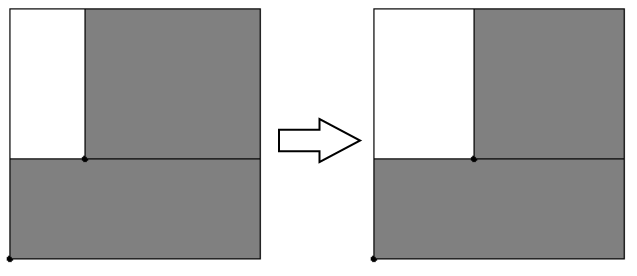}
    \caption{$S = \{(0, 0), (0.3, 0.4)\}$ is not locally minimal}
    \label{fig:local_minimum_example}
\end{figure}

As we later use the idea of packing a subrectangle, we explain the idea of scaling a packing.

\begin{lemma}
\label{lemma:scaling}
Any results about the proportion of the unit square we can fill apply to any rectangle.
\end{lemma}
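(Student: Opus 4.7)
The plan is to use an axis-aligned affine map that sends the given rectangle to the unit square, and check that every piece of data in the anchored-packing problem is preserved up to a uniform scale factor. So let $R = [a, a+w] \times [b, b+h]$ be an arbitrary axis-aligned rectangle in the plane, and consider a point configuration in $R$ whose bottom-left corner is $(a,b)$ (playing the role of the origin). I would introduce the map
\[ \varphi(x,y) \;=\; \Bigl(\tfrac{x-a}{w},\; \tfrac{y-b}{h}\Bigr), \]
which is a bijection from $R$ to $U = [0,1]^2$ sending $(a,b)$ to the origin.

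Next I would verify the three structural features that define the problem are preserved. First, $\varphi$ sends axis-aligned rectangles to axis-aligned rectangles, and since it is coordinatewise increasing, it sends the lower-left corner of each rectangle to the lower-left corner of its image; hence a rectangle anchored at $P_k \in R$ maps to a rectangle anchored at $\varphi(P_k) \in U$. Second, $\varphi$ is a bijection, so a rectangle contains a point in its interior if and only if its image does, and two rectangles in $R$ overlap in their interiors if and only if their images in $U$ do. Third, $\varphi$ scales every area by the constant factor $1/(wh)$, so if $\mathcal{P}$ is a packing in $R$ of total area $A$, then $\varphi(\mathcal{P})$ is a packing in $U$ of total area $A/(wh)$, and conversely pulling back by $\varphi^{-1}$ multiplies area by $wh$.

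Consequently, if every point configuration in $U$ admits an anchored packing of area at least $c$, then applying $\varphi^{-1}$ to such a packing for $\varphi(S)$ yields, for every configuration $S$ in $R$ with lower-left corner as the origin, an anchored packing of area at least $c \cdot wh$, i.e.\ at least a fraction $c$ of the area of $R$. The same reasoning in reverse shows that an upper bound on achievable proportion in $U$ transfers to $R$ as well, giving the stated equivalence.

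There is essentially no hard step: the lemma is a scaling remark, and the only thing one has to be careful about is that both coordinates are scaled independently (not uniformly), so lengths and individual side ratios are \emph{not} preserved, only the property of being axis-aligned and the ratio of areas. I would flag this explicitly so that later applications only invoke conclusions about area proportion, which is all the lemma promises.
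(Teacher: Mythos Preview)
Your proof is correct and follows essentially the same approach as the paper: both argue via an axis-aligned affine rescaling that sends the rectangle to the unit square, noting that axis-aligned rectangles, anchoring, non-overlap, and area proportions are all preserved. Your version is slightly more explicit (handling translation and flagging that only area ratios survive, not lengths), but the content is the same.
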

\begin{proof}
Consider any anchored rectangle packing inside an $a \times b$ rectangle. Then, we can scale it vertically with scale factor $\frac{1}{a}$, and scale it horizontally with scale factor $\frac{1}{b}$. Note that this scaling preserves rectangles whose sides are parallel to the axes, and it also preserves the property that no two rectangle overlap. Also, as all areas are scaled by the same amount, the proportion of the rectangle filled is preserved as well.
\end{proof}

\subsection{Posets}

Define a poset on the points, where a point $(x_1, y_1) \preceq (x_2, y_2)$ iff $x_1 \leq x_2$ and $y_1 \leq y_2$. We say that point $(x_2, y_2)$ \emph{dominates} point $(x_1, y_1)$.

Note that if $P \preceq Q$, then the rectangles anchored at $P$ and $Q$ are independent, meaning that the rectangle anchored at $P$ can not overlap with the rectangle anchored at $Q$ without containing $Q$ in its interior. In particular, it means that any rectangle anchored at $P$ does not have an influence on the choice of a rectangle at $Q$, and vice versa.

Because of that it is useful to consider the relative order of the $y$-coordinates.
Here are some definitions with respect to ordering.

\subsection{Permutations}

We number the points left to right bottom to top and denote the points $P_0$, $P_1$, $\ldots$, $P_{n - 1}$. In particular, $P_0 = (0, 0)$.

We say a set of points $S = \{(0, 0), (x_1, y_1), (x_2, y_2), \ldots , (x_{n - 1}, y_{n - 1})\}$ satisfies a permutation $\pi$ of $\{1, 2, \ldots , n - 1\}$ if we have $P_i \preceq P_j$ iff $i \leq j$ and $\pi(i) \leq \pi(j)$. Note that this means $x_1 \leq x_2 \leq \cdots \leq x_{n - 1}$ and $y_{\pi(1)} \leq y_{\pi(2)} \leq \cdots \leq y_{\pi(n - 1)}$.

Let $M(\pi)$ denote the minimum possible maximal area when the points satisfy the permutation $\pi$.

\begin{theorem}
For any permutation $\pi$, we have $M(\pi) = M(\pi^{-1})$.
\end{theorem}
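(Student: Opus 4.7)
The plan is to use reflection across the line $y=x$ as a symmetry that swaps a configuration satisfying $\pi$ with one satisfying $\pi^{-1}$, and preserves anchored rectangle packings.

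First, I would define the map $\sigma : [0,1]^2 \to [0,1]^2$ by $\sigma(x,y) = (y,x)$. This is an involution, it fixes the origin, it maps the unit square to itself, and it sends axis-parallel rectangles to axis-parallel rectangles. Crucially, if $R = [a_1,a_2]\times[b_1,b_2]$ is a rectangle with lower-left corner $(a_1,b_1)$, then $\sigma(R) = [b_1,b_2]\times[a_1,a_2]$ is again an axis-parallel rectangle, with lower-left corner $(b_1,a_1) = \sigma(a_1,b_1)$. Thus $\sigma$ sends a rectangle anchored at $P$ to a rectangle anchored at $\sigma(P)$, preserves area, preserves the relation "no other point of the set lies in the interior", and sends non-overlapping rectangles to non-overlapping rectangles.

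Next I would check that if $S = \{P_0,\dots,P_{n-1}\}$ satisfies $\pi$, then $\sigma(S)$ satisfies $\pi^{-1}$. Writing $P_i = (x_i,y_i)$ with $x_1 \leq \cdots \leq x_{n-1}$ and $y_{\pi(1)} \leq \cdots \leq y_{\pi(n-1)}$, the image $\sigma(P_i) = (y_i,x_i)$ has first coordinate equal to the old $y$-coordinate and second coordinate equal to the old $x$-coordinate. If we reindex the image points in increasing order of their new first coordinate, the point whose new $x$-rank is $j$ is the one with old $y$-rank $j$, i.e., the original $P_{\pi^{-1}(j)}$; its new $y$-coordinate, namely $x_{\pi^{-1}(j)}$, has rank $\pi^{-1}(j)$. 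Hence the permutation carried by $\sigma(S)$ is exactly $\pi^{-1}$.

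Putting these together, $\sigma$ induces an area-preserving bijection between anchored rectangle packings of $S$ and of $\sigma(S)$. Consequently, taking the supremum of total area over packings, the maximum area of a packing on $S$ equals that on $\sigma(S)$. Taking the infimum over all point sets satisfying $\pi$ (respectively $\pi^{-1}$), and noting that $\sigma$ is a bijection between these two families of point sets, yields $M(\pi) = M(\pi^{-1})$. No step here is genuinely hard; the only thing to be careful about is the bookkeeping in the second paragraph, verifying that the swapping of $x$- and $y$-ranks really does invert the permutation rather than, say, merely transposing it.
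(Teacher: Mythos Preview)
Your proposal is correct and follows essentially the same approach as the paper: reflect across the line $y=x$ to obtain an area-preserving bijection between point configurations satisfying $\pi$ and those satisfying $\pi^{-1}$, hence $M(\pi)=M(\pi^{-1})$. The paper's proof is a two-sentence version of exactly this argument, while you have just spelled out the bookkeeping more carefully.
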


\begin{proof}
Note that the reflection of a configuration of points satisfying $\pi$ about the line $y = x$ will satisfy $\pi^{-1}$. Thus, we have a natural bijection between configurations of points satisfying $\pi$ and $\pi^{-1}$. Since reflection preserves area, the minimal possible maximum area across configurations satisfying both permutations will be the same.
\end{proof}

This fact leads to the following conjecture:

\begin{conjecture}
\label{conjecture:self_inverse}
If a permutation $\pi$ is its own inverse, then any set of points that minimizes the maximum area is symmetric about $y = x$.
\end{conjecture}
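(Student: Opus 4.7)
The plan is built around the reflection $\sigma$ of the plane across $y = x$, which, as in the proof of $M(\pi) = M(\pi^{-1})$ above, acts as an area-preserving involution on the space $\mathcal{X}_\pi$ of configurations satisfying the self-inverse permutation $\pi$. Hence the set $\mathcal{M} \subseteq \mathcal{X}_\pi$ of minimizers of the maximum-area function $A$ is $\sigma$-invariant, and the conjecture reduces to showing that every element of $\mathcal{M}$ is a fixed point of $\sigma$.

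My first step would be to introduce the natural straight-line symmetrization. For $S \in \mathcal{X}_\pi$ with points $P_i = (x_i, y_i)$, define the interpolated configuration $S_t$ whose $i$-th point (in $x$-order) is
\[
\bigl((1-t)\,x_i + t\,y_{\pi(i)},\ (1-t)\,y_i + t\,x_{\pi(i)}\bigr),
\]
so that $S_0 = S$, $S_1 = \sigma(S)$, and $S_{1/2}$ is $\sigma$-fixed. A preliminary check confirms that $S_t \in \mathcal{X}_\pi$ for every $t \in [0,1]$: the difference $x_{i+1}(t) - x_i(t)$ expands as $(1-t)(x_{i+1} - x_i) + t(y_{\pi(i+1)} - y_{\pi(i)})$, which is non-negative because both summands are, and the analogous statement for the $y$-order holds once one invokes $\pi = \pi^{-1}$ to rewrite the relevant indices.

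The heart of the argument would then be to establish the strict inequality
\[
A(S_{1/2}) < A(S) \quad \text{whenever } S \neq \sigma(S),
\]
which would finish the conjecture, since any asymmetric $S \in \mathcal{M}$ would then give $A(S_{1/2}) < M(\pi)$, contradicting that $M(\pi)$ is the infimum of $A$ on $\mathcal{X}_\pi$. Unfortunately $A$ is not globally convex on $\mathcal{X}_\pi$ — it is an upper envelope of quadratic functions indexed by combinatorial packing type, and along $S_t$ each such quadratic has second derivative of the form $2(\Delta x_j - \Delta x_i)(\Delta y_k - \Delta y_i)$, which can have either sign. The most promising route therefore seems constructive: take an optimal packing $\mathcal{R}$ of $S$ and its mirror $\sigma(\mathcal{R})$ of $\sigma(S)$, and assemble from them a competitor packing of $S_{1/2}$ by averaging anchor points and shrinking or rerouting rectangles wherever they would now conflict. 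Quantifying the area slack introduced by this process, and showing that it is strictly positive exactly when $S \neq \sigma(S)$ rather than vanishing in degenerate cases, is the main obstacle. I expect this will require either a case analysis indexed by the combinatorial type of the two optimal packings, or a reduction first to locally minimal point sets (in the sense of Subsection 2.2) where the extra rigidity constrains the possible structure of any purported asymmetric minimizer.
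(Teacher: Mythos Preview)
The statement you are trying to prove is labeled a \emph{conjecture} in the paper, not a theorem: the paper offers no proof, and indeed presents it only as a plausible consequence of the symmetry $M(\pi) = M(\pi^{-1})$, supported by the explicit small cases computed later (the increasing, decreasing, and $(2,1,3)$ configurations all have symmetric minimizers). So there is no ``paper's own proof'' to compare against.

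As for your outline itself, it is an honest sketch rather than a proof, and you correctly flag the gap. The symmetrization map $S \mapsto S_{1/2}$ is well defined on $\mathcal{X}_\pi$, but the crucial inequality $A(S_{1/2}) < A(S)$ for asymmetric $S$ does not follow from any convexity you have established: as you note, $A$ is an upper envelope of quadratics whose second variation along the segment can have either sign. Your fallback plan of averaging the two optimal packings $\mathcal{R}$ and $\sigma(\mathcal{R})$ into a competitor on $S_{1/2}$ is natural, but there is no reason the averaged rectangles should be pairwise disjoint, and once you start shrinking to repair overlaps you may lose more area than you gain; controlling that loss is exactly the hard part, and nothing in the proposal indicates how to do it. It is also worth noting that the conjecture could be true while $A(S_{1/2}) < A(S)$ is false for some asymmetric non-minimizer $S$, so even a successful version of your argument would need to exploit minimality of $S$ (e.g., via the local-minimality constraints of Section~\ref{sec:max_at_origin}) rather than attempt the inequality for arbitrary $S$.
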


\begin{theorem}
The maximum area is a continuous function of the points.
\end{theorem}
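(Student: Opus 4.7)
The plan is to prove the maximum area function $f(S)$ is both upper and lower semicontinuous, hence continuous. I parameterize each packing by the upper-right corners of its $n$ rectangles, giving an element of the compact set $[0,1]^{2n}$.

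For upper semicontinuity, given $S^k \to S$ with optimal packings $R^k$, I extract a convergent subsequence $R^k \to R$ by compactness. The conditions defining a valid packing---correct anchoring, pairwise disjoint interiors, and no other anchor in any open interior---are each closed in the corner coordinates. Hence $R$ is a valid packing of $S$, and since total area is continuous in these coordinates, $\limsup_k f(S^k) \leq \mathrm{area}(R) \leq f(S)$.

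For lower semicontinuity, I fix an optimal packing $R = \{r_i = [x_i, u_i] \times [y_i, v_i]\}$ of $S$ and, for any $S'$ with $\|S' - S\|_\infty < \delta$, define
\[
r_i' = [x_i',\, u_i - \delta] \times [y_i',\, v_i - \delta]
\]
(replaced by the empty rectangle when a dimension becomes non-positive); this is the rectangle anchored at $P_i'$ whose top and right edges sit at the $\delta$-shrunk original position. Non-overlap is immediate: if $u_i \leq x_j$, then the right edge of $r_i'$ lies at $u_i - \delta \leq x_j - \delta \leq x_j'$, the left edge of $r_j'$, and the $y$-separated case is symmetric. A direct computation gives $\mathrm{area}(r_i') \geq \mathrm{area}(r_i) - O(\delta)$, so the total area is at least $f(S) - O(n\delta)$.

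The main obstacle is the condition that no anchor $P_k'$ lies in the interior of $r_i'$. Unwinding the inequalities forces any hypothetical offender $P_k$ into the L-shaped strip $(x_i - 2\delta, u_i) \times (y_i - 2\delta, v_i) \setminus \mathrm{int}(r_i)$, i.e., on or within $2\delta$ of the left or bottom edge of $r_i$. For $\delta$ smaller than half the minimum positive difference in $x$- or $y$-coordinate among pairs of points of $S$, this can occur only when some $P_k$ actually lies on the interior of the left or bottom edge of $r_i$---in which case $r_k$ is forced to be degenerate. To preclude this, I plan to first normalize $R$: whenever $P_k$ sits on the interior of the bottom edge of $r_i$, replace $r_i$ with $[x_i, x_k] \times [y_i, v_i]$ and promote the detached right portion $[x_k, u_i] \times [y_i, v_i]$ to be the new $r_k$, with the transposed move for left edges. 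Each step preserves total area (the old $r_k$ had area zero), and with careful ordering of the moves (e.g.\ always resolving the rightmost or topmost incidence first) the total number of bad incidences strictly decreases, so iteration terminates at an equivalent optimal packing on which the translate-and-shrink construction is valid. Letting $\delta \to 0$ then gives $\liminf_k f(S^k) \geq f(S)$, completing the proof.
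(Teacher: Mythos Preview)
Your proof is correct. The organization differs from the paper's, though the key device is shared. The paper moves a single coordinate of one point by $\epsilon$ and shows directly that the optimum changes by at most $|\epsilon|$, i.e.\ it proves a $1$-Lipschitz bound coordinate-by-coordinate; it builds a packing of $S'$ from one of $S$ by only touching the rectangle at the moved point and at most one neighboring rectangle. Your route is the semicontinuity decomposition: upper semicontinuity from compactness of $[0,1]^{2n}$ and closedness of the feasibility constraints, and lower semicontinuity from a uniform ``shrink every top and right edge by $\delta$'' construction, which costs $O(n\delta)$ in area.

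The obstruction you isolate---an anchor $P_k$ sitting on the interior of the left or bottom edge of some $r_i$---is exactly the paper's ``stuck'' case, and your splitting normalization is the same trick the paper uses (its Figure~\ref{fig:split_rectangle}). Your termination argument is sound: each split removes the incidence $(k,i)$, relocates any other bottom-edge incidences of $r_i$ to either the truncated $r_i$ or the new $r_k$, and creates no new left-edge incidences on $r_k$ because the new left edge lay in the old interior of $r_i$; so the bad-incidence count drops by exactly one regardless of the order, and the remark about ``careful ordering'' is not actually needed.

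What each approach buys: the paper's gives a sharper quantitative conclusion (Lipschitz with constant $1$ per coordinate) and touches only local data; yours gives a cleaner conceptual split, with the upper-semicontinuity half coming essentially for free from compactness, at the cost of a weaker modulus of continuity.
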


\begin{proof}
We show that replacing a point $P(x, y)$ with $P'(x + \epsilon, y)$, for sufficiently small $|\epsilon| > 0$, changes the maximum possible area by at most $|\epsilon|$. Let the original point set with point $P$ be set $S$, and let the new set with point $P'$ be set $S'$. For every packing of $S$ with area $a$, we construct a packing of $S'$ with area between $a - |\epsilon|$ and $a + |\epsilon|$, and for every packing of $S'$ with area $a'$, we construct a packing of $S$ with area between $a' - |\epsilon|$ and $a' + |\epsilon|$.

We consider two cases, based on whether or not $P$ shares an $x$-coordinate with any other point in $S$.

\paragraph*{Case 1: $P$ does not share its $x$-coordinate \\}
Let $a$ be the area of a packing of $S$, and let $Q$ be the upper right corner of the maximal rectangle anchored at $P$.

Let $d$ be the smallest absolute difference between the $x$-coordinate of $P$ and the $x$-coordinate of another point in $S$. As long as $|\epsilon| < d$, we can replace $P$ with $P'$, and the rectangle anchored at $P$ with a rectangle anchored at $Q$ with the same upper right corner. As $|\epsilon| < d$, this new rectangle anchored at $P'$ will not have any other points in $S'$ on its interior.

In the case that $\epsilon$ is negative, and $P'$ lies inside a rectangle, we can move the right boundary of this rectangle to the left, as shown in Figure~\ref{fig:moving_point_dif_x}.

\begin{figure}
    \centering
    \includegraphics[scale = 0.7]{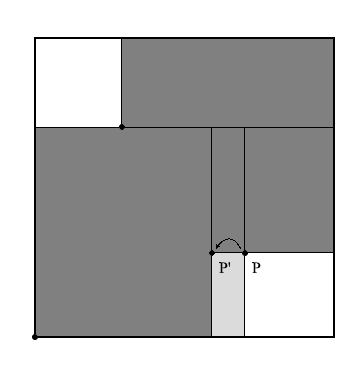}
    \caption{Moving $P$ to $P'$, and moving the right boundary of a rectangle leftwards}
    \label{fig:moving_point_dif_x}
\end{figure}

If $\epsilon$ is positive, then the only change in area is due to the decrease in the width of rectangle anchored at $P$. This changes the total area by at most $\epsilon$.

If $\epsilon$ is negative, then the rectangle anchored at $P$ gets wider, and its area increases by at most $|\epsilon|$. However, if $P'$ lies within one of the maximal rectangles in the packing of $S$, then we reduce the width of this rectangle in the packing of $S'$. As this change in width is at most $|\epsilon|$, this reduces the total area by at most $|\epsilon|$. Thus, the net change is always at most $|\epsilon|$.

Therefore, this process produces a packing of $S'$ with area between $a - |\epsilon|$ and $a + |\epsilon|$.

A similar process takes a packing of $S'$ of area $a'$ and produces a packing of $S$ with area between $a' - |\epsilon|$ and $a' + |\epsilon|$.

\paragraph*{Case 2: $P$ shares its $x$-coordinate \\}
Consider a packing of $S$. We say that the point $P$ is \textit{stuck} if $\epsilon > 0$ and $P$ is bounded on the right by a rectangle anchored at a point directly below $P$, or $\epsilon < 0$ and a point directly above $P$ is bounded on the right by a rectangle anchored at $P$. The two ways in which point $P$ can be stuck are illustrated in Figure~\ref{fig:stuck_points}. Note that unless $P$ is stuck, the procedure described in Case 1 produces a packing of $S'$ with area between $a - |\epsilon|$ and $a + |\epsilon|$.

\begin{figure}
    \centering
    \includegraphics[scale = 0.5]{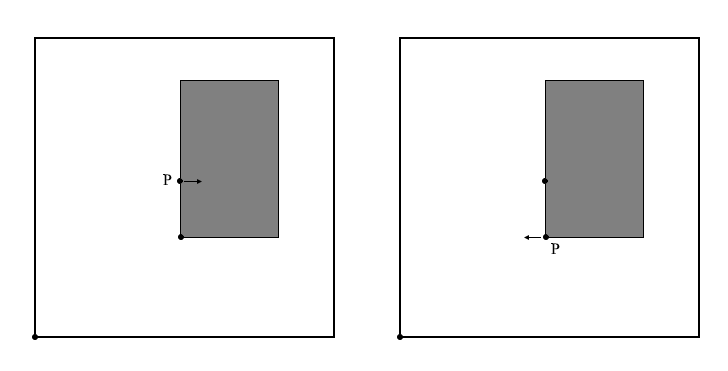}
    \caption{$\epsilon > 0$ on the left, $\epsilon < 0$ on the right}
    \label{fig:stuck_points}
\end{figure}

Now, if $P$ is stuck, we first construct another packing of $S$ with the same area, in which $P$ is not stuck. Regardless of how $P$ is stuck, there will be a rectangle directly to the right of $P$, and a point directly above or below $P$. We can split this rectangle into a part anchored at $P$, and a part anchored at the other point, as shown in Figure~\ref{fig:split_rectangle}. After splitting the rectangle, we can again use the procedure in Case 1 to produce a packing of $S'$ whose area differs by at most $|\epsilon|$.

\begin{figure}
    \centering
    \includegraphics[scale = 0.5]{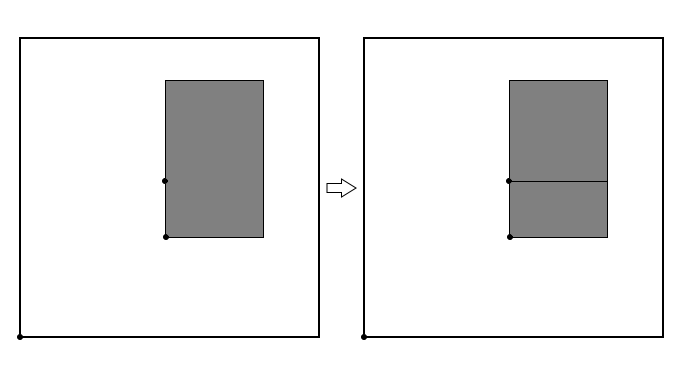}
    \caption{Depending on how $P$ is stuck, $P$ might be the upper or the lower point}
    \label{fig:split_rectangle}
\end{figure}

Finally, notice that given a packing of $S'$, the procedure detailed in Case 1 will once again produce a packing of $S$ whose area differs by at most $|\epsilon|$.

By symmetry, moving a point by $\epsilon$ in the $y$ direction, for sufficiently small $\epsilon$, will also change the maximum area by at most $|\epsilon|$.

Thus, the maximum area varies continuously as we move a single point. As any movement of the points can be written as a series of movements of each individual point, the maximum area is a continuous function of the points.
\end{proof}

We study some particular configurations of permutations. We start with some definitions.

We call a point $P_i$ in $S$ a \textit{splitting point} if $y_j < y_i$ for $j < i$, and $y_j > y_i$ for $j > i$. In terms of permutations, $\pi(j) < \pi(i)$ iff $j < i$. Note that this implies $\pi(i) = i$. If we have a splitting point, then the rectangles anchored at points $P_i$ to $P_{n - 1}$ do not interfere with the rectangles anchored at $P_1$ to $P_{i - 1}$. We can look at these two groups of points independently.

A \textit{layered permutation} is a permutation that consists of sequences of decreasing runs, so that the smallest value of the next run is greater than the largest value of the previous run. Each run is called a \textit{layer}, and the permutation is uniquely defined by the set of lengths of the layers.

We say a layered permutation is a \textit{split-layer permutation} if among any pair of consecutive layers, one has size 1. Note that a point in a layer of size 1 is a fixed point of the permutation.

For a set $S$ of points, define its \textit{dominant} point to be the minimal point $Q$ in the square such that $P \preceq Q$ for $P \in S$. Call this point  $\dom(S)$. If $S$ is a set of points corresponding to a layer, then $S = \{(x_i,y_i), \ldots, (x_j,y_j)\}$, and $\dom(S) = (x_j, y_i)$. If point $Q=(x_i,y_i)$ is a decreasing run of size 1 as a part of a layered permutation, then $Q=\dom(\{P_0, P_1, \ldots, P_i\})$.

Suppose we have a point $(x,y)$, we call a rectangle with lower left corner $(x,y)$ and upper right corner $(1,1)$ \textit{complete}.

Define the \textit{induced} packing on a rectangle $R$ with lower left corner $(x, y)$ to be an anchored packing of maximal area using $(x, y)$, and $\{P_0, P_1, \ldots, P_{n - 1}\} \cap R$.

\subsection{Road map}
\label{subsection:roadmap}

Here is an extensive road map for the rest of the paper. We study different cases depending on the permutation.

We study maximal rectangles at the origin in Section~\ref{sec:max_at_origin} and show that the points that are blocking them should be spaced in such a way that the possible maximal rectangles anchored at the origin are all the same area. This case helps with the decreasing case and the sparse decreasing case.

We consider a more general case for which the all increasing case is a special case. 

\begin{itemize}
    \item \textit{Presorted} permutations are permutations that begin with $(1, 2, \ldots, m)$ for some $1 \leq m \leq n - 1$, and are studied in Section~\ref{sec:presorted}.
    \item \textit{All increasing} permutations correspond to the identity permutation. We are interested in all increasing permutations because by the Precise Conjecture~\ref{conjecture:precise} they include the configurations of points with the smallest maximum area. We prove the Precise Conjecture~\ref{conjecture:precise} for the increasing permutation in Section~\ref{subsection:increasing}.
\end{itemize}

We then study the decreasing permutation.

\begin{itemize}
    \item The \textit{all decreasing} case corresponds to the permutation $(n - 1, n - 2, \ldots , 1)$. It is a layer permutation with exactly one layer. We show the Precise Conjecture~\ref{conjecture:precise} for this case by proving the minimum area is $1 - \left(1 - \frac{1}{n}\right)^n$ in Section~\ref{sec:decreasing}.
\end{itemize}

We discuss other more general cases. First we concentrate on the case of layered permutations. We begin with a special case of a permutation starting with a layer of size $m$, followed by a layer of size 1, possibly with other points after that.

\begin{itemize}
    \item A \textit{prelayered} permutation is one that starts with $(m, m - 1, \ldots, 1, m + 1)$, for some $m$. These are covered in Section~\ref{subsection:prelayered}. When constructing an anchored rectangle packing of maximum area, it suffices to separately maximize the area of the rectangles anchored on $P_1$, $\ldots$, $P_m$, and $P_{m + 1}$, $\ldots$, $P_{n - 1}$.
\end{itemize}

Then we prove a split-layer case.

\begin{itemize}
    \item A \textit{layer} permutation is of the form $(a_1, a_1 - 1, \ldots, 1, a_2, a_2 - 1, \ldots, a_1 + 1, \ldots, a_{\ell}, a_{\ell} - 1, \ldots, a_{\ell - 1} + 1)$, where $a_1 < a_2 < \cdots < a_{\ell}$. We say that $\ell$ is the number of layers, and the size of each layer is the number of points in it.
    \item A \textit{split-layer} permutation is a layer permutation where every pair of consecutive layers has at least one layer of size 1. We prove the Main Conjecture~\ref{conjecture:main} for point sets in this split layer case in Section~\ref{sec:split_layer}.
    \item \textit{A cliff} corresponds to the permutation $(1, 2, 3, \ldots, k, n - 1 , n - 2, \ldots, k + 1)$. This is a subcase of a split-layer permutation for which we have explicit formulae for the minimum area, and we find this formula in Section~\ref{subsection:cliff}.
\end{itemize}

All these patterns correspond to permutations that are their own inverses, except for the presorted and prelayer cases.

Another permutation we consider is a \textit{mountain}, which corresponds to a permutation that is increasing, then decreasing. Note that mountain permutations are not necessarily their own inverses, unlike many of the previously discussed cases.

\begin{itemize}
    \item A \textit{mountain} is a permutation $(a_1, \ldots, a_{n - 1})$ for which there exists some $m$ such that $a_1 < a_2 < \cdots < a_m > a_{m + 1} > \cdots > a_{n - 1}$. We show that the Main Conjecture~\ref{conjecture:main} for the mountain case in Section~\ref{sec:mountain}.
\end{itemize}

Permutations that are both split-layer and mountain permutations must be cliff permutations.

The \textit{greedy decreasing subsequence} of a permutation is the subsequence of elements that are less than all preceding elements in the permutation. Note that the elements of the greedy decreasing subsequence correspond to points which bound a possible maximal rectangle anchored at the origin. We also consider the case where elements of this greedy decreasing subsequence are close together in the permutation.

\begin{itemize}
    \item An \textit{$m$-sparse decreasing} permutation is one where there are fewer than $m$ elements between consecutive members of the greedy decreasing subsequence. We find lower bounds for the area in this case in terms of $m$ in Section~\ref{sec:sparse_decreasing}. For $m \leq 3$, we show that the Main Conjecture~\ref{conjecture:main} holds for $m$-sparse decreasing case.
\end{itemize}

The sparse case allows to prove the Main Conjecture~\ref{conjecture:main} for $n \leq 9$. We do this in Section~\ref{subsection:9_dots}.

\subsection{Examples}

We consider examples with small number of dots.

If there is only one dot, we can fill the entire square by considering it to be a rectangle anchored at the origin.

Suppose we have two dots: the origin and  $(x,y)$. The rectangle at $(x, y)$ has size $(1 - x)(1 - y)$. The other is $x \times 1$ or $1 \times y$. Thus, the maximum area is $(1 - x)(1 - y) + \max\{x, y\}$.

Without loss of generality, $x \geq y$. Then, the area is $1 - y + xy \geq 1 - y + y^2 \geq \frac{3}{4}$. Equality is achieved if and only if $x = y = \frac{1}{2}$.

\section{Final Decreasing Run}
\label{sec:final_decreasing_run}

We call \textit{the final decreasing run} to be the longest consecutive decreasing run that includes $P_{n - 1}$, that is, the sequence of points $P_j$, $P_{j + 1}$, $\ldots$, $P_{n - 1}$, such that their $y$-coordinates form a decreasing sequence, where $j$ is as small as possible.

Define the \textit{staircase region} to be the set of points that are above and to the right of at least one point in the final decreasing run.

\begin{lemma}
\label{lemma:final_staircase}
If the total area covered by the rectangles is maximized, then the entire staircase region is filled.
\end{lemma}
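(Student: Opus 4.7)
The plan is to argue by contradiction: assume the packing is of maximum area yet some portion of the staircase region $T$ is uncovered, and produce a strictly-larger-area packing. The first step is to record the structure of $T$: it decomposes as the disjoint union of rectangles $R_{n-1}=[x_{n-1},1]\times[y_{n-1},1]$ and $R_i=[x_i,x_{i+1}]\times[y_i,1]$ for $j\le i<n-1$. From the orderings $x_1\le\cdots\le x_{n-1}$ and $y_j>y_{j+1}>\cdots>y_{n-1}$, no point $P_k$ with $k<j$ can have $x_k\ge x_j$, and the final-run points sit exactly on the southwest corners of the $R_i$'s; hence no point of $S$ lies in the \emph{interior} of $T$. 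A direct check also shows that $[x_i,1]\times[y_i,1]\subseteq T$ for each $i\ge j$, so that every final-run rectangle $r_i$ in fact lives inside $T$.

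The core of the argument is a rectangle swap performed inductively for $i=n-1,n-2,\ldots,j$, maintaining the invariant that $r_l=R_l$ for every $l>i$ already processed. Under this invariant, the only rectangles other than $r_i$ that can cover part of $R_i$ are some $r_m$ with $m<j$ (the rectangles for $l>i$ in the final run sit below $y_i$ by the decreasing property and so do not enter $R_i$). Since $x_m<x_j\le x_i$, such a blocking $r_m$ must enter $R_i$ from the left, so $r_m\cap R_i$ is a sub-rectangle of the form $[x_i,u']\times[y',v_m]$. Now transfer this slice: enlarge $r_i$ to absorb $r_m\cap R_i$ and shrink $r_m$ to $r_m\setminus R_i$. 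Exactly as in the area calculation carried out for the $\pi=(3,1,2)$ configuration in the earlier discussion, the two changes cancel whenever the swapped strip is flush with the boundary of $R_i$, so the swap is area-preserving. Iterating the swap on every blocker consumes all coverage of $R_i$ by foreign rectangles and leaves $r_i=R_i$.

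After processing every index from $n-1$ down to $j$, the staircase $T$ is tiled exactly by $R_j,\ldots,R_{n-1}$, while the total area of the packing equals that of the original (each swap preserved area, and rectangles for $k<j$ were only shrunk to exclude $T$). In the rearranged packing $T$ has zero uncovered area, but in the original, by hypothesis, a positive-area portion of $T$ was uncovered. Consequently the rearranged packing has \emph{strictly} greater total area than the original, contradicting maximality and proving the lemma.

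The main obstacle is the alignment condition that makes each individual swap produce honest rectangles rather than L-shapes: if $r_m\cap R_i$ does not span the full height (or width) of $R_i$, naively cropping $r_m$ leaves a non-rectangular remnant. I resolve this by performing an auxiliary alignment swap before the main swap, pushing $r_m$'s top (or right) edge flush with the top (or left side) of $R_i$ using the rectangles directly above or to the right of $r_m$; this is always possible because the absence of $S$-points in the interior of $T$ means any rearrangement inside $T$ is point-feasible. A cleaner alternative is to induct on $n$: the base step $i=n-1$ is immediate since no $S$-point lies to the right of $P_{n-1}$, forcing $r_{n-1}=R_{n-1}$ by maximality, after which the inductive hypothesis applied to the configuration with $P_{n-1}$ removed and $R_{n-1}$ excised completes the argument.
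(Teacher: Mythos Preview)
Your argument contains a genuine logical inconsistency. You assert that each swap is area-preserving (you enlarge $r_i$ by exactly $|r_m\cap R_i|$ while shrinking $r_m$ by the same amount), and hence that after all swaps the total area equals that of the original packing. You also assert that after the swaps $r_i=R_i$ for every $i$ in the final run, so that $T$ is completely tiled. These two claims are incompatible whenever the original packing leaves part of $T$ uncovered: area-preserving transfers merely redistribute the already-covered portion of $R_i$ into $r_i$, so the post-swap $r_i$ has area equal to the original coverage of $R_i$, not $|R_i|$. You then conclude ``rearranged area $>$ original area'' from ``rearranged covers all of $T$'', immediately after stating the two areas are equal. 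The contradiction you want never materializes.

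The repair is to abandon area-preservation, which is exactly what the paper does: crop every foreign rectangle $r_m$ to $r_m\setminus T$ (your own analysis of how $r_m$ can enter $T$ from the left, with $y_m\ge y_i$, shows this remnant is always a genuine rectangle anchored at $P_m$, so the L-shape worry dissolves), and then simply \emph{set} $r_i:=R_i$ for each final-run index. Coverage outside $T$ is unchanged and coverage inside $T$ becomes full, so the new packing has area at least the old, with strict inequality precisely when $T$ was not already filled. Your proposed inductive alternative has a related gap: maximality of the total area does not by itself force $r_{n-1}=R_{n-1}$, since a foreign rectangle $r_m$ with $y_m\ge y_{n-1}$ may extend over $P_{n-1}$ and block $r_{n-1}$ from above while still belonging to some optimal packing.
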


\begin{proof}
First, note that by splitting the staircase region into vertical strips, we can fill the entire staircase region using rectangles anchored at the points in the final decreasing run, as shown in Figure~\ref{fig:staircase_filled}.

\begin{figure}
    \centering
    \includegraphics[scale = 0.7]{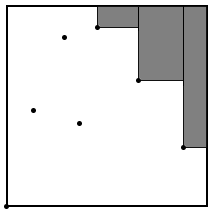}
    \caption{One of many ways to fill the staircase region}
    \label{fig:staircase_filled}
\end{figure}

Now, for any packing where rectangles anchored at points not in the final decreasing run intersect with the staircase region, we can ``cut off" these rectangles where they intersect the staircase region, as shown in Figure~\ref{fig:staircase_cutoff}.

\begin{figure}
    \centering
    \includegraphics[scale = 0.7]{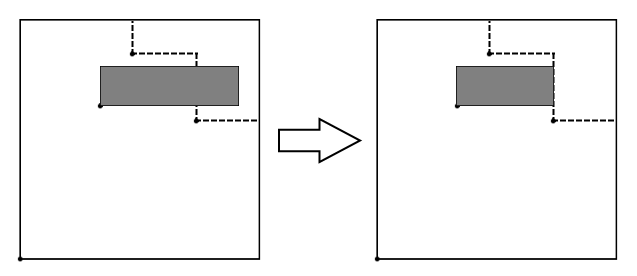}
    \caption{Keeping the part of the rectangle outside the staircase region}
    \label{fig:staircase_cutoff}
\end{figure}

Then, we can fill the staircase region using rectangles anchored at points in the final decreasing run as before. The area outside the staircase region that is covered by rectangles does not change, and the area inside the staircase region covered by rectangles can only increase. Thus, this operation can never decrease the total area, and only increases the total area if the staircase region was not already completely filled.

Therefore, in any packing with maximum area, the entire staircase region is filled.
\end{proof}

\section{Maximal rectangles at the origin}
\label{sec:max_at_origin}

We prove the following statement about maximal rectangles anchored at the origin when the points form a locally minimal point set.

Note that the set of points bounding these maximal rectangles correspond to the greedy decreasing subsequence of the permutation.

\begin{lemma}
\label{lemma:maximal_at_origin}
If the set of points is locally minimal, then the areas of the possible maximal rectangles anchored at the origin are all equal.
\end{lemma}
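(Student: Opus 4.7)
The plan is to argue by contradiction: assume the point set is locally minimal yet two possible maximal rectangles at the origin have distinct areas, and exhibit a small perturbation that strictly decreases the maximum packing area. Label the greedy decreasing subsequence $Q_1, \ldots, Q_k$, sorted by increasing $x$-coordinate (so $y(Q_1) > \cdots > y(Q_k)$), and adopt conventional boundaries $Q_0 := (0, 1)$ and $Q_{k+1} := (1, 0)$. Then the possible maximal rectangles at the origin are $R_i = [0, x(Q_{i+1})] \times [0, y(Q_i)]$ for $i = 0, 1, \ldots, k$, with areas $A_i = x(Q_{i+1}) \cdot y(Q_i)$. In every packing, exactly one $R_i$ is used as the origin rectangle, contributing $A_i$ to the total area.

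Suppose, for contradiction, not all $A_i$ are equal, so there are consecutive indices with $A_i \ne A_{i+1}$; without loss of generality $A_i > A_{i+1}$. I would perturb only the single point $Q_{i+1}$, moving it by $(-\delta, \eta)$ for small $\delta, \eta > 0$. This perturbation leaves $A_j$ unchanged for $j \notin \{i, i+1\}$, decreases $A_i$ by $\delta \cdot y(Q_i)$, and increases $A_{i+1}$ by $\eta \cdot x(Q_{i+2})$. By choosing the ratio $\eta/\delta$ so that both new values $A_i' = A_i - \delta y(Q_i)$ and $A_{i+1}' = A_{i+1} + \eta x(Q_{i+2})$ are strictly less than $A_i$, we obtain a configuration in which every possible maximal rectangle at the origin has strictly smaller area than before, with a uniform first-order gap proportional to $\delta$.

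The main work is converting this strict reduction in $\max_j A_j$ into a strict reduction of the maximum packing area. I would decompose the first-order effect of the perturbation on any candidate optimal packing into three parts: the change in the origin rectangle's area (bounded above by the drop in $\max_j A_j$), the changes to rectangles anchored at points dominated by $Q_{i+1}$ whose right or top edge is pinned by $Q_{i+1}$, and the change to the rectangle anchored at $Q_{i+1}$ itself. By the continuity lemma established earlier, all these changes are $O(\delta + \eta)$. The crucial observation is that moving $Q_{i+1}$ left by $\delta$ shrinks the width of any rectangle right-bounded by $Q_{i+1}$ by exactly $\delta$, while moving it up by $\eta$ enlarges rectangles top-bounded by $Q_{i+1}$ by exactly $\eta$ in height, so these effects have opposite signs and, along at least one of the two sign choices for the perturbation direction consistent with our constraint, they cancel or reinforce the decrease.

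The main obstacle is the case when an optimal packing originally uses $R_m$ with $m \notin \{i, i+1\}$: here the direct $A_m$ is untouched, and one must argue the perturbation still strictly hurts the total area. The resolution I expect is to consider two perturbation directions — one with $\delta$ dominant and one with $\eta$ dominant, both still producing $\max(A_i', A_{i+1}') < A_i$ — and observe that the first-order sensitivities of the non-origin rectangles to these two directions have opposite signs. Hence the overall first-order change in packing area is strictly negative in at least one of them, contradicting local minimality. The delicate bookkeeping required to verify this sign analysis across all possible optimal packings is where most of the effort will go.
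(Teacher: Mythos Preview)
Your proposal is substantially more complicated than necessary and leaves a genuine gap at exactly the point you flag as the ``main obstacle''. Two observations collapse the argument to a few lines.

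First, the origin rectangle is independent of every other rectangle (every other point dominates the origin), so any packing of maximum area uses an origin rectangle of area exactly $\max_j A_j$. Second, because $Q_{i+1}$ lies in the greedy decreasing subsequence, \emph{no non-origin point lies below-left of it}. Hence the only rectangles that can be bounded on the right or on top by $Q_{i+1}$ are origin rectangles. Your concern about ``rectangles anchored at points dominated by $Q_{i+1}$ whose right or top edge is pinned by $Q_{i+1}$'' is therefore vacuous: the only such anchor is the origin itself.

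With these in hand, the paper perturbs in a \emph{single} direction. If $A_i > A_{i+1}$, move $Q_{i+1}$ straight up by a small $\eta>0$. Then $A_i$ is unchanged, $A_{i+1}$ increases but stays below $A_i$, and every other $A_j$ is untouched, so $\max_j A_j$ is \emph{exactly the same} as before. By the second observation, no non-origin rectangle other than the one anchored at $Q_{i+1}$ is affected, and that rectangle strictly loses height. Thus the maximum packing area strictly drops, contradicting local minimality. (Symmetrically, if $A_i<A_{i+1}$, move $Q_{i+1}$ to the right.)

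Your two-directional perturbation $(-\delta,\eta)$ forces you into the case you yourself identify as problematic: when $\max_j A_j$ is achieved at some $m\notin\{i,i+1\}$, your move does nothing to the origin contribution, and your claim that ``every possible maximal rectangle at the origin has strictly smaller area than before'' is simply false, since $A_m$ is unchanged. The proposed fix via two perturbation directions with opposite first-order signs is not worked out and is unnecessary; the whole difficulty evaporates once you choose a perturbation that keeps $\max_j A_j$ fixed rather than trying to decrease it.
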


\begin{proof}
Note that the rectangle anchored at the origin is independent of the rectangles anchored at any other point, so any packing with maximal area must include the largest rectangle anchored at the origin. This largest rectangle must be a maximal rectangle.

Let there be $\ell$ distinct maximal rectangles anchored at the origin, and let these rectangles be $M_1$, $M_2$, $\ldots$, $M_\ell$, in order of increasing width. Note that rectangle $M_i$ is bounded on the right and $M_{i + 1}$ is bounded above by the same point. Let this point be $Q_i$.

Suppose that $A(M_i)$, the area of $M_i$, is greater than $A(M_{i + 1})$.  If we move $Q_i$ upwards by a sufficiently small amount, such that $A(M_{i + 1})$ is still less than $A(M_i)$, which remains unchanged, then we will not affect the value of $\max\{A(M_1), \ldots , A(M_\ell)\}$. However, it will decrease the area of any rectangle anchored at $Q_i$, and thus the area of the maximal packing, as shown in Figure~\ref{fig:moving_points}. This contradicts the set of points being a local minimum.

\begin{figure}[htb!]
    \centering
    \includegraphics[scale=0.7]{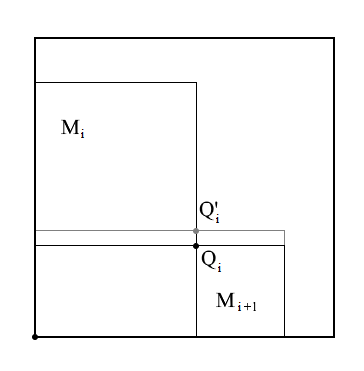}
    \caption{Moving $Q_i$ upwards}
    \label{fig:moving_points}
\end{figure}

Similarly, if $A(M_i)$ is less than $A(M_{i + 1})$, then we can move $Q_i$ rightwards by a small amount, so that $A(M_i)$ is still less than $A(M_{i + 1})$. As before, this does not change the value of $\max\{A(M_1), A(M_2), \ldots , A(M_\ell)\}$ while reducing the area of the rectangle anchored at $Q_i$, again contradicting the set of points being a local minimum.

Thus, we must have $A(M_i) = A(M_{i + 1})$, so $A(M_1) = A(M_2) = \cdots = A(M_\ell)$.
\end{proof}

\section{Presorted case}
\label{sec:presorted}

Consider a presorted permutation $(1, 2, \ldots, m, ...)$, where we can have any arrangement of the elements after $m$.

The all increasing case is a special case of the presorted case, when $m = n - 1$. Also, cliff permutations are also a special case, when the permutation is $(1, 2, \ldots, m, n - 1, n - 2, \ldots, m + 1)$.

In the presorted case, all permutations start with 1. We start by finding a lower bound on the maximum area for any permutation that starts with 1.

\begin{lemma}
\label{lemma:start1}
If the points follow a permutation that starts with $1$, and we can fill at least a proportion $k$ of the area above and to the right of $P_1$, then we can fill at least $\frac{4k - 1}{4k}$ of the square if $k \geq \frac{1}{2}$ and we can fill at least $k$ of the square if $k \leq \frac{1}{2}$. If $k \geq \frac{1}{2}$, then this bound is tight when $P_1 = \left(\frac{2k - 1}{2k}, \frac{2k - 1}{2k}\right)$, and if $k \leq \frac{1}{2}$, then this bound is tight when $P_1 = (0, 0)$.
\end{lemma}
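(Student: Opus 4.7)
The plan is to exploit that a permutation starting with $1$ forces $P_1$ to be simultaneously the leftmost and the lowest of the non-origin points. Concretely, because we number left-to-right we have $x_1 \le x_j$ for all $j \ge 1$, and $\pi(1)=1$ gives $y_1 \le y_j$ for all $j \ge 1$. Consequently, $P_1$ alone bounds the two extremal maximal rectangles at the origin: the wide one $[0,1]\times[0,y_1]$ and the tall one $[0,x_1]\times[0,1]$, whose areas are $y_1$ and $x_1$. In particular, we may always use a single origin rectangle of area $\max(x_1,y_1)$.

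Next I would observe that every non-origin point lies in the closed subrectangle $R=[x_1,1]\times[y_1,1]$, and that any anchored rectangle based at such a point stays inside $R$. Thus, $R$ has interior disjoint from either choice of origin rectangle above. Using Lemma~\ref{lemma:scaling} on the hypothesis that a proportion $k$ of the area above and to the right of $P_1$ can be packed, I can fit an anchored packing of $R$ with area at least $k(1-x_1)(1-y_1)$. Adding this to the origin rectangle gives the clean lower bound
\[ \max(x_1,y_1) \;+\; k(1-x_1)(1-y_1). \]

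The remainder is a two-variable optimization. By symmetry I may assume $x_1 \ge y_1$, so the expression becomes $x_1 + k(1-x_1)(1-y_1)$, which is weakly decreasing in $y_1$ for fixed $x_1$ (since $1-x_1 \ge 0$), so it suffices to take $y_1=x_1=t$ and minimize $f(t)=t+k(1-t)^2$ on $[0,1]$. Computing $f'(t)=1-2k(1-t)$ identifies two regimes: if $k \ge \tfrac12$, the unique interior critical point is $t^* = (2k-1)/(2k) \in [0,1]$, where $f(t^*) = (4k-1)/(4k)$; if $k \le \tfrac12$, then $f'(t) \ge 1-2k \ge 0$ throughout, so $f$ is minimized at $t=0$ with value $k$. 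The tightness claims read off directly: the extremal $P_1$ is $((2k-1)/(2k),(2k-1)/(2k))$ in the first case and the limiting $(0,0)$ in the second.

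I do not anticipate a genuine obstacle here. The only delicate point is the step that a packing of the subrectangle $R$ combined with an origin rectangle gives a legitimate anchored packing of the whole square — this relies on two disjointness facts (the origin rectangle is interior-disjoint from $R$, and all non-origin rectangles stay in $R$), both of which follow from $P_1$ being minimal in both coordinates. Everything past that is a one-variable calculus exercise producing the stated piecewise bound and its tight configurations.
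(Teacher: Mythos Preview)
Your proposal is correct and follows essentially the same approach as the paper: both reduce to minimizing $t + k(1-t)^2$ over $t \in [0,1]$ and split on whether $k \gtrless \tfrac{1}{2}$. The only cosmetic difference is that the paper invokes Lemma~\ref{lemma:maximal_at_origin} (equal-area maximal rectangles at a local minimum) to conclude $x_1 = y_1$ immediately, whereas you carry out the two-variable minimization of $\max(x_1,y_1) + k(1-x_1)(1-y_1)$ directly and observe it is attained on the diagonal; your version is slightly more self-contained but otherwise identical.
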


\begin{proof}
Assume the maximum area we can fill is minimized. Then, the area of both maximal rectangles anchored at the origin must be equal. Thus, $P_1$ is on the line $y = x$. If $P_1 = (x_1, x_1)$, then the maximal area we can fill is $x_1 + (1 - x_1)^2k$. We see that the derivative of this expression is 0 at $x_1 = \frac{2k - 1}{2k}$. If $k \geq \frac{1}{2}$, then we plug in $x_1 = \frac{2k - 1}{2k}$ to get a minimal area of $1 - \frac{1}{4k}$. Otherwise, the area is minimized at $x_1 = 0$, which gives a minimum area of $k$.
\end{proof}

It follows that if $k \geq \frac{1}{2}$, then we can fill at least $\frac{1}{2}$ of the area. Now, we use induction to extend this result to permutations that start with $(1, 2, \ldots, m)$.

\begin{theorem}
\label{theorem:increasing_start}
If the points follow a permutation that starts with $(1, 2, \ldots , m)$, and we can fill at least a proportion $k$ of the area above and to the right of $P_m$, then we can fill at least \[\frac{(2m + 2)k - m}{4mk - (2m - 2)}\] of the whole square when $k \geq \frac{1}{2}$, and $k$ of the whole square when $k \leq \frac{1}{2}$. These bounds are tight when \[P_i = \left(\frac{2k - 1}{2mk - m + 1} \cdot i, \frac{2k - 1}{2mk - m + 1} \cdot i\right)\] for $1 \leq i \leq m$ when $k \geq \frac{1}{2}$, and $P_i = (0, 0)$ for $1 \leq i \leq m$ when $k \leq \frac{1}{2}$.
\end{theorem}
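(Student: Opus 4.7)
The plan is to prove this by induction on $m$, with Lemma~\ref{lemma:start1} serving as both the base case and the engine of the inductive step. The key structural observation is that when a permutation starts with $(1, 2, \ldots, m)$, the point $P_1$ is dominated by every other non-origin point (since $\pi(1)=1$ is the smallest $y$-index and $x_1$ is the smallest $x$-coordinate). Consequently, after restricting to the rectangle $[x_1,1]\times[y_1,1]$ and rescaling via Lemma~\ref{lemma:scaling}, the points $P_2,\ldots,P_{n-1}$ form a configuration whose permutation begins with $(1, 2, \ldots, m-1)$, and the subrectangle above $P_m$ is the same in both the original and rescaled problems, so its fillable proportion is still $k$.

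For the base case $m=1$, the statement is exactly Lemma~\ref{lemma:start1}. For the inductive step, let $f_m(k)$ denote the claimed lower bound $\frac{(2m+2)k-m}{4mk-(2m-2)}$ (for $k\geq 1/2$). Assuming the theorem holds for $m-1$, the inductive hypothesis applied to the sub-square above $P_1$ gives fillable proportion at least $f_{m-1}(k)=\frac{2mk-(m-1)}{(4m-4)k-(2m-4)}$. Then Lemma~\ref{lemma:start1}, applied to the original square whose permutation starts with $1$ and whose fillable proportion above $P_1$ is at least $f_{m-1}(k)$, yields total fillable proportion at least $f_1(f_{m-1}(k)) = 1-\frac{1}{4 f_{m-1}(k)}$. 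The main verification is algebraic: plugging in the inductive formula and simplifying the numerator
\[ 8mk - 4(m-1) - \bigl((4m-4)k - (2m-4)\bigr) = (4m+4)k - 2m \]
against the denominator $8mk-4(m-1)$ gives exactly $f_m(k)$. The regime $k\leq 1/2$ handles itself: induction yields $f_{m-1}(k)=k\leq 1/2$, and the second branch of Lemma~\ref{lemma:start1} returns $k$, so $f_m(k)=k$.

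For tightness, I would propagate the tight configurations of Lemma~\ref{lemma:start1} through the recursion. At the top level, tightness of Lemma~\ref{lemma:start1} forces $P_1 = (x_1, x_1)$ with $x_1 = \frac{2f_{m-1}(k)-1}{2f_{m-1}(k)}$, and the remaining points must realize the inductively tight configuration inside the sub-square of side $s = 1 - x_1 = \frac{1}{2f_{m-1}(k)}$. Translating the inductive positions back by $P_i = P_1 + s \cdot P'_{i-1}$ and simplifying the algebra with the closed form for $f_{m-1}(k)$ should show $x_1 = \frac{2k-1}{2mk-m+1}$ and, more generally, $P_i = \frac{2k-1}{2mk-m+1} \cdot i$ on the diagonal, matching the stated tight configuration. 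The $k\leq 1/2$ tightness collapses all $P_i$ toward the origin, consistent with the degenerate case of Lemma~\ref{lemma:start1}.

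The only real obstacle is bookkeeping: the composition $f_1\circ f_{m-1}$ must telescope cleanly to $f_m$, and the nested rescalings for the tight configuration must reduce to a clean arithmetic progression on the diagonal. Both are routine but require careful tracking of the denominators. No new geometric idea beyond Lemma~\ref{lemma:start1} and the independence of the packing above $P_1$ from the rest is needed.
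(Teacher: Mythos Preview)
Your proposal is correct and follows essentially the same approach as the paper: induction on $m$ with Lemma~\ref{lemma:start1} as both the base case and the inductive step, computing $f_1(f_{m-1}(k))$ and verifying it simplifies to $f_m(k)$, with the tight configurations propagated recursively. The only minor point you gloss over (as does the paper) is the one-line check that $f_{m-1}(k)\ge \tfrac{1}{2}$ whenever $k\ge \tfrac{1}{2}$, so that the first branch of Lemma~\ref{lemma:start1} applies; this follows immediately from the formula since the numerator minus half the denominator equals $1$.
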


\begin{proof}
We prove this by induction on $m$. Note that the base case of $m = 1$ is equivalent to Lemma~\ref{lemma:start1}.

Now, for $m > 1$, note that if we restrict ourselves to the area above and to the right of $P_1$, then we get a configuration equivalent to the case of a permutation starting with $(1, 2, \ldots, m - 1)$.

We now have 2 cases, depending on whether or not $k$ is at least $\frac{1}{2}$.

\paragraph{Case 1: $k \geq \frac{1}{2}$ \\}

By the inductive hypothesis, we can fill at least \[\frac{2mk - (m - 1)}{(4m - 4)k - (2m - 4)}\] of the area above and to the right of $P_1$. If $k$ is at least $\frac{1}{2}$, then this fraction is also $\frac{1}{2}$.Then, by Lemma~\ref{lemma:start1}, we can fill at least \[1 - \frac{1}{4 \cdot \frac{2mk - (m - 1)}{(4m - 4)k - (2m - 4)}} = \frac{(2m + 2)k - m}{4mk - (2m - 2)}\] of the square, with equality if and only if $P_1 = \left(\frac{2k - 1}{2mk - m + 1}, \frac{2k - 1}{2mk - m + 1}\right)$.

Let $R$ be the rectangular region of the square that is above and to the right of $P_1$. We know from the inductive hypothesis that the bound for for $m - 1$ increasing points in $R$ is tight when $P_{i + 1}$ lies on the main diagonal of $R$, $\frac{2k - 1}{2(m - 1)k - (m - 1) + 1} \cdot i$ of the way from the bottom left to the upper right corner, for $1 \leq i \leq m - 1$.

As the lower left corner of $R$ is $P_1$, we know that a point that is a proportion $p$ of the way from the bottom left to the upper right corner of $R$ has both coordinates equal to $\frac{2k - 1}{2mk - m + 1} + p\left(1 - \frac{2k - 1}{2mk - m + 1}\right)$. Plugging in $p = \frac{2k - 1}{2(m - 1)k - (m - 1) + 1} \cdot i$, we get $P_{i + 1} = \left(\frac{2k - 1}{2mk - m + 1} \cdot (i + 1), \frac{2k - 1}{2mk - m + 1} \cdot (i + 1)\right)$ for $1 \leq i \leq m - 1$, completing the inductive step.
\paragraph{Case 2: $k \leq \frac{1}{2}$ \\}

By the inductive hypothesis, we can fill at least $k$ of the area above and to the right of $P_1$, with equality if and only if $P_2$, $P_3$, $\ldots$, $P_m$ are the same point as $P_1$. Since $k \leq \frac{1}{2}$, we have by Lemma~\ref{lemma:start1} that we can fill at least $k$ of the whole square, and this bound is tight $P_1 = (0, 0)$. However, this implies that $P_i = (0, 0)$ for all $1 \leq i \leq m$, completing the inductive step.
\end{proof}

\subsection{All increasing case}
\label{subsection:increasing}
An important special case of Theorem~\ref{theorem:increasing_start} is the all increasing case. It is conjectured that this increasing case provides the smallest area, among all possible permutations of $(1, 2, \ldots, n - 1)$.

\begin{corollary}
\label{theorem:increasing_case}
If the dots are in increasing order, then we can always fill at least $\frac{1}{2} + \frac{1}{2n}$ of the square. This bound is tight if and only if the points are $\left(\frac{i}{n}, \frac{i}{n}\right)$, for $i = 0, 1, \dots , n - 1$.
\end{corollary}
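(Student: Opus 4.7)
The plan is to deduce the corollary as a direct specialization of Theorem~\ref{theorem:increasing_start} by taking $m = n - 1$ and $k = 1$. The permutation in the all-increasing case is exactly $(1, 2, \ldots, n - 1)$, so it trivially starts with $(1, 2, \ldots, m)$ for $m = n - 1$, and there are no points strictly above and to the right of $P_{n-1}$. Hence the region above and to the right of $P_{n-1}$ contains no points of $S$ other than $P_{n-1}$ itself, and the single maximal rectangle anchored at $P_{n-1}$ with upper right corner $(1,1)$ fills it entirely; thus we may take $k = 1 \geq \tfrac{1}{2}$.

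Next, plug $m = n-1$ and $k = 1$ into the bound from Theorem~\ref{theorem:increasing_start}. The numerator becomes $(2n)\cdot 1 - (n - 1) = n + 1$ and the denominator becomes $4(n-1) - (2n - 4) = 2n$, so the bound simplifies to
\[
\frac{(2m+2)k - m}{4mk - (2m - 2)} \;=\; \frac{n+1}{2n} \;=\; \frac{1}{2} + \frac{1}{2n},
\]
which is exactly the claimed lower bound.

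For the equality case, I would similarly substitute $m = n - 1$ and $k = 1$ into the tightness formula
\[
P_i = \left(\frac{2k - 1}{2mk - m + 1}\cdot i,\ \frac{2k-1}{2mk-m+1}\cdot i\right).
\]
The coefficient simplifies as $\frac{1}{(2(n-1) - (n-1) + 1)} = \frac{1}{n}$, giving $P_i = \left(\frac{i}{n}, \frac{i}{n}\right)$ for $1 \leq i \leq n - 1$. Together with $P_0 = (0, 0)$, this matches the stated extremal configuration.

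There is no real obstacle here beyond the routine arithmetic: the corollary is a one-line specialization of the induction already carried out in Theorem~\ref{theorem:increasing_start}. The only subtlety worth noting is the justification that $k = 1$ is attainable (rather than merely a formal upper bound on a filling fraction), but this is immediate because the region above and to the right of the topmost-rightmost point is an axis-aligned rectangle with no interior points of $S$, and a single maximal rectangle anchored at $P_{n-1}$ fills it completely.
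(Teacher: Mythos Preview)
Your proposal is correct and follows essentially the same approach as the paper: both specialize Theorem~\ref{theorem:increasing_start} with $m = n-1$ and $k = 1$, justify $k = 1$ by noting that the rectangle above and to the right of $P_{n-1}$ can be filled completely, and then substitute into the bound and the tightness formula to obtain $\frac{1}{2} + \frac{1}{2n}$ and $P_i = \left(\frac{i}{n}, \frac{i}{n}\right)$.
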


\begin{proof}
In the increasing case, we have $n - 1$ increasing points other than the origin. Also, as $P_{n - 1}$ is the uppermost and rightmost point, we can fill all the area above and to the right of it. Thus, by Theorem~\ref{theorem:increasing_start} with $m = n - 1$ and $k = 1$, we get that we can fill $\frac{n + 1}{2n} = \frac{1}{2} + \frac{1}{2n}$ of the whole square, with a tight bound if and only if $P_i = \left(\frac{i}{n}, \frac{i}{n}\right)$.
\end{proof}

Note that if we plug $k = \frac{1}{2} + \frac{1}{2n}$ into Theorem~\ref{theorem:increasing_start}, corresponding to the increasing case with $n$ points, we get $\frac{1}{2} + \frac{1}{2(m + n)}$, which is the minimum area of the increasing case with $m + n$ points, as expected.

\section{All decreasing case}
\label{sec:decreasing}

We can compute the smallest possible maximal area for a configuration with $n - 1$ decreasing points.

\begin{theorem}
\label{theorem:decreasing_case}
The minimal possible maximum area for a configuration with $n - 1$ decreasing points is $1 - \left(1 - \frac{1}{n}\right)^n$, and this bound is tight if and only if the $n - 1$ points are $\left(\left(1 - \frac{1}{n}\right)^{n - i}, \left(1 - \frac{1}{n}\right)^i\right)$ for $i = 1, 2, \ldots, n - 1$.
\end{theorem}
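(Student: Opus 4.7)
The plan is to combine the two structural results already proved and reduce to a single constrained optimization. Since the $n-1$ decreasing points form the entire final decreasing run, Lemma~\ref{lemma:final_staircase} forces $r_1,\ldots,r_{n-1}$ to tile the staircase region $S$ exactly in any maximum-area packing, while $r_0$ is a maximal rectangle anchored at the origin and sits inside the complement $R = [0,1]^2 \setminus S$. With $\alpha = A(r_0)$ and the conventions $x_0 = 0$, $x_n = 1$, $y_0 = 1$, the total packing area equals $\alpha + (1 - |R|)$, where $|R| = \sum_{i=0}^{n-1}(x_{i+1}-x_i)\,y_i$, giving an explicit formula to minimize over decreasing configurations.

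Since the maximum packing area is continuous on the compact closure of the decreasing configuration space, a minimizer exists; at any non-degenerate minimizer Lemma~\ref{lemma:maximal_at_origin} applies to the maximal rectangles $M_i = [0, x_i] \times [0, y_{i-1}]$ at the origin and forces $x_{i+1} y_i = \alpha$ for all $0 \leq i \leq n-1$. Parameterizing the remaining degrees of freedom by the ratios $t_i = y_i/y_{i-1} \in (0,1)$ for $1 \leq i \leq n-1$, one gets $\alpha = \prod_i t_i$, and combining $\sum_{i=0}^{n-1} x_{i+1} y_i = n\alpha$ with $\sum_{i=1}^{n-1} x_i y_i = \alpha \sum_i t_i$ (since $x_i y_i = (x_i y_{i-1})(y_i/y_{i-1}) = \alpha t_i$) telescopes the total packing area to
\[
  \alpha + (1-|R|) \;=\; 1 - \Bigl(\prod_{i=1}^{n-1} t_i\Bigr)\Bigl(n-1 - \sum_{i=1}^{n-1} t_i\Bigr).
\]

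Minimizing this expression over $t \in [0,1]^{n-1}$ thus amounts to maximizing $g(t) = \bigl(\prod_i t_i\bigr)\bigl(n-1-\sum_i t_i\bigr)$. Since $\partial g/\partial t_j = \bigl(\prod_{i \neq j} t_i\bigr)\bigl[(n-1-\sum_i t_i) - t_j\bigr]$, any interior critical point has all $t_j$ equal, forcing $t_j = 1-1/n$ and $g = (1-1/n)^{n-1} \cdot (n-1)/n = (1-1/n)^n$. Each face $t_k = 1$ reduces to the analogous problem in $n-2$ variables whose maximum is $(1-1/(n-1))^{n-1} < (1-1/n)^n$ (by the usual monotonicity of $(1-1/k)^k$), and each face $t_k = 0$ has $g \equiv 0$; hence this interior critical point is the global maximum. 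Back-substituting yields the claimed minimum area $1 - (1-1/n)^n$ and the unique extremal configuration $y_i = (1-1/n)^i$, $x_i = \alpha/y_{i-1} = (1-1/n)^{n-i}$.

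The main obstacle is the reduction to a non-degenerate, locally minimal minimizer: one must rule out minimizers on the boundary of the configuration space where points coincide or lie on an axis. Such limits correspond to $t_i \to 0$ or $t_i \to 1$ and are handled by the boundary analysis of $g$ above, since they inductively produce the minimum for fewer effective points, which is strictly larger by the same monotonicity argument.
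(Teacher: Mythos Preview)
Your proof is correct and follows essentially the same route as the paper: both invoke Lemma~\ref{lemma:maximal_at_origin} to obtain the equal-area constraint, arrive at the packing area $1-(\prod t_i)\bigl(n-1-\sum t_i\bigr)$ in the consecutive ratios (which the paper writes as $x_i/x_{i+1}$ and $x_{n-1}$), and then optimize. The only cosmetic difference is that the paper applies AM--GM to collapse to a one-variable problem while you locate the critical point of the symmetric function $g(t)=(\prod t_i)(n-1-\sum t_i)$ directly and handle the boundary faces by induction, making the treatment of degenerate configurations a bit more explicit than the paper's.
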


\begin{proof}
Consider the set of points in the decreasing case with the smallest possible maximum area. This set of points will clearly be a local minimum, so by Lemma~\ref{lemma:maximal_at_origin}, the areas of the maximal rectangles anchored at the origin must all be equal. Evaluating the areas of these rectangles, we get that
\[x_1 = x_2y_1 = x_3y_2 = \dots = x_{n - 1}y_{n - 2} = y_{n - 1}.\]

This means that $y_1 = \frac{x_1}{x_2}$, $y_2 = \frac{x_1}{x_3}$, ... , $y_{n - 2} = \frac{x_1}{x_{n - 1}}$, and $y_{n - 1} = x_1$.

The area of the maximal rectangle anchored at the origin is equal to $x_1$. By splitting the area of the staircase region into vertical strips, we get that its area is equal to $(1 - y_1)(x_2 - x_1) + (1 - y_2)(x_3 - x_2) + \dots + (1 - y_{n - 2})(x_{n - 1} - x_{n - 2}) + (1 - y_{n - 1})(1 - x_{n - 1})$.

Substituting our values of $y_k$ and expanding, we find that the area of the staircase is $1 - nx_1 + x_1\left(\frac{x_1}{x_2} +\frac{x_2}{x_3} + \dots + \frac{x_{n-2}}{x_{n - 1}} + x_{n - 1}\right)$, so the total area is 
\[1 - (n - 1)x_1 + x_1\left(\frac{x_1}{x_2} +\frac{x_2}{x_3} + \dots + \frac{x_{n-2}}{x_{n - 1}} + x_{n - 1}\right).\]

Next, if the value of $x_1$ is fixed, then we aim to minimize $\frac{x_1}{x_2} + \frac{x_2}{x_3} + \dots + \frac{x_{n-2}}{x_{n - 1}} + x_{n - 1}$. By the AM-GM inequality, we have that
\[\frac{x_1}{x_2} + \frac{x_2}{x_3} + \dots + \frac{x_{n-2}}{x_{n - 1}} + x_{n - 1} \geq (n - 1)\sqrt[n - 1]{\frac{x_1}{x_2} \cdot \frac{x_2}{x_3} \cdot \dots \cdot \frac{x_{n-2}}{x_{n - 1}} \cdot x_{n - 1}}\]
\[ = (n - 1)\sqrt[n - 1]{x_1},\]
with equality if and only if every term in the sum is equal to $x_{n - 1}$.

If these terms are all equal to $x_{n - 1}$, then we must have $x_i = x_{n - 1}^{n - i}$. Substituting this, we have that the total area is at least
\[1 - (n - 1)x_{n - 1}^{n - 1} + x_{n - 1}^{n - 1}(n - 1)x_{n - 1} = 1 + (n - 1)\left(x_{n - 1}^n - x_{n - 1}^{n - 1}\right).\]

The derivative of this expression goes from negative to positive at $x_{n - 1} = 1 - \frac{1}{n}$, so it is minimized at this value of $x_{n - 1}$.

Plugging this in to our expression for the area, we see that the minimum is $1 - \left(1 - \frac{1}{n}\right)^n$, which was to be shown.

Next, if we plug $x_i = \left(1 - \frac{1}{n}\right)^{n - i}$ into our expressions for $y_i$, we see that $y_i = \left(1 - \frac{1}{n}\right)^i$, meaning that our $n - 1$ points are indeed $\left(\left(1 - \frac{1}{n}\right)^{n - i}, \left(1 - \frac{1}{n}\right)^i\right)$ for $i = 1, 2, \ldots, n - 1$.
\end{proof}

For example, if we have $n = 3$, the area is minimized when $P_1 = \left(\frac{4}{9}, \frac{2}{3}\right)$ and $P_2 = \left(\frac{2}{3}, \frac{4}{9}\right)$. This minimum area is $\frac{19}{27}$.

Note that the points in the configuration lie on the hyperbola $xy = (1 - \frac{1}{n})^n$, as shown in Figure~\ref{fig:decreasing_hyperbola}.

\begin{figure}[htb!]
    \centering
    \includegraphics{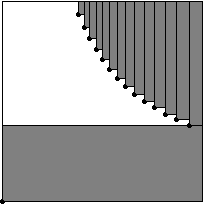}
    \caption{The configuration for $n = 15$}
    \label{fig:decreasing_hyperbola}
\end{figure}

\begin{corollary}
The minimum area in the decreasing case is more than the minimum area in the increasing case.
\end{corollary}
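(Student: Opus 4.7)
The approach is to simply substitute the explicit formulas already established. By Corollary~\ref{theorem:increasing_case} the minimum area in the increasing case is $\frac{1}{2} + \frac{1}{2n} = \frac{n+1}{2n}$, and by Theorem~\ref{theorem:decreasing_case} the minimum area in the decreasing case is $1 - (1 - 1/n)^n$. The corollary therefore reduces to the elementary inequality
$$1 - \left(1 - \frac{1}{n}\right)^{\!n} \;>\; \frac{n+1}{2n},$$
which I would establish for $n \geq 3$; for $n \leq 2$ the increasing and decreasing configurations coincide, so there is nothing to prove.

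I would rearrange to remove the leading $1$: the inequality is equivalent to $(1 - 1/n)^n < (n-1)/(2n)$. Pulling out a factor of $\frac{n-1}{n}$ from the left side and dividing, this simplifies further to
$$\left(1 - \frac{1}{n}\right)^{\!n-1} < \frac{1}{2}, \qquad \text{equivalently} \qquad \left(\frac{n}{n-1}\right)^{\!n-1} > 2.$$
This rearrangement is essentially the only real step; its purpose is to drop the exponent from $n$ down to $n-1$ so that the critical small case $n = 3$ yields a comfortable margin rather than a borderline one.

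To finish, I would invoke the classical fact that the sequence $\left(1 + \frac{1}{m}\right)^m$ is strictly increasing in $m \geq 1$ (with limit $e$). Setting $m = n-1 \geq 2$ gives
$$\left(1 + \frac{1}{n-1}\right)^{\!n-1} \;\geq\; \left(1 + \frac{1}{2}\right)^{\!2} \;=\; \frac{9}{4} \;>\; 2,$$
completing the proof. There is no genuine obstacle here; the entire argument is one algebraic manipulation followed by a standard monotonicity fact, and the only thing to watch is that the strict inequality $9/4 > 2$ at $n = 3$ propagates to all larger $n$ via the monotonicity.
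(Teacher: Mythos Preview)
Your proof is correct and follows essentially the same route as the paper: both rearrange the inequality to $(1-1/n)^{n-1} \le 1/2$ and then invoke monotonicity. The paper states this as the well-known fact that $(1-1/n)^{n-1}$ is decreasing (with base case $n=2$), while you use the equivalent reciprocal form that $(1+1/m)^m$ is increasing (with base case $m=2$, i.e.\ $n=3$, after noting the cases $n\le 2$ are degenerate); these are the same classical result stated two ways.
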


\begin{proof}
This statement is equivalent to $1 - \left(1 - \frac{1}{n}\right)^n \geq \frac{1}{2} + \frac{1}{2n}$. Note that it is true for $n = 1$, so assume $n \geq 2$.

Consider the inequality
\[\frac{1}{2} \geq \left(1 - \frac{1}{n}\right)^{n - 1}.\]

It holds true for $n = 2$, and it is well known that $\left(1 - \frac{1}{n}\right)^{n - 1}$ is a decreasing sequence, so this inequality is true for all $n \geq 2$.

Multiplying both sides by $\left(1 - \frac{1}{n}\right)$, we see that
\[\frac{1}{2}\left(1 - \frac{1}{n}\right) = \frac{1}{2} - \frac{1}{2n} \geq \left(1 - \frac{1}{n}\right)^n.\]

Rearranging, we get the desired $1 - \left(1 - \frac{1}{n}\right)^n \geq \frac{1}{2} + \frac{1}{2n}$.
\end{proof}

\subsection{Cliff case}
\label{subsection:cliff}
We can combine the result for decreasing permutations with Theorem~\ref{theorem:increasing_start} to find the minimum area in the cliff case by plugging in $m$ as the number of increasing points and $k$ as the minimum area for the decreasing points.

\begin{corollary}
\label{corollary:cliff_case}
In the cliff case, if the permutation is $(1, 2, \ldots, m, n - 1, n - 2, n - 3, \ldots, m + 1)$, the minimum area is
\[\frac{(2m + 2)k - m}{4mk - (2m - 2)},\] where $k = 1 - \left(1 - \frac{1}{n - m}\right)^{n - m}$.
\end{corollary}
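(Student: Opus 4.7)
The plan is to combine the two main results already established: the presorted bound of Theorem~\ref{theorem:increasing_start} for the initial increasing segment $P_1,\ldots,P_m$, and the decreasing bound of Theorem~\ref{theorem:decreasing_case} applied to the ``cliff'' $P_{m+1},\ldots,P_{n-1}$. The cliff permutation decomposes naturally: the last $n-1-m$ points all have $y$-coordinates exceeding $y_m$, so they all lie in the rectangular region $R$ above and to the right of $P_m$, and within $R$ their values form a decreasing sequence.

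First I would treat $R$ as its own packing problem. With $P_m$ playing the role of the origin of $R$, the points inside $R$ form a decreasing configuration of $n-1-m$ points beyond the corner, so Theorem~\ref{theorem:decreasing_case}, applied with parameter $n' = n-m$, says the best fill proportion of $R$ equals $k = 1 - \bigl(1 - \tfrac{1}{n-m}\bigr)^{n-m}$, and this is achieved by the explicit hyperbolic configuration given there. The scaling Lemma~\ref{lemma:scaling} is what licenses transporting a unit-square result into the general rectangle $R$.

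Second I would plug this $k$ into Theorem~\ref{theorem:increasing_start} with parameter $m$. Since $(1-1/n')^{n'} \le 1/e$ for $n' \ge 1$, we have $k \ge 1 - 1/e > \tfrac{1}{2}$, so we are in the Case 1 branch of that theorem, yielding the formula
\[
\frac{(2m+2)k - m}{4mk - (2m-2)}
\]
as the minimum fillable fraction of the unit square. The theorem also specifies the corresponding optimal positions of $P_1,\ldots,P_m$ on the diagonal.

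The step requiring the most care is checking that the two bounds are simultaneously tight, so that we actually obtain an extremal cliff configuration rather than merely an upper estimate for the minimum. This amounts to noting that Theorem~\ref{theorem:increasing_start}'s optimum only constrains $P_1,\ldots,P_m$ and the fraction $k$ achievable in $R$, while Theorem~\ref{theorem:decreasing_case}'s optimum lives entirely inside $R$ and does not interact with where $P_1,\ldots,P_m$ are placed on the diagonal. Hence combining the two extremal placements produces a single valid cliff configuration meeting the stated bound, finishing the proof.
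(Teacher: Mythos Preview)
Your proposal is correct and follows essentially the same approach as the paper: apply Theorem~\ref{theorem:decreasing_case} to the region above and to the right of $P_m$ to get $k$, then feed this into Theorem~\ref{theorem:increasing_start}. You are in fact more careful than the paper's short proof, since you explicitly verify $k>\tfrac{1}{2}$ (needed to be in the correct branch of Theorem~\ref{theorem:increasing_start}) and argue that the two extremal configurations can be realized simultaneously to establish tightness.
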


\begin{proof}
The area above and to the right of $P_m$ is equivalent to the decreasing case with $n - m$ points, so by Theorem~\ref{theorem:decreasing_case}, we can fill at least $1 - \left(1 - \frac{1}{n - m}\right)^{n - m} = k$ of this area.

Then, since this configuration starts with $m$ increasing points, we have by Theorem~\ref{theorem:increasing_start} that we can fill at least $\frac{(2m + 2)k - m}{4mk - (2m - 2)}$ of the whole square.
\end{proof}

\section{Split-layer permutation case}
\label{sec:split_layer}

A \emph{layer permutation} is a series of decreasing runs, where the lowest element in each run is higher than the highest element in the previous run. Equivalently, we can divide the permutation into consecutive blocks, where each block is a decreasing run, and if we take one element from each block, we get an increasing sequence.

We define a \emph{split-layer permutation} to be a layer permutation where among two consecutive decreasing runs, one of them has length 1. For example, $(3, 2, 1, 5, 4)$ is a layer permutation, but is not a split-layer permutation, as the first two decreasing runs have lengths 3 and 2, respectively. However, $(3, 2, 1, 4, 6, 5, 7)$ is a split-layer permutation, as the runs have lengths 3, 1, 2, and 1.

We now show that the splitting points, aka the runs of length 1, in a split-layer permutation create independent groups of points, in that we can consider the two groups independently.

\begin{theorem}
\label{theorem:independent_layers}
If the permutation starts with $(m, m - 1, \ldots, 1)$, and $P_{m + 1}$ is the first splitting point, then the packing is minimal only if the induced packing with rectangles anchored at points  $\{P_{m + 1}, P_{m + 2}, \ldots, P_{n - 1}\}$ in the full rectangle anchored at $\dom(\{P_1, P_2, \ldots, P_m\})$ is minimal.
\end{theorem}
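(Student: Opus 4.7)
The plan is to establish an additive decomposition of the maximum packing area. Let $R = [x_m, 1] \times [y_1, 1]$, which is the full rectangle anchored at $\dom(\{P_1,\ldots,P_m\}) = (x_m, y_1)$, and let $G(\vec{p}_{>m})$ denote the maximum induced packing area on $R$ using the virtual anchor at the corner together with the original anchors $P_{m+1},\ldots,P_{n-1}$. I would show that the maximum packing area of the whole configuration equals $F(\vec{p}_{\leq m}) + G(\vec{p}_{>m})$, where $F$ depends only on $P_0,\ldots,P_m$ and is the maximum area of a packing of these anchors confined to $U \setminus R$. Once the decomposition is in place, the theorem is immediate: with $\vec{p}_{\leq m}$ held fixed, the overall maximum is minimized over $\vec{p}_{>m}$ exactly when $G$ is, so the induced packing on $R$ must also be minimal whenever the whole packing is.

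The geometric setup comes first. Because $P_{m+1}$ is a splitting point with $\pi(m+1) = m+1$ following the decreasing pattern $(m, m-1, \ldots, 1)$, each $P_k$ with $k \geq m+1$ lies in the interior of $R$, while each $P_k$ with $k \leq m$ satisfies $x_k \leq x_m$ and $y_k \leq y_1$. Consequently, for any $k \leq m$, the intersection $r_k \cap R$, if nonempty, is a rectangle with lower-left corner at $(x_m, y_1)$; by the non-overlap condition, at most one of $r_0, \ldots, r_m$ has positive intersection with $R$. From this observation I would deduce the upper bound $H \leq F + G$: the covered area of any packing inside $R$ is realizable as an induced packing with virtual anchor $(x_m, y_1)$ and is therefore at most $G$, while the covered area outside $R$ arises from rectangles at $P_0, \ldots, P_m$ truncated to $U \setminus R$ and is therefore at most $F$.

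The matching lower bound $H \geq F + G$ requires merging a max-$F$ packing in $U \setminus R$ with a max induced packing on $R$ into a single packing of the original problem. The only obstacle is that the induced packing contains a rectangle $r_v = [x_m, x_m + w] \times [y_1, y_1 + h]$ anchored at the virtual, non-original point $(x_m, y_1)$; I would absorb it by replacing $r_m$ with $[x_m, x_m + w] \times [y_m, y_1 + h]$, anchored at $P_m$ and reaching up through the strip $[x_m, 1] \times [y_m, y_1]$ into $R$. The merge preserves validity because that strip contains no anchor besides $P_m$ and no rectangle at $P_0, \ldots, P_{m-1}$ (all of which lie strictly left of $x = x_m$), so $r_m$ has complete freedom to adopt the required width and height. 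This merging — aligning the extended $r_m$ with $r_v$ so that no area is lost — is the main technical point, but it ultimately reduces to the emptiness of the strip $[x_m, 1] \times [y_m, y_1]$, after which the conclusion about minimality follows from the decomposition as described above.
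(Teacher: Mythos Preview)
Your decomposition $H = F + G$ and the upper bound $H \leq F + G$ are essentially the paper's argument, stated more carefully than the paper does. The gap is in the merge step for the lower bound.

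Your absorption replaces $r_m$ by $[x_m, x_m+w]\times[y_m, y_1+h]$. Two problems: first, the claim that the strip $[x_m,1]\times[y_m,y_1]$ contains ``no rectangle at $P_0,\ldots,P_{m-1}$'' is not justified by the anchors lying left of $x=x_m$; a rectangle anchored at $P_{m-1}=(x_{m-1},y_{m-1})$ with $y_m<y_{m-1}<y_1$ can legitimately extend rightward across the strip in a max-$F$ packing. You would need to \emph{choose} a particular max-$F$ packing (vertical strips for $r_1,\ldots,r_{m-1}$, with $r_m$ covering the whole strip), not take an arbitrary one. Second, and more seriously, even with that choice your replacement loses area whenever the virtual rectangle $r_v$ fails to have full width $w = 1 - x_m$: the new $r_m$ covers only $[x_m,x_m+w]\times[y_m,y_1]$ below $R$, so the portion $[x_m+w,1]\times[y_m,y_1]$ of the old $r_m$ is abandoned and the merged packing falls strictly below $F+G$.

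The paper avoids this by observing that in a maximum induced packing the virtual rectangle is \emph{maximal}, hence (since $P_{m+1}$ is the unique point bounding it) either reaches the top of $R$ or reaches its right edge. In the first case one absorbs $r_v$ into $r_1$ (extend $r_1=[x_1,x_m+w]\times[y_1,1]$ and fill the staircase below $y_1$ with horizontal strips from $P_2,\ldots,P_m$); in the second case $w=1-x_m$ and your absorption into $r_m$ works. You are missing this case split, and only the second case is handled by your construction.
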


\begin{proof}
Let the rectangular region above and to the right of the dominating point $\dom\left(\{P_1, P_2, \ldots, P_m\}\right) = (x_m, y_1)$ be $R$. Then, considering $R$ independently, and adding the lower left corner, or ``origin", $(x_m, y_1)$ of $R$, we can select rectangles anchored at points $P_{m + 1}$, $P_{m + 2} \ldots, P_{n - 1}$ to minimize the proportion of $R$ that can be filled.

Also, outside of $R$, the most area we can fill is a rectangle anchored at the origin, followed by the staircase region defined by $P_1$, $P_2$, $\ldots$, $P_m$.

Then, because the maximal rectangle anchored at the ``origin" of $R$ is either a vertical rectangle extending to the top of $R$, or a horizontal one extending to the right edge of $R$, it can be combined with the rectangle anchored at $P_1$ or $P_{m - 1}$, respectively, as shown in Figure~\ref{fig:layers_independent}.

\begin{figure}
    \centering
    \includegraphics{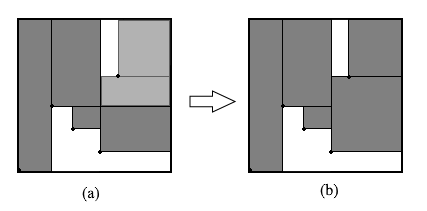}
    \caption{Example with $m = 3$. The upper right part of (a) can be incorporated into the rest of the diagram, as shown in (b)}
    \label{fig:layers_independent}
\end{figure}
\end{proof}

We now use these ideas to show that the conjecture holds for the split-layer case. Split-layer permutations either start with a decreasing run with one point, or a decreasing run of multiple points followed by a run of one point. We have already considered the case of the first run having one point in Lemma~\ref{lemma:end1}, so we consider the prelayered case, when the permutation starts with a decreasing run, followed by a point, to set up an induction argument.

\subsection{Prelayered case}
\label{subsection:prelayered}

We consider a case of two decreasing runs, where the second run consist of one point.

We first prove the following lemma:

\begin{lemma}
\label{lemma:decreasing_half_last}
If the permutation starts with $(m, m - 1, \ldots, 1)$, and we can fill at least $\frac{1}{2}$ of the rectangle $R$  with opposite vertices $P_m$ and $(1, 1)$, as shown in Figure~\ref{fig:decreasing_half_last}, and $x_1 \geq 2\left(1 - \frac{1}{2m}\right)^m - 1$, then we can fill at least $\frac{1}{2}$ of the whole square.
\end{lemma}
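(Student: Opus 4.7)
Write $c_m := 2(1-\tfrac{1}{2m})^m - 1$. The plan is to exhibit an explicit packing of area at least $\tfrac12$ and then verify the inequality at the worst configuration via Lemma~\ref{lemma:maximal_at_origin}.

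\emph{Packing.} I would anchor at the origin the maximal rectangle $[0,x_1]\times[0,1]$; at each $P_i$ for $1\le i\le m-1$ anchor the vertical strip $[x_i,x_{i+1}]\times[y_i,1]$; and at $P_m,P_{m+1},\dots,P_{n-1}$ place a packing of $R$ of area at least $\tfrac12(1-x_m)(1-y_m)$, which exists by the hypothesis combined with Lemma~\ref{lemma:scaling}. These rectangles are pairwise interior-disjoint (the first has $x\le x_1$; the middle strips live at $y\ge y_{m-1}>y_m$ with pairwise disjoint $x$-ranges; the $R$-packing lives in $[x_m,1]\times[y_m,1]$), so the packing has total area
\[
T \;=\; x_1 \;+\; \sum_{i=1}^{m-1}(x_{i+1}-x_i)(1-y_i) \;+\; \tfrac12(1-x_m)(1-y_m).
\]

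\emph{Reduction to the worst case.} The maximum packing area is continuous in the points and the hypothesis region is closed and bounded, so the infimum is attained at some $S^{*}$. The permutation structure forces $y_i>y_1$ for every $i>m$, so $P_{m+1},\dots,P_{n-1}$ all lie in the interior of $R$; consequently the only maximal rectangles at the origin are $M_j=[0,x_j]\times[0,y_{j-1}]$ for $1\le j\le m$ (with $y_0=1$) and $M_{m+1}=[0,1]\times[0,y_m]$, each bounded by a point of the decreasing run. The perturbations in the proof of Lemma~\ref{lemma:maximal_at_origin} move the bounding points $Q_i=P_i$ only upward or rightward, so they preserve both closed conditions in the hypothesis; applying the lemma at $S^{*}$ yields
\[
x_1 \;=\; x_2 y_1 \;=\; \cdots \;=\; x_m y_{m-1} \;=\; y_m \;=:\; A \ge c_m.
\]

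\emph{Closing computation.} Substituting $y_i=A/x_{i+1}$ and telescoping,
\[
T \;=\; x_m \;-\; A(m-1) \;+\; A\sum_{i=1}^{m-1}\frac{x_i}{x_{i+1}} \;+\; \tfrac12(1-x_m)(1-A).
\]
By AM--GM, $\sum_{i=1}^{m-1} x_i/x_{i+1}\ge(m-1)(A/x_m)^{1/(m-1)}$, so $T$ is at least a function of $x_m$ alone whose unique critical point in $[A,1]$ is $x_m^{*}=A[2/(1+A)]^{(m-1)/m}$. Setting $h:=[(1+A)/2]^{1/m}$, so $h^m=(1+A)/2$, at $x_m^{*}$ the lower bound simplifies to $T\ge Am(h-1)+(A+1)/2$, which is at least $\tfrac12$ precisely when $h\ge 1-\tfrac{1}{2m}$, i.e.\ $(1+A)/2\ge(1-\tfrac{1}{2m})^m$, i.e.\ $A\ge c_m$---exactly the given hypothesis.

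The main obstacle is the Reduction step: because the hypothesis region is not open, one must verify that the perturbations underlying Lemma~\ref{lemma:maximal_at_origin} remain admissible at a boundary minimizer. For $i<m$ the perturbed point lies outside $R$ and $x_1$ is never decreased, so both parts of the hypothesis are preserved automatically; the case $i=m$ is more delicate since the perturbation changes $R$ itself, and one has to confirm that the $\tfrac12$-fill condition survives a small nudge (for instance by checking that any packing of the shrunken $R$ is also a packing of the original and arguing that the fill ratio does not drop to first order). Once this is in place, the AM--GM estimate and single-variable calculus of the Closing computation step finish the proof and pick out $c_m$ as the sharp threshold.
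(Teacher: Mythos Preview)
Your argument is correct and follows essentially the same route as the paper: invoke Lemma~\ref{lemma:maximal_at_origin} to force the equal-area relations $x_1=x_2y_1=\cdots=y_m$, write the packed area as origin-rectangle plus staircase minus half of $R$, apply AM--GM to the telescoping ratios, and minimize the resulting one-variable function of $x_m$ to extract the threshold $2(1-\tfrac{1}{2m})^m-1$. Your substitution $h=[(1+A)/2]^{1/m}$ is a neat repackaging of the paper's endgame, which instead clears denominators and reduces to the equivalent inequality $2m\bigl(\tfrac{1+x_1}{2}\bigr)^{1/m}\ge 2m-1$; the two computations are the same in substance.

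The one point worth noting is that your ``main obstacle''---whether the perturbations behind Lemma~\ref{lemma:maximal_at_origin} stay inside the hypothesis region when the minimizer sits on its boundary, especially for $i=m$ where nudging $P_m$ changes $R$ itself---is a genuine subtlety, but it is one the paper simply does not address: the paper opens its proof with ``First, by Lemma~\ref{lemma:maximal_at_origin}, we have\ldots'' and proceeds directly to the algebra. So you are being more, not less, careful than the original here. Your sketch of why the $i<m$ perturbations are harmless is correct; for $i=m$ one clean way to close the gap is to observe that the lower bound you actually prove, $T\ge\frac12$, depends only on $x_1,\dots,x_m$ and the single scalar ``we can fill $\ge\tfrac12$ of $R$,'' so you may as well minimize $T$ directly over all $(x_1,\dots,x_m)$ with $x_1\ge c_m$ and $x_1\le\cdots\le x_m\le 1$, bypassing Lemma~\ref{lemma:maximal_at_origin} entirely---the AM--GM and calculus steps already do this unconstrained minimization.
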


\begin{figure}
    \centering
    \includegraphics[scale = 0.7]{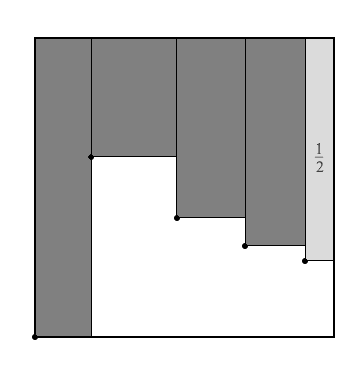}
    \caption{Case $m = 4$}
    \label{fig:decreasing_half_last}
\end{figure}

\begin{proof}
First, by Lemma~\ref{lemma:maximal_at_origin}, we have $x_1 = y_1x_2 = y_2x_3 = \cdots = y_{m - 1}x_m = y_m$, so $P_a = \left(x_a, \frac{x_1}{x_{a + 1}}\right)$ for $a < m$, and $P_m = \left(x_m, x_1\right)$.

Then, by the area computation in Theorem~\ref{theorem:decreasing_case}, the area of the whole square we could fill, if we could fill \emph{all} of rectangle $R$, is
\[1 - mx_1 + x_1\left(\frac{x_1}{x_2} + \frac{x_2}{x_3} + \cdots + \frac{x_{m - 1}}{x_m} + x_m\right).\]
Subtracting half the area of $R$, we see that the total area is
\[1 - mx_1 + x_1\left(\frac{x_1}{x_2} + \frac{x_2}{x_3} + \cdots + \frac{x_{m - 1}}{x_m} + x_m\right) - \frac{1}{2}(1 - x_m)(1 - x_1).\]

Now, note that for fixed $x_1$ and $x_m$, $\frac{x_1}{x_2} + \frac{x_2}{x_3} + \cdots + \frac{x_{m - 1}}{x_m}$ is minimized when
\[\frac{x_1}{x_2} = \frac{x_2}{x_3} = \cdots = \frac{x_{m - 1}}{x_m} = \sqrt[m - 1]{\frac{x_1}{x_m}},\]
so the area is at least
\[1 - mx_1 + x_1\left((m - 1)\sqrt[m - 1]{\frac{x_1}{x_m}} + x_m\right) - \frac{1}{2}(1 - x_m)(1 - x_1).\]

Expanding the last term and simplifying, we see that the area is at least
\[\frac{1}{2} - \left(m - \frac{1}{2}\right)x_1 + \frac{1}{2}x_m - \frac{1}{2}x_1x_m + x_1\left((m - 1)\sqrt[m - 1]{\frac{x_1}{x_m}} + x_m\right).\]

We aim to show that this area is at least $\frac{1}{2}$, so it suffices to show
\[-\left(m - \frac{1}{2}\right)x_1 + \frac{1}{2}x_m - \frac{1}{2}x_1x_m + x_1\left((m - 1)\sqrt[m - 1]{\frac{x_1}{x_m}} + x_m\right) \geq 0.\]

Rearranging and doubling both sides, we see that this is equivalent to
\[x_m + (2m - 2)x_1^{\frac{m}{m - 1}}x_m^{-\frac{1}{m - 1}} + x_1x_m \geq (2m - 1)x_1.\]

Now, for fixed $x_1$, the left side is minimized when the partial derivative with respect to $x_m$ is 0. This partial derivative is
\begin{align*}
    &1 + x_1 + (2m - 2)\left(\frac{x_1}{x_m}\right)^{\frac{m}{m - 1}} \cdot \left(-\frac{1}{m - 1}\right) \\
    &= 1 + x_1 - 2\left(\frac{x_1}{x_m}\right)^{\frac{m}{m - 1}}.
\end{align*}

Thus, we have \[1 + x_1 - 2\left(\frac{x_1}{x_m}\right)^{\frac{m}{m - 1}} = 0,\] or equivalently, \[x_m = \frac{x_1}{\left(\frac{1 + x_1}{2}\right)^{\frac{m - 1}{m}}}.\]

Plugging this into our inequality, we see that it suffices to show
\[\frac{x_1}{\left(\frac{1 + x_1}{2}\right)^{\frac{m - 1}{m}}} + (2m - 2)x_1^{\frac{m}{m - 1}}\left(\frac{x_1}{\left(\frac{1 + x_1}{2}\right)^{\frac{m - 1}{m}}}\right)^{-\frac{1}{m - 1}} + x_1 \cdot \frac{x_1}{\left(\frac{1 + x_1}{2}\right)^{\frac{m - 1}{m}}} \geq (2m - 1)x_1.\]

If $x_1 = 0$, then both sides are 0 and the inequality is true. Thus, we may assume $x_1 > 0$, so we can divide both sides by $x_1$ to obtain
\[\frac{1}{\left(\frac{1 + x_1}{2}\right)^{\frac{m - 1}{m}}} + (2m - 2)\left(\frac{1}{\left(\frac{1 + x_1}{2}\right)^{\frac{m - 1}{m}}}\right)^{-\frac{1}{m - 1}} + x_1 \cdot \frac{1}{\left(\frac{1 + x_1}{2}\right)^{\frac{m - 1}{m}}} \geq 2m - 1.\]

Simplifying, we see that the left hand side is equal to \[2m\left(\frac{1 + x_1}{2}\right)^{\frac{1}{m}}.\]

However, we assumed that $x_1 \geq 2\left(1 - \frac{1}{2m}\right)^{m} - 1$, so we have \[2m\left(\frac{1 + x_1}{2}\right)^{\frac{1}{m}} \geq 2m\left(1 - \frac{1}{2m}\right) = 2m - 1.\]

Therefore, we can fill at least $\frac{1}{2}$ of the square.
\end{proof}

Now, we use the asymptotic behavior of $\left(1 - \frac{1}{2m}\right)^m$ to show that $x_1 \geq 2e^{-\frac{1}{2}} - 1$ work for any $m$.

\begin{corollary}
\label{corollary:decreasing_half_last}
If the permutation starts with $(m, m - 1, \ldots, 1)$, and if we can fill at least $\frac{1}{2}$ of the rectangle with opposite vertices $P_m$ and $(1, 1)$, and $x_1 \geq 2e^{-\frac{1}{2}} - 1 \approx 0.2131$, then we can fill at least $\frac{1}{2}$ of the whole square.
\end{corollary}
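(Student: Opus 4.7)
The plan is to reduce the corollary directly to Lemma~\ref{lemma:decreasing_half_last} by verifying that the stronger-looking hypothesis $x_1 \geq 2e^{-1/2} - 1$ always implies the $m$-dependent hypothesis $x_1 \geq 2\left(1 - \frac{1}{2m}\right)^m - 1$ used there. Since the other hypothesis of Lemma~\ref{lemma:decreasing_half_last} (that at least $\frac{1}{2}$ of the rectangle with opposite vertices $P_m$ and $(1,1)$ can be filled) is already assumed in the corollary, once this implication is established the conclusion is immediate.

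Thus it suffices to prove the single elementary inequality
\[
\left(1 - \frac{1}{2m}\right)^{m} \leq e^{-1/2}
\qquad\text{for every integer } m \geq 1.
\]
The standard bound $1 - t \leq e^{-t}$, valid for all real $t$, applied with $t = \frac{1}{2m}$ yields $1 - \frac{1}{2m} \leq e^{-1/(2m)}$, and raising both sides (which are positive) to the $m$-th power gives $\left(1 - \frac{1}{2m}\right)^{m} \leq e^{-1/2}$. Doubling and subtracting $1$ from both sides then gives $2\left(1 - \frac{1}{2m}\right)^m - 1 \leq 2e^{-1/2} - 1$, so the hypothesis $x_1 \geq 2e^{-1/2} - 1$ is at least as strong as the hypothesis needed for Lemma~\ref{lemma:decreasing_half_last}.

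Applying Lemma~\ref{lemma:decreasing_half_last} then shows that we can fill at least $\frac{1}{2}$ of the unit square, which completes the proof. There is no real obstacle here: the content is the observation that $\left(1 - \frac{1}{2m}\right)^m$ is bounded above, uniformly in $m$, by its limiting value $e^{-1/2}$, so a single threshold on $x_1$ suffices for every $m$ simultaneously.
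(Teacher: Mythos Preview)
Your proof is correct and follows essentially the same approach as the paper: both reduce directly to Lemma~\ref{lemma:decreasing_half_last} by establishing the uniform bound $\left(1 - \frac{1}{2m}\right)^m \leq e^{-1/2}$. The only cosmetic difference is that the paper invokes the monotonicity of the sequence $\left(1 - \frac{1}{2m}\right)^m$ together with its limit, whereas you derive the bound directly from $1 - t \leq e^{-t}$; your route is arguably the cleaner of the two.
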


\begin{proof}
Note that $\left(1 - \frac{1}{2m}\right)^m$ is an increasing function of $m$, and approaches $e^{-\frac{1}{2}}$.

Thus, given $x_1 \geq 2e^{-\frac{1}{2}} - 1 \geq 2\left(1 - \frac{1}{2m}\right)^m - 1$, we can fill at least half of the whole square.
\end{proof}

We now prove the Main Conjecture~\ref{conjecture:main} in the case of a prelayered permutation for which the first layer has 2 points under the assumption that we can fill at least half the area above and to the right of the dominating point $(x_2, y_1)$ of $P_1$ and $P_2$.

\begin{lemma}
\label{lemma:split_layer_2_points}
Suppose the points satisfy a split-layer permutation starting with $(2, 1)$, and let $R$ be the full rectangle anchored at $\dom\{P_1, P_2\}$, and we can fill $\frac{1}{2}$ of $R$ with an induced packing of $R$. Then, we can fill at least $\frac{1}{2}$ of the whole square.
\end{lemma}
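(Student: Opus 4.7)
The plan is to combine the assumed induced packing of $R$ with three explicit rectangles placed outside $R$, one anchored at each of $O$, $P_1$, $P_2$. Specifically, I would take $r_{P_1}:=[x_1,x_2]\times[y_1,1]$ and $r_{P_2}:=[x_2,1]\times[y_2,y_1]$, and pick $r_O$ to be whichever of the three possible maximal rectangles anchored at the origin has largest area, so that $A(r_O)\geq \max(x_1,\,x_2 y_1,\,y_2)$. A direct case check shows that $r_O,\,r_{P_1},\,r_{P_2}$ have pairwise disjoint interiors and are all disjoint from $R$ regardless of which of the three candidates is taken for $r_O$. The split-layer hypothesis forces the layer immediately following $(2,1)$ to have length one, so $P_3$ is a splitting point and Theorem~\ref{theorem:independent_layers} lets us absorb the virtual-origin rectangle of the induced packing of $R$ into $r_{P_1}$ or $r_{P_2}$ without any loss of area. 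Adjoining these three rectangles to the induced packing of $R$ (whose area is at least $\tfrac{1}{2}(1-x_2)(1-y_1)$) yields a valid packing of the whole square whose total area is at least
\[
g(x_1,x_2,y_1,y_2) \;:=\; \max(x_1,\,x_2y_1,\,y_2) + (x_2-x_1)(1-y_1) + (1-x_2)(y_1-y_2) + \tfrac{1}{2}(1-x_2)(1-y_1).
\]

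The remainder of the proof consists in showing $g\geq \tfrac{1}{2}$. Since $g$ depends only on the four coordinates of $P_1,P_2$, which lie in a compact set, its minimum is attained. The partial derivative $\partial g/\partial x_1$ equals $y_1\geq 0$ when $x_1$ realizes the running maximum at the origin and $-(1-y_1)\leq 0$ when it does not, so at a minimizer $x_1=\max(x_2 y_1,y_2)$; a symmetric argument gives $y_2=\max(x_1,x_2 y_1)$, and together these conditions force $x_1=y_2=x_2 y_1$---exactly the equal-area condition for maximal rectangles at the origin given by Lemma~\ref{lemma:maximal_at_origin}. Setting $t=x_1=y_2$, $s=y_1$, and $x_2=t/s$, and clearing denominators, the inequality $g\geq \tfrac{1}{2}$ reduces to the single-variable inequality
\[
2t^2+(1-5s+2s^2)\,t+s^2\;\geq\;0 \qquad (0\leq t\leq s\leq 1).
\]

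The expected main obstacle is verifying this final quadratic-in-$t$ inequality. I would treat the left-hand side as a quadratic in $t$ and compute its discriminant $(1-5s+2s^2)^2-8s^2 = 1-10s+21s^2-20s^3+4s^4$; a direct check shows this quartic in $s$ is negative on most of $[0,1]$, so the quadratic in $t$ is strictly positive there. On the small remaining sub-interval near $s=0$, the coefficient $1-5s+2s^2$ of $t$ is strictly positive, so the sum of the two real roots is negative while their product $s^2/2$ is nonnegative; hence both real roots are non-positive and the quadratic is still nonnegative for all $t\geq 0$. The remaining ingredients---the packing construction, the disjointness check, and the partial-derivative reduction---are routine given Theorem~\ref{theorem:independent_layers} and Lemma~\ref{lemma:maximal_at_origin}.
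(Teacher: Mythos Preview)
Your construction and the final inequality are both correct, and they coincide with the paper's approach: the same packing (origin rectangle, the two strips $r_{P_1},r_{P_2}$, and half of $R$), leading to the same area expression
\[
\tfrac{1}{2}-\tfrac{5}{2}x_1+\tfrac{1}{2}x_2+\tfrac{1}{2}\tfrac{x_1}{x_2}+\tfrac{x_1^2}{x_2}+x_1x_2,
\]
after the equal-area reduction $x_1=x_2y_1=y_2$. The only real difference is the endgame: the paper finishes with a single four-term AM--GM,
\[
\tfrac{1}{2}x_2+\tfrac{1}{2}\tfrac{x_1}{x_2}+\tfrac{x_1^2}{x_2}+x_1x_2\;\ge\;4\sqrt[4]{\tfrac{1}{4}x_1^4}=2\sqrt{2}\,x_1>\tfrac{5}{2}x_1,
\]
whereas you substitute $s=y_1$, $t=x_1$ and analyse the discriminant of the resulting quadratic in $t$. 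Your route is more computational but perfectly valid; the AM--GM is shorter.

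There is, however, a genuine gap in your reduction step. The two first-order conditions you derive, $x_1=\max(x_2y_1,y_2)$ and $y_2=\max(x_1,x_2y_1)$, do \emph{not} jointly force $x_2y_1=x_1$: they only give $x_1=y_2\ge x_2y_1$. For instance $x_2y_1=0.2$, $x_1=y_2=0.3$ satisfies both conditions but not the equality you claim. So the case $x_1=y_2>x_2y_1$ is left unhandled. It can be closed (e.g.\ by also examining $\partial g/\partial x_2$ and $\partial g/\partial y_1$, or by a short direct computation showing $g\ge\tfrac12$ in that regime), but as written the implication is false. The paper avoids this by appealing to Lemma~\ref{lemma:maximal_at_origin} on the locally minimal \emph{point set} rather than optimising the lower-bound function $g$; that lemma directly yields all three equalities $x_1=x_2y_1=y_2$ at once.
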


\begin{proof}
Suppose the maximum area is minimized. Then, by Lemma~\ref{lemma:maximal_at_origin}, we have $x_1 = x_2y_1 = y_2$, as shown in Figure~\ref{fig:split_layer_2_points}, so $P_1 = \left(x_1, \frac{x_1}{x_2}\right)$, and $P_2 = (x_2, x_1)$.

\begin{figure}
    \centering
    \includegraphics[scale = 0.7]{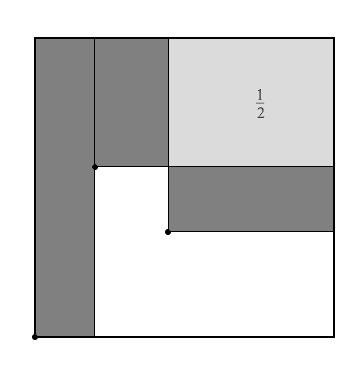}
    \caption{The first layer with 2 points}
    \label{fig:split_layer_2_points}
\end{figure}

By the area computation in Theorem~\ref{theorem:decreasing_case}, the area we can fill, if we could fill all of $R$, is 
$1 - 2x_1 + x_1\left(\frac{x_1}{x_2} + x_2\right).$

Removing half the area of $R$ from this expression, we see that the area we can fill is
\begin{align*}
    &1 - 2x_1 + x_1\left(\frac{x_1}{x_2} + x_2\right) - \frac{1}{2}(1 - x_2)\left(1 - \frac{x_1}{x_2}\right) \\
    &= \frac{1}{2} - \frac{5}{2}x_1 + \frac{1}{2}x_2 + \frac{1}{2}\frac{x_1}{x_2} + \frac{x_1^2}{x_2} + x_1x_2.
\end{align*}

By the AM-GM inequality, we have
\[\frac{\frac{1}{2}x_2 + \frac{1}{2}\frac{x_1}{x_2} + \frac{x_1^2}{x_2} + x_1x_2}{4} \geq \sqrt[4]{\frac{1}{2}x_2 \cdot \frac{1}{2}\frac{x_1}{x_2} \cdot \frac{x_1^2}{x_2} \cdot x_1x_2} = \frac{x_1}{\sqrt{2}}.\]

Thus,
\[\frac{1}{2}x_2 + \frac{1}{2}\frac{x_1}{x_2} + \frac{x_1^2}{x_2} + x_1x_2 \geq 2\sqrt{2}x_1,\]
so the area is
\[\frac{1}{2} - \frac{5}{2}x_1 + \frac{1}{2}x_2 + \frac{1}{2}\frac{x_1}{x_2} + \frac{x_1^2}{x_2} + x_1x_2 \geq \frac{1}{2} - \frac{5}{2}x_1 + 2\sqrt{2}x_1 \geq \frac{1}{2},\]
meaning that the total area is at least $\frac{1}{2}$.
\end{proof}

We now use these results to show that we can fill at least half the square in the case of a split-layer permutation when the first layer has more than 2 points.

\begin{lemma}
\label{lemma:split_layer_start}
Suppose the permutation begins with $(m, m - 1, \ldots, 1, m + 1)$, for $m > 2$, and let $R$ be the full rectangle anchored at the dominating point of $\{P_1, P_2, \ldots, P_m\}$, and we can fill $\frac{1}{2}$ of $R$ with an induced packing of $R$, then we can fill at least $\frac{1}{2}$ of the whole square.
\end{lemma}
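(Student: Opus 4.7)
The plan is to compare the available fill to the pure decreasing-case formula from Theorem~\ref{theorem:decreasing_case} and then dispatch the resulting inequality via AM-GM, splitting on the size of $x_1$. By continuity of the maximum area, it suffices to work with a locally minimal configuration, so Lemma~\ref{lemma:maximal_at_origin} forces the chain $x_1 = x_2 y_1 = \cdots = x_m y_{m-1} = y_m$, giving $y_i = x_1/x_{i+1}$ for $i < m$ and $y_m = x_1$. By Theorem~\ref{theorem:independent_layers}, the maximum-area packing splits into a piece outside $R$ (the origin rectangle plus the staircase induced by $P_1, \ldots, P_m$) and a piece inside $R$ (the induced packing of $R$, at least $\tfrac{1}{2}\operatorname{area}(R)$ by hypothesis). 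With
\[
D = 1 - m x_1 + x_1\Bigl(\sum_{i=1}^{m-1} \tfrac{x_i}{x_{i+1}} + x_m\Bigr), \qquad A_R = (1-x_m)\bigl(1 - \tfrac{x_1}{x_2}\bigr),
\]
the outside piece contributes exactly $D - A_R$, so the total area is at least $D - \tfrac{1}{2} A_R$, and the task becomes proving $D - \tfrac{1}{2} A_R \geq \tfrac{1}{2}$.

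I would case split on whether $x_1 \geq 2 e^{-1/2} - 1$. In the large-$x_1$ case, the half-fill of $R$ together with the rectangle anchored at $P_m$ filling the strip $[x_m,1] \times [y_m, y_1]$ cover at least half of the larger rectangle $R' = [x_m, 1] \times [y_m, 1]$ (an algebra check using $y_1 \geq y_m$), so Corollary~\ref{corollary:decreasing_half_last} applies and yields the conclusion directly. In the small-$x_1$ case, I apply AM-GM to get $\sum_{i=1}^{m-1} x_i/x_{i+1} \geq (m-1)(x_1/x_m)^{1/(m-1)}$ and, setting $a := (x_1/x_m)^{1/(m-1)}$ and $b := x_m$ and taking $x_1/x_2 = a$ (the AM-GM equality case), reduce $D - \tfrac{1}{2} A_R \geq \tfrac{1}{2}$ to the two-variable inequality
\[
a + b(1-a) \;\geq\; 2\, a^{m-1} b \bigl[m - (m-1)a - b\bigr] \qquad (a, b \in (0,1]).
\]

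The four boundary cases ($a$ or $b$ in $\{0,1\}$) are elementary. In the interior, the right-hand side is maximized over $(0,1]^2$ at $a = b = m/(m+1)$ with value $(m/(m+1))^{m+1}$, while the left-hand side there equals $m(m+2)/(2(m+1)^2)$; comparison reduces to the elementary inequality $(m+2)(m+1)^{m-1} \geq 2 m^m$, equivalently $\bigl(1 + \tfrac{1}{m}\bigr)^{m-1} \cdot \tfrac{m+2}{2m} \geq 1$, which is valid for all $m \geq 2$. The main obstacle is justifying the reduction to the AM-GM equality configuration: since $x_2$ appears both in the sum $\sum x_i/x_{i+1}$ and in the factor $1 - x_1/x_2$ inside $A_R$, one cannot naively substitute equality. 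This must be handled by a partial-derivative analysis in $x_2$ showing that relaxing $x_i/x_{i+1} = a$ only increases $D - \tfrac{1}{2} A_R$, so the minimum of the bound over all admissible configurations is indeed attained at AM-GM equality and the two-variable inequality above is decisive.
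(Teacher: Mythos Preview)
Your large-$x_1$ branch is fine and matches the paper's Case~2 route through Corollary~\ref{corollary:decreasing_half_last}. The small-$x_1$ branch, however, has a genuine gap in exactly the place you flagged as ``the main obstacle'' --- and your proposed fix does not work.

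Write $t_i = x_i/x_{i+1}$ for $1 \le i \le m-1$, so $\prod_i t_i = x_1/x_m$ is fixed once $x_1, x_m$ are. Then
\[
D - \tfrac{1}{2}A_R \;=\; \text{(terms in $x_1,x_m$)} \;+\; x_1\sum_{i=1}^{m-1} t_i \;+\; \tfrac{1}{2}(1-x_m)\,t_1,
\]
so the coefficient on $t_1$ is $x_1 + \tfrac{1}{2}(1-x_m)$, strictly larger than the coefficient $x_1$ on $t_2,\ldots,t_{m-1}$ whenever $x_m < 1$. Minimizing $\sum c_i t_i$ under $\prod t_i = C$ gives $t_i \propto 1/c_i$, so the minimizer has $t_1 < t_2 = \cdots = t_{m-1}$, \emph{not} $t_1 = t_2 = \cdots = t_{m-1}$. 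Hence the full AM--GM equality configuration is not where $D - \tfrac{1}{2}A_R$ is minimized, and your two-variable inequality in $(a,b)$, even if true, does not control the actual minimum. This is precisely why the paper fixes $x_1, x_2, x_m$ and applies AM--GM only to $\sum_{i\ge 2} t_i$, treating $x_2$ (equivalently $t_1$) as a separate free variable and then optimizing via partial derivatives in $x_m$ and the common ratio $r$; the resulting case split is on $x_m \gtrless \tfrac{1}{2}$, with the $x_m \le \tfrac{1}{2}$ case collapsing to Lemma~\ref{lemma:split_layer_2_points}.

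Separately, your verification of the two-variable inequality is not a proof: showing that the left side at the maximizer of the right side exceeds the right side's maximum says nothing about other points $(a,b)$, since the left side $a + b(1-a)$ is not constant. (Indeed $(a,b) = (m/(m+1), m/(m+1))$ is not even a critical point of the difference: the $b$-partial of $\text{LHS}-\text{RHS}$ there equals $1/(m+1) \neq 0$.) You would need to analyze $\text{LHS}-\text{RHS}$ directly.
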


\begin{proof}
By a similar method as in Theorem~\ref{theorem:decreasing_case}, we separately fill the rectangle $R$ and the rest of the square.

Consider the induced packing of rectangle $R$. As $P_{m + 1}$ is a splitting point, it is the only point bounding the maximal rectangles anchored at the lower left corner of $R$. Each of these rectangles can be incorporated into one of the other rectangles. Thus, if the induced packing of $R$ can cover a proportion of $R$, then a packing in the original unit square can also fill that proportion of $R$.

Now we can solve the problem while ignoring the points $P_{m + 1}$, $P_{m + 2}$, $\ldots$, $P_{n - 1}$, and then subtracting the part of the rectangle $R$ that we cannot fill: $\frac{1}{2}(1 - y_1)(1 - x_m)$.

By the area computation in Theorem~\ref{theorem:decreasing_case}, without the point $P_{m+1}$, the amount of area we can fill is
\[1 - mx_1 + x_1\left(\frac{x_1}{x_2} + \frac{x_2}{x_3} + \cdots + \frac{x_{m - 1}}{x_m} + x_m\right).\]
Therefore, the maximum area we can fill is
\[1 - mx_1 + x_1\left(\frac{x_1}{x_2} + \frac{x_2}{x_3} + \cdots + \frac{x_{m - 1}}{x_m} + x_m\right) - \frac{1}{2}\left(1 - \frac{x_1}{x_2}\right)(1 - x_m).\]

If the three values $x_1$, $x_2$, and $x_m$ are fixed, then we aim to minimize the value of $\frac{x_2}{x_3} + \frac{x_3}{x_4} + \cdots + \frac{x_{m - 1}}{x_m}$. By the AM-GM inequality, this is minimized when $\frac{x_2}{x_3} = \frac{x_3}{x_4} = \cdots = \frac{x_{m - 1}}{x_m}$. Let this common value be $r$. Then, we have 
\[x_2 = r^{m - 2}x_m.\]

Plugging this into the expression for the area and simplifying, we get that the maximum area is 
\[\frac{1}{2} - mx_1 + \frac{x_1^2}{r^{m - 2}x_m} + (m - 2)rx_1 + x_1x_m + \frac{x_1}{2r^{m - 2}x_m} + \frac{1}{2}x_m - \frac{x_1}{2r^{m - 2}}.\]

Taking the partial derivative with respect to $x_m$ and setting it to 0, we see that 
\[-\frac{x_1^2}{r^{m - 2}x_m^2} + x_1 - \frac{x_1}{2r^{m - 2}x_m^2} + \frac{1}{2} = 0.\]
We can factor this as
\[\left(x_1 + \frac{1}{2}\right)\left(1 - \frac{x_1}{r^{m - 2}x_m^2}\right) = 0.\]
Because $x_1 + \frac{1}{2}$ is always positive, we have that
\[x_1 = r^{m - 2}x_m^2.\]
Plugging this into our expression for the total area, we get that the area is
\[\frac{1}{2} - mx_m^2r^{m - 2} + 2x_m^3r^{m - 2} + (m - 2)x_m^2r^{m - 1} +  x_m - \frac{1}{2}x_m^2.\]
Taking the partial derivative with respect to $r$ and setting it to 0, we must have
\[-m(m - 2)x_m^2r^{m - 3} + 2(m - 2)x_m^3r^{m - 3} + (m - 2)(m - 1)x_m^2r^{m - 2} = 0.\]
Dividing out $(m - 2)x_m^2r^{m - 3}$, we have that $-m + 2x_m + (m - 1)r = 0$. Thus, the partial derivative is 0 when
\[r = \frac{m - 2x_m}{m - 1}.\]

Note that as $r \leq 1$, if $x_m \leq \frac{1}{2}$, we have $\frac{m - 2x_m}{m - 1} \geq 1$, so in this case, we have that $r = 1$ minimizes the area.

We now have two cases, depending on the value of $x_m$.

\paragraph*{Case 1: $x_m \leq \frac{1}{2}$\\}

In this case, we must have $r = 1$. Then, we know that $x_1 = r^{m - 2}x_m^2 = x_m^2$, and $x_2 = x_3 = \cdots = x_m$.

Then, as shown in Figure~\ref{fig:split_layer_r1}, this is equivalent to two decreasing points $P_1$ and $P_m$, where we can fill at least $\frac{1}{2}$ of the area above and to the right of the dominating points of $\{P_1, P_m\}$. This is simply the split-layer case with 2 points in the first layer, which was resolved in Lemma~\ref{lemma:split_layer_2_points}.

\begin{figure}
    \centering
    \includegraphics[scale = 0.7]{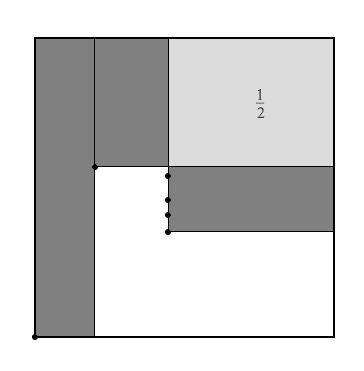}
    \caption{Example when $r = 1$ and $m = 5$}
    \label{fig:split_layer_r1}
\end{figure}

\paragraph*{Case 2: $x_m > \frac{1}{2}$\\}

Note that the dominating point of $\{P_1, P_2, \ldots, P_m\}$ is $(x_m, y_1)$, so rectangle $R$ has opposite vertices $(x_m, y_1)$ and $(1, 1)$. As $y_1 \geq y_m$, we have that the interior of $R$ is a subset of the region $Q$ above and to the right of $P_m$.

We can fill all of $Q - R$, the part of $Q$ that is not in $R$, as shown in Figure~\ref{fig:split_layer_half_last}. Also, we can fill at least half of $R$. Thus, we can fill at least half of $Q$.

\begin{figure}
    \centering
    \includegraphics[scale = 0.7]{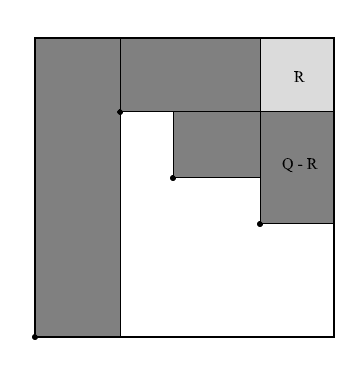}
    \caption{$R$ and $Q - R$ when $m = 3$}
    \label{fig:split_layer_half_last}
\end{figure}

Thus, by Corollary~\ref{corollary:decreasing_half_last}, it suffices to show that $x_1 \geq 2e^{-\frac{1}{2}} - 1$.

Note that
\[x_1 = r^{m - 2}x_m^2 = x_m^2\left(\frac{m - 2x_m}{m - 1}\right)^{m - 2}.\]

The derivative of the right hand side expression with respect to $x_m$ is
\begin{align*}
    &2x_m\left(\frac{m - 2x_m}{m - 1}\right)^{m - 2} + x_m^2 \cdot \frac{-2}{m - 1} \cdot (m - 2)\left(\frac{m - 2x_m}{m - 1}\right)^{m - 3} \\
    &= 2x_m\left(\frac{m - 2x_m}{m - 1}\right)^{m - 3} \cdot \frac{m}{m - 1}(1 - x_m) \\
    &\geq 0,
\end{align*}
as every term in the last product is nonnegative.

Thus, $x_1$ is a nondecreasing function of $x_m$, so the minimum value of $x_1$ occurs at the minimum value of $x_m$, which is $\frac{1}{2}$. Plugging in $x_m = \frac{1}{2}$, we see that $\frac{1}{4} \geq 2e^{-\frac{1}{2}} - 1$ is the minimum value of $x_1$.

Therefore, by Corollary~\ref{corollary:decreasing_half_last}, we can fill at least half of the whole square.
\end{proof}

We now use these results to resolve the split-layer case.

\begin{theorem}
\label{theorem:split_layer_case}
When the points follow a split-layer permutation, we can fill at least half the square.
\end{theorem}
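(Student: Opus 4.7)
The plan is to proceed by strong induction on $n$, the total number of points (including the origin). The base cases $n \leq 2$ are already handled in the preliminaries, where it is shown that the maximum area is always at least $\tfrac{3}{4}$. For the inductive step, I would classify a split-layer permutation by its first layer, invoking one of the three prepared lemmas depending on the layer size.

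If the permutation consists of a single decreasing run (i.e.\ no pair of consecutive layers exists), it is an all-decreasing permutation, and Theorem~\ref{theorem:decreasing_case} gives a maximum area of at least $1 - (1 - 1/n)^n$, which is $\geq \tfrac{1}{2}$ (shown in the corollary following Theorem~\ref{theorem:decreasing_case}). Otherwise there is a second layer, and the split-layer property forces at least one of the first two layers to have size $1$, giving two subcases.

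In the subcase where the first layer has size $1$, the point $P_1$ is a splitting point with $\pi(1) = 1$. The remaining points $P_2, \ldots, P_{n-1}$ all lie in the rectangle $R$ above and to the right of $P_1$, and the relative order they induce is again a split-layer permutation on $n-1$ points (removing a single-element layer from the front preserves the split-layer condition between any surviving consecutive layers). By Lemma~\ref{lemma:scaling} together with the inductive hypothesis, the induced packing on $R$ fills at least $\tfrac{1}{2}$ of $R$, so Lemma~\ref{lemma:start1} applied with $k \geq \tfrac{1}{2}$ gives a whole-square area of at least $\tfrac{4k-1}{4k} \geq \tfrac{1}{2}$. In the subcase where the first layer has size $m \geq 2$, the split-layer condition forces the next layer to have size $1$, so the permutation begins $(m, m-1, \ldots, 1, m+1, \ldots)$ with $P_{m+1}$ a splitting point. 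The points $P_{m+1}, \ldots, P_{n-1}$ lie in the rectangle $R$ anchored at $\dom\{P_1, \ldots, P_m\} = (x_m, y_1)$ and induce a split-layer permutation with strictly fewer points; by induction and Lemma~\ref{lemma:scaling}, we can fill at least half of $R$ with the induced packing. Then Lemma~\ref{lemma:split_layer_2_points} (if $m = 2$) or Lemma~\ref{lemma:split_layer_start} (if $m > 2$) delivers the desired bound of $\tfrac{1}{2}$ on the whole square.

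The argument is essentially bookkeeping: the three geometric lemmas have already absorbed all of the optimization work. The only step requiring care is the verification that stripping off the first layer and passing to the induced packing on the sub-rectangle preserves the split-layer structure on the remaining points, which I expect to be the main (but mild) obstacle; it follows once one observes that removing a prefix layer of a split-layer permutation leaves the split-layer constraint on every pair of surviving consecutive layers intact.
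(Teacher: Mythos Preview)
Your proposal is correct and follows essentially the same strong-induction strategy as the paper's proof: peel off the first layer, apply the inductive hypothesis to the induced packing on the sub-rectangle, and then invoke Lemma~\ref{lemma:start1} when the first layer has size $1$ or the prelayered lemmas when it has size $m\ge 2$. Your explicit separation of the case $m=2$ (via Lemma~\ref{lemma:split_layer_2_points}) from $m>2$ (via Lemma~\ref{lemma:split_layer_start}) is slightly more careful than the paper, which invokes only Lemma~\ref{lemma:split_layer_start} even though that lemma is stated for $m>2$.
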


\begin{proof}
We prove this by strong induction on $n$. First, for the base case $n = 1$, we can fill the entire square.

Now, for general $n$, let $m$ be the length of the first layer.

\paragraph*{Case 1: $m = 1$ \\}
Note that the region above and to the right of $P_1$ forms the split-layer case with $n - 1$ points. By the inductive hypothesis, we can fill at least half of this area. Then, by Lemma~\ref{lemma:start1}, we can fill at least $\frac{1}{2}$ of the whole square as well.

\paragraph*{Case 2: $m > 1$ \\}
If the permutation only has one layer, then it is a decreasing permutation. Then, by Theorem~\ref{theorem:decreasing_case}, we can fill at least half the square.

Otherwise, since it is a split-layer permutation, the next layer only has one point.

Thus, the permutation starts with $(m, m - 1, \ldots, 2, 1, m + 1)$. Now, let $P$ be the dominating point of $\{P_1, P_2, \ldots, P_m\}$, and let $R$ be the rectangle with opposite vertices $P$ and $(1, 1)$. Consider an induced packing of $R$, and note that if the packing is maximal, the rectangle anchored at the origin of $R$ either reaches the top edge or the right edge of $R$. Thus, as shown in Figure~\ref{fig:layers_independent}, this rectangle can be incorporated into the rectangle anchored at either $P_1$ if it is vertical, or the rectangle anchored at $P_m$, if it is horizontal.

Now, note that the points in $R$, along with the lower left corner of $R$, form the split-layer case with $n - m$ points. Thus, by the inductive hypothesis, we can fill at least half of $R$ with an induced packing.

Therefore, by Lemma~\ref{lemma:split_layer_start}, we can fill at least half of the whole square.
\end{proof}

\section{Mountain case}
\label{sec:mountain}

Mountain permutations are those that are increasing, and then decreasing, such as $(1, 3, 5, 7, 8, 6, 4, 2)$.

In this section, we show that the maximum area in the mountain case is always greater than $\frac{1}{2}$.

\begin{lemma}
\label{lemma:end1}
Suppose the permutation starts with $m$, and ends with $(m - 1, m - 2, \ldots, 1)$, for some $m \leq n$ and we can fill at least $\frac{1}{2}$ of the rectangle with opposite vertices $P_1$ and $(x_{n - m + 1}, 1)$, as shown in Figure~\ref{fig:mountain_end}. Then, we can fill at least $\frac{1}{2}$ of the entire square.
\end{lemma}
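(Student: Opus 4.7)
The plan is to follow the same template as Lemma~\ref{lemma:split_layer_start}: use local minimality plus Lemma~\ref{lemma:maximal_at_origin} to extract coordinate relations, build an explicit three-piece packing, and verify the resulting lower bound by AM-GM and a two-variable calculus argument. By continuity of the maximum area, I may assume the configuration achieves the minimum and is therefore locally minimal. Since the permutation starts with $m$ and ends with $(m-1,m-2,\ldots,1)$, with all intermediate values exceeding $m$, the greedy decreasing subsequence is exactly $\{P_1,\,P_{n-m+1},P_{n-m+2},\ldots,P_{n-1}\}$, comprising $m$ points. These bound $m+1$ maximal rectangles at the origin, which by Lemma~\ref{lemma:maximal_at_origin} share a common area $c$. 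Equating areas gives
\[
c \;=\; x_1 \;=\; x_{n-m+1}\,y_1 \;=\; x_{n-m+2}\,y_{n-m+1} \;=\; \cdots \;=\; x_{n-1}\,y_{n-2} \;=\; y_{n-1}.
\]

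I would then assemble a packing from three disjoint regions: the rectangle $[0,x_1]\times[0,1]$ anchored at the origin, of area $c$; the induced packing on $R=[x_1,x_{n-m+1}]\times[y_1,1]$, covering at least $\tfrac{1}{2}(x_{n-m+1}-c)(1-c/x_{n-m+1}) = (x_{n-m+1}-c)^{2}/(2x_{n-m+1})$ by hypothesis; and the staircase to the right of $x = x_{n-m+1}$, filled by vertical strips anchored at $P_{n-m+1},\ldots,P_{n-1}$ as in the proof of Lemma~\ref{lemma:final_staircase}. Decomposing the staircase into vertical strips and telescoping using $y_i = c/x_{i+1}$, its area equals $1 - x_{n-m+1} - (m-1)c + cT$, where
\[
T \;=\; \frac{x_{n-m+1}}{x_{n-m+2}} + \frac{x_{n-m+2}}{x_{n-m+3}} + \cdots + \frac{x_{n-2}}{x_{n-1}} + x_{n-1}.
\]
The $m-1$ positive terms of $T$ have product $x_{n-m+1}$, so AM-GM yields $T \geq (m-1)\, x_{n-m+1}^{1/(m-1)}$.

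Setting $u = x_{n-m+1}$ and combining, the total filled area is at least
\[
F(c,u) \;=\; 1 - (m-1)c - \frac{u}{2} + \frac{c^{2}}{2u} + c(m-1)\, u^{1/(m-1)},
\]
and the lemma reduces to $F(c,u) \geq \tfrac{1}{2}$ on $0 \leq c \leq u \leq 1$. Setting $\partial F/\partial u = 0$ and rearranging gives $2c\, u^{m/(m-1)} = u^{2}+c^{2}$; since $u^{2}+c^{2} \geq 2uc$ by AM-GM, this forces $u \geq 1$, so no critical point lies in the open interval $(c,1)$. Direct computation shows $\partial F/\partial u = -1 + c^{1/(m-1)} \leq 0$ at $u=c$ and $\partial F/\partial u = -(1-c)^{2}/2 \leq 0$ at $u=1$, so the derivative is nonpositive throughout $[c,1]$ and $F$ is non-increasing in $u$. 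Consequently $F(c,u) \geq F(c,1) = \tfrac{1}{2} + c^{2}/2 \geq \tfrac{1}{2}$.

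The main obstacle will be the two-variable calculus above—specifically, verifying that the critical-point equation yields no feasible interior solution and that the derivative keeps its sign on all of $[c,1]$ for arbitrary $m \geq 2$. As a sanity check at the degenerate boundary $u = c$ (where $P_{n-m+1}$ would collapse onto $P_1$ and the configuration reduces to an all-decreasing one with $m$ non-origin points), $F(c,c) = 1 - (m-1)c(1-c^{1/(m-1)})$ has minimum $1-(1-1/m)^{m} > \tfrac{1}{2}$ because $(1-1/m)^{m} < 1/e$, in agreement with Theorem~\ref{theorem:decreasing_case}.
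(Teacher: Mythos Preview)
Your argument is correct, and the overall setup coincides with the paper's proof: you identify the same $m$ points in the greedy decreasing subsequence, apply Lemma~\ref{lemma:maximal_at_origin} to get the chain of equal areas, assemble the same three pieces (origin rectangle, half of $R$, final staircase), and use AM--GM on the $m-1$ ratio terms to arrive at the same two-variable lower bound $F(c,u)$. Under the substitution $u=x_{n-1}^{m-1}$ your $F(c,u)$ is exactly the paper's expression $1-(m-1)x_1+\tfrac{x_1^{2}}{2x_{n-1}^{m-1}}+(m-1)x_1x_{n-1}-\tfrac{1}{2}x_{n-1}^{m-1}$. The only genuine difference is the endgame. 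The paper first minimizes over $x_1$, obtaining $x_1=(m-1)x_{n-1}^{m-1}(1-x_{n-1})$, substitutes to reach $1-\tfrac{1}{2}x_{n-1}^{m-1}\bigl(1+(m-1)^{2}(1-x_{n-1})^{2}\bigr)$, and then does a somewhat involved derivative-and-factoring computation to show this is non-increasing in $x_{n-1}$ before evaluating at $x_{n-1}=1$. You instead show $\partial F/\partial u\le 0$ directly on $[c,1]$, via the clean observation that the critical-point equation $2c\,u^{m/(m-1)}=u^{2}+c^{2}$ together with $u^{2}+c^{2}\ge 2cu$ forces $u^{m/(m-1)}\ge u$ and hence $u\ge 1$; then $F(c,1)=\tfrac{1}{2}+\tfrac{c^{2}}{2}\ge\tfrac{1}{2}$ finishes immediately. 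Your route is shorter and sidesteps the paper's factoring step entirely.
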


\begin{proof}
First, assume that the maximum area is minimized. Note that the $m + 1$ maximal rectangles anchored at the origin are bounded by one of the $m$ points in the set $D = \{P_1, P_{n - m + 1}, P_{n - m + 2}, \ldots, P_{n - 1}\}$, as shown in Figure~\ref{fig:mountain_end}. Thus, by Lemma~\ref{lemma:maximal_at_origin}, we have that the areas of these rectangles are equal, so
\[x_1 = y_1x_{n - m + 1} = y_{n - m + 1}x_{n - m + 2} = \cdots = y_{n - 2}x_{n - 1} = y_{n - 1}.\]

\begin{figure}
    \centering
    \includegraphics[scale = 0.7]{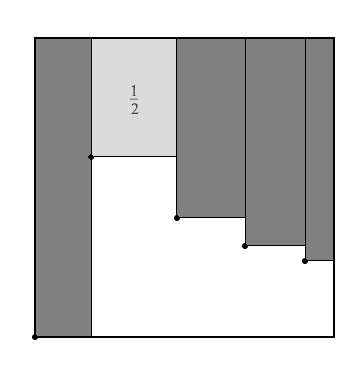}
    \caption{Example with $m = 4$}
    \label{fig:mountain_end}
\end{figure}

By a similar calculation as in the proof of Theorem~\ref{theorem:decreasing_case}, if we assume we can fill the entire staircase region defined by the points in $D$, the total area we can fill is
\[1 - mx_1 + x_1\left(\frac{x_1}{x_{n - m + 1}} + \frac{x_{n - m + 1}}{x_{n - m + 2}} + \cdots + \frac{x_{n - 2}}{x_{n - 1}} + x_{n - 1}\right).\]

However, we can only fill $\frac{1}{2}$ of the area of the rectangle with opposite vertices $P_1$ and $(x_{n - m + 1}, 1)$, so the total area we can fill is at least
\[1 - mx_1 + x_1\left(\frac{x_1}{x_{n - m + 1}} + \cdots + x_{n - 1}\right) - \frac{1}{2}\left(1 - \frac{x_1}{x_{n - m + 1}}\right)(x_{n - m + 1} - x_1).\]

Now, assuming that $x_1$ and $x_{n - m + 1}$ are fixed, we have from a similar argument as in the proof of Theorem~\ref{theorem:decreasing_case} that the area is minimized when
\[\frac{x_{n - m + 1}}{x_{n - m + 2}} = \cdots = \frac{x_{n - 2}}{x_{n - 1}} = x_{n - 1},\]
meaning that
\[x_{n - a} = x_{n - 1}^a\]
for $1 \leq a \leq m - 1$. In particular, we use the fact that $x_{n - m + 1} = x_{n - 1}^{m - 1}$.

Plugging this in and simplifying, we get that the area is
\[1 - (m - 1)x_1 + \frac{x_1^2}{2x_{n - 1}^{m - 1}} + (m - 1)x_1x_{n - 1} - \frac{1}{2}x_{n - 1}^{m - 1}.\]

The partial derivative of this expression with respect to $x_1$ is
\[-(m - 1) + \frac{x_1}{x_{n - 1}^{m - 1}} + (m - 1)x_{n - 1}.\]

This goes from negative to positive at
\[x_1 = (m - 1)x_{n - 1}^{m - 1}(1 - x_{n - 1}),\]
so the area is minimized at this value of $x_1$.

Plugging this into the expression for the area and simplifying, we get that the area is at least
\[1 - \frac{1}{2}x_{n - 1}^{m - 1}\left(1 + (m - 1)^2(1 - x_{n - 1})^2\right).\]

We now show that
\[x_{n - 1}^{m - 1}\left(1 + (m - 1)^2(1 - x_{n - 1})^2\right)\]
is a nondecreasing function of $x_{n - 1}$, by showing that its derivative is nonnegative.

This expression can be written as $(m - 1)^2x^{m - 1}(1 - x)^2 + x^{m - 1}$. Then, we see that its derivative is
\[(m - 1)^2\left((m - 1)x^{m - 2}(1 - x)^2 - 2x^{m - 1}(1 - x)\right) + (m - 1)x^{m - 2}.\]

We can factor this as
\[(m - 1)x^{m - 2}\left((m - 1)^2(1 - x)^2 - 2(m - 1)x(1 - x) + 1\right).\]

Because $(m - 1)x^{m - 2}$ is nonnegative, it suffices to show that
\[(m - 1)^2(1 - x)^2 - 2(m - 1)x(1 - x) + 1\]
is nonnegative. Since $x \leq 1$, we have
\begin{align*}
&(m - 1)^2(1 - x)^2 - 2(m - 1)x(1 - x) + 1 \\
&\geq (m - 1)^2(1 - x)^2 - 2(m - 1)(1 - x) + 1 \\
&= \left((m - 1)(1 - x) - 1\right)^2 \\
&\geq 0.
\end{align*}

Thus, we have that
\[x_{n - 1}^{m - 1}\left(1 + (m - 1)^2(1 - x_{n - 1})^2\right)\]
is nondecreasing, so we know that the area is a nonincreasing function of $x_{n - 1}$.

Since $0 \leq x_{n - 1} \leq 1$, we have that the minimum value is attained at $x_{n - 1} = 1$. Plugging in this value of $x_{n - 1}$, we get that the area is at least $\frac{1}{2}$.
\end{proof}

Using this result, we can resolve the mountain case.

\begin{theorem}
If the points follow a mountain permutation, we can always fill at least $\frac{1}{2}$ of the square.
\end{theorem}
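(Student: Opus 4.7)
The plan is to use strong induction on $n$, with the base case $n = 1$ being immediate (fill the entire square with a single rectangle at the origin).

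For the inductive step, let $m = \pi(1)$. A key structural observation is that in any mountain permutation with $\pi(1) = m$, the last $m - 1$ entries must be exactly $(m - 1, m - 2, \ldots, 1)$ in that order. Indeed, each of the values $1, 2, \ldots, m - 1$ is less than $m = \pi(1)$, so none of them can appear in the strictly increasing portion of the mountain; they must all lie in the strictly decreasing tail after the peak. Since that tail is strictly decreasing, these $m - 1$ smallest values have to occupy the last $m - 1$ positions in decreasing order.

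If $m = 1$, then the points $P_2, \ldots, P_{n-1}$ together with $P_1$ as the ``origin'' of the rectangle above and to the right of $P_1$ form a mountain permutation on $n - 1$ points. By the inductive hypothesis together with Lemma~\ref{lemma:scaling}, this sub-rectangle admits a packing filling at least half of its area, and Lemma~\ref{lemma:start1} with $k = \tfrac{1}{2}$ then yields a packing of the whole square with area at least $\tfrac{1}{2}$.

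If $m \geq 2$, the structural observation above lets us invoke Lemma~\ref{lemma:end1}. It suffices to pack at least half the area of the rectangle $R$ with opposite vertices $P_1$ and $(x_{n-m+1}, 1)$. The points strictly inside $R$ are precisely the middle points $P_2, \ldots, P_{n-m}$, and together with $P_1$ as the lower-left corner of $R$ they form a mountain permutation on $n - m$ points (the increasing-then-decreasing structure survives removal of the first point and the forced tail). Since $m \geq 2$, this is a strictly smaller instance, so the inductive hypothesis combined with Lemma~\ref{lemma:scaling} gives the required half-filled packing of $R$. Lemma~\ref{lemma:end1} then delivers the bound of $\tfrac{1}{2}$ for the whole square.

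The heart of the argument already lives in Lemma~\ref{lemma:end1}, which did the technical optimization of absorbing the forced decreasing tail. The inductive step itself is essentially a reduction: verify that what remains in the middle of a mountain is again a mountain, and appeal to the hypothesis. I do not anticipate any further obstacle once the structural observation about the forced tail is in hand.
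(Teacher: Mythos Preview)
Your proposal is correct and follows essentially the same approach as the paper: strong induction on $n$, splitting according to whether $\pi(1)=1$ (your $m=1$ case, the paper's ``starts with $1$'') or $\pi(1)=m\ge 2$ (forcing the tail $(m-1,\ldots,1)$, the paper's ``ends with $1$''), then invoking Lemma~\ref{lemma:start1} or Lemma~\ref{lemma:end1} respectively on the smaller mountain instance. Your explicit justification that the values $1,\ldots,m-1$ must occupy the decreasing tail is a nice touch that the paper leaves more implicit.
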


\begin{proof}
We prove this by strong induction on $n$. For our base case of $n = 1$, we can fill the whole square.

Now, consider the mountain case with $n$ points. We have 2 cases: the permutation starts with 1, or the permutation ends with 1.

\paragraph*{Case 1: permutation starts with 1 \\}

Consider the region above and to the right of $P_1$. Note that the points within it form exactly the mountain case with $n - 1$ points, so we have by the inductive hypothesis that we can fill at least $\frac{1}{2}$ of this region.

Then, by Lemma~\ref{lemma:start1}, we can fill at least $\frac{1}{2}$ of the whole square.

\paragraph*{Case 2: permutation ends with 1 \\}

Note that if the permutation starts with $m$, then it must end with $(m - 1, m - 2, \ldots, 1)$. Now, consider the rectangular region with opposite vertices $P_1$ and $(x_{n - m + 1}, 1)$. 
The points in this region, including those on the border, form the mountain case with $n - m$ points. Thus, by the inductive hypothesis, we can fill at least $\frac{1}{2}$ of this region.

Then, by Lemma~\ref{lemma:end1}, we can fill at least $\frac{1}{2}$ of the whole square.
\end{proof}

It seems like among all permutations, increasing permutations have the smallest possible optimal area, as stated in the Precise Conjecture~\ref{conjecture:precise}.

Thus, we also propose Conjecture~\ref{conjecture:mountain_end_minimum}, which implies that the area of a mountain permutation with $n$ points is at least $\frac{1}{2} + \frac{1}{2n}$.

\begin{conjecture}
\label{conjecture:mountain_end_minimum}
Suppose the permutation starts with $m$, and ends with $(m - 1, m - 2, \ldots, 1)$, for some $m \leq n$ and we can fill a proportion $k > \frac{1}{2}$ of the rectangle with opposite vertices $P_1$ and $(x_{n - m + 1}, 1)$, as shown in Figure~\ref{fig:mountain_end}. Then, we can fill at least $\frac{1}{2} + \frac{1}{2\left(m + \frac{1}{2k - 1}\right)}$ of the entire square.
\end{conjecture}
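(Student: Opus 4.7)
The plan is to mirror the proof of Lemma~\ref{lemma:end1} almost line-for-line, replacing the constant $\tfrac{1}{2}$ used there for the fraction of $R$ that is filled by the parameter $k$, and then work out a sharper tail estimate at the end. As a sanity check on the target bound, notice that setting $k = \tfrac{1}{2}$ gives $\tfrac{1}{2} + \tfrac{1}{2(m+\infty)} = \tfrac{1}{2}$, which recovers Lemma~\ref{lemma:end1}, and setting $m=1$ gives $\tfrac{1}{2} + \tfrac{2k-1}{4k} = 1 - \tfrac{1}{4k}$, matching Lemma~\ref{lemma:start1}; so the conjectured bound interpolates cleanly between these two extreme cases.

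I would begin by assuming, as in Lemma~\ref{lemma:end1}, that the configuration is locally minimal, and apply Lemma~\ref{lemma:maximal_at_origin} to the $m$ maximal rectangles anchored at the origin that are bounded by $\{P_1, P_{n-m+1}, \ldots, P_{n-1}\}$. This gives the chain $x_1 = y_1 x_{n-m+1} = y_{n-m+1} x_{n-m+2} = \cdots = y_{n-1}$. Then, exactly as in Lemma~\ref{lemma:end1}, the AM-GM inequality on $\tfrac{x_{n-m+1}}{x_{n-m+2}} + \cdots + \tfrac{x_{n-2}}{x_{n-1}} + x_{n-1}$ shows that at the minimum $x_{n-a} = v^a$ for $a = 1, \ldots, m-1$, where $v := x_{n-1}$. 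Letting $u := x_1$ and subtracting the at most $(1-k)$-fraction of the sub-rectangle $R$ with corners $P_1$ and $(v^{m-1}, 1)$, the total packing area reduces to
\[
A(u,v) = 1 - (m - 2 + 2k)u + \frac{k u^2}{v^{m-1}} + (m-1)uv - (1-k)v^{m-1}.
\]

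Next, I would optimize $A$ over $u$ by solving $\partial A/\partial u = 0$, which yields $u^* = \tfrac{v^{m-1}\big[(m-2+2k) - (m-1)v\big]}{2k}$ (one checks this lies in the feasible range $[0, v^{m-1}]$ for $v \in [0,1]$; the remaining corner cases reduce either to the pure decreasing case, handled by Theorem~\ref{theorem:decreasing_case}, or to $u = 0$, where the bound is easy). Substitution gives
\[
A^*(v) = 1 - v^{m-1}\left( \frac{\big[(m-2+2k) - (m-1)v\big]^2}{4k} + (1-k) \right).
\]
The substitution $w = 1 - v$ then makes the key factor inside the square become $(2k-1) + (m-1)w$, exposing the role of $c := 2k - 1$; this matches why the bound is naturally phrased in terms of $s = 1/c$.

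All that remains is the inequality $A^*(1-w) \geq \tfrac{1}{2} + \tfrac{1}{2(m + 1/c)}$ for $w \in [0,1]$, which after clearing denominators is equivalent to a polynomial inequality of the shape
\[
(1-w)^{m-1}\Big[(c + (m-1)w)^2 + (1 - c^2)\Big] \leq \frac{(c+1)\big((m-1)c + 1\big)}{mc + 1}.
\]
I expect this to be the main obstacle. The right-hand side simplifies to $1 + \tfrac{(m-1)c^2}{mc+1}$, so the inequality holds trivially at $w = 0$. In general, setting the derivative of the left-hand side in $w$ to zero produces a quadratic in $(m-1)w$, and I would solve it explicitly and plug back in, showing the resulting maximum of the left-hand side equals exactly $\tfrac{(c+1)((m-1)c+1)}{mc+1}$. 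An alternative more elegant route worth trying is to recognize the left-hand side as a product of the form $(1-w)^{m-1} \cdot Q(w)$ with $Q$ quadratic, and control its maximum by a weighted AM-GM on $m+1$ terms, tuned so that the equality case gives the critical point $w = \tfrac{1 - (m-1)c}{\text{something}}$ in terms of $m$ and $c$; the appearance of $m + 1/c$ in the denominator of the conjectured bound strongly suggests that such a weighted AM-GM with weights $1, 1, \ldots, 1, 1/c$ (total mass $m + 1/c$) is the right instrument.
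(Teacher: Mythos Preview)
The paper does \emph{not} prove this statement---it is explicitly left as a conjecture. What the paper does is exactly the reduction you carry out: following the computation of Lemma~\ref{lemma:end1} with $k$ in place of $\tfrac{1}{2}$, it arrives at the equivalent two-variable inequality
\[
k\frac{x^2}{y} + (m - 1)x\,y^{1/(m-1)} + \left(\tfrac{1}{2} - \tfrac{1}{2\left(m + \frac{1}{2k - 1}\right)}\right) - (m - 2 + 2k)x - (1 - k)y \ge 0
\]
for $0\le x\le y\le 1$, and then stops, noting only that this is ``strongly supported by numerical evidence.'' Your derivation of $A(u,v)$ and the further optimization in $u$ to reach the one-variable form in $w$ are correct and are simply a sharpening of the same reduction (with $y=v^{m-1}$, $x=u$). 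So up to that point you are aligned with, and slightly ahead of, the paper.

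The genuine gap is in your final step. You assert that after solving the quadratic for the critical $w$ and substituting back, ``the resulting maximum of the left-hand side equals exactly $\tfrac{(c+1)((m-1)c+1)}{mc+1}$.'' This is false. Take $m=2$, $c=1$ (i.e.\ $k=1$): the left-hand side becomes $(1-w)(1+w)^2$, whose maximum on $[0,1]$ is $32/27$ at $w=1/3$, while the right-hand side is $4/3=36/27$. The inequality is \emph{strict}, which is consistent with the fact that for $m=2$, $k=1$ the configuration reduces to the decreasing case with three points, whose minimum area is $19/27$, strictly above the conjectured bound $2/3$. More generally the critical $w$ does not have a clean closed form (the discriminant of the quadratic in $z=c+(m-1)w$ is $c^2(m)^2-2(m-1)(1-c)$, which does not factor), so a direct ``plug in and check equality'' argument cannot work. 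The weighted AM--GM idea might still salvage the inequality as a genuine $\le$, but as written the plan rests on an equality that does not hold; proving the conjecture requires controlling the gap, not closing it.
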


A computation similar to the one in Lemma~\ref{lemma:end1} shows that Conjecture~\ref{conjecture:mountain_end_minimum} is equivalent to the inequality
\[k\frac{x^2}{y} + (m - 1)x\sqrt[m - 1]{y} + \left(\frac{1}{2} - \frac{1}{2\left(m + \frac{1}{2k - 1}\right)}\right) - (m - 2 + 2k)x - (1 - k)y \geq 0,\]
for $k \geq \frac{1}{2}$, $m \geq 2$, and $0 \leq x \leq y \leq 1$, which is strongly supported by numerical evidence.

\begin{theorem}
Assuming Conjecture~\ref{conjecture:mountain_end_minimum} is true, then in the mountain case, we can fill at least $\frac{1}{2} + \frac{1}{2n}$ of the square.
\end{theorem}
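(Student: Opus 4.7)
The plan is to prove this by strong induction on $n$, following the same structure as the unconditional mountain theorem but propagating the sharper bound $\frac{1}{2} + \frac{1}{2n}$ through each step. The base case $n = 1$ is immediate. For the inductive step I split on whether the mountain permutation starts with $1$ or ends with $1$, which is exhaustive since a mountain permutation has $1$ at one of its endpoints.

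In the case where the permutation starts with $1$, the points above and to the right of $P_1$ form a mountain permutation on $n - 1$ points, so by the inductive hypothesis we can fill at least $k = \frac{1}{2} + \frac{1}{2(n-1)} = \frac{n}{2(n-1)}$ of that region. Since $k \geq \frac{1}{2}$, Lemma~\ref{lemma:start1} gives a filled proportion of at least $1 - \frac{1}{4k}$ of the whole square, and a short computation gives
\[
1 - \frac{1}{4k} \;=\; 1 - \frac{n-1}{2n} \;=\; \frac{1}{2} + \frac{1}{2n},
\]
closing this case.

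In the case where the permutation ends with $1$, it must start with some value $m$ and end with the decreasing run $(m-1, m-2, \ldots, 1)$. The rectangular region with opposite vertices $P_1$ and $(x_{n-m+1}, 1)$ contains (together with $P_1$) a mountain permutation on $n - m$ points, so by the inductive hypothesis we can fill at least $k = \frac{1}{2} + \frac{1}{2(n-m)}$ of that rectangle. For this value of $k$ we have $\frac{1}{2k-1} = n - m$, so Conjecture~\ref{conjecture:mountain_end_minimum} yields a filled fraction of at least
\[
\frac{1}{2} + \frac{1}{2\left(m + \tfrac{1}{2k-1}\right)} \;=\; \frac{1}{2} + \frac{1}{2(m + (n-m))} \;=\; \frac{1}{2} + \frac{1}{2n},
\]
which completes the induction.

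Once the conjecture is granted, the argument is essentially mechanical: both cases reduce to the observation that the parameter $m + \frac{1}{2k-1}$ appearing in Conjecture~\ref{conjecture:mountain_end_minimum} telescopes under the induction to give exactly $n$, and that Lemma~\ref{lemma:start1}'s $1 - \frac{1}{4k}$ does the same when $k = \frac{1}{2} + \frac{1}{2(n-1)}$. The genuine obstacle is therefore not in the induction itself but hidden inside Conjecture~\ref{conjecture:mountain_end_minimum} — in particular, verifying the two-variable inequality displayed just before the theorem; the inductive scaffolding above only shows that this inequality is precisely the ingredient needed to lift the $\frac{1}{2}$ bound to the Precise Conjecture bound.
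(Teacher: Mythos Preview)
Your proof is correct and follows essentially the same approach as the paper: strong induction on $n$, splitting into the two cases according to whether the mountain permutation starts or ends with $1$, invoking Lemma~\ref{lemma:start1} in the first case and Conjecture~\ref{conjecture:mountain_end_minimum} in the second. The only cosmetic difference is that the paper phrases Case~2 via the inequality $n \geq m + \frac{1}{2k-1}$ and then uses monotonicity of the bound in $k$, whereas you plug in the exact inductive lower bound for $k$ and obtain equality directly; both are valid readings of the conjecture's hypothesis.
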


\begin{proof}
We prove this by induction on $n$. For our base case $n = 1$, we can fill the entire square, which is equal to $\frac{1}{2} + \frac{1}{2n} = 1$.

Now, for larger $n$, the permutation either starts with $1$, or ends with $(m - 1, m - 2, \ldots, 1)$ for some $m \geq 2$.

\paragraph*{Case 1: permutation starts with 1 \\}
The rectangular region with opposite vertices $P_1$ and $(1, 1)$ is a smaller copy of the mountain case with $n - 1$ points. By the inductive hypothesis, we can fill at least $\frac{1}{2} + \frac{1}{2(n - 1)}$ of this region. Then, by Lemma~\ref{lemma:start1}, we can fill at least $\frac{1}{2} + \frac{1}{2n}$ of the whole square.

\paragraph*{Case 2: permutations ends with $(m - 1, m - 2, \ldots, 1)$ \\}
There will be $n - m$ points in the rectangular region $R$ with opposite corners $P_1$ and $\left(x_{n - m + 1}, 1\right)$, and note that the points in $R$ form the mountain case with $n - m$ points. By the inductive hypothesis, if $k$ is the proportion of $R$ that we can fill, then $k \geq \frac{1}{2} + \frac{1}{2(n - m)}$, or equivalently, $n \geq m + \frac{1}{2k - 1}$.

Then assuming Conjecture~\ref{conjecture:mountain_end_minimum}, we can fill at least $\frac{1}{2} + \frac{1}{2\left(m + \frac{1}{2k - 1}\right)}$ of the whole square. As $n \geq m + \frac{1}{2k - 1}$, we have that this expression is at least $\frac{1}{2} + \frac{1}{2n}$, completing the inductive step.
\end{proof}

\section{Four dots}
\label{sec:four_dots}

We calculated the minimal area for all 6 possible permutations when $n = 4$.
There are six possible permutations of the three non-origin points. Two are covered by the increasing and decreasing cases. $(1, 3, 2)$ is a cliff permutation, $(2, 1, 3)$ corresponds to a split-layer permutation, $(2, 3, 1)$ corresponds to a mountain, and $(3, 1, 2)$ is the inverse of $(2, 3, 1)$.

\subsection{Permutation (1, 2, 3)}

This is the all increasing case with $n = 4$, so by Theorem~\ref{theorem:increasing_case}, the minimum area is $\frac{5}{8}$.

\subsection{Permutation (1, 3, 2)}
This is the cliff case with $n = 4$ and $m = 1$, so by Corollary~\ref{corollary:cliff_case}, we can fill at least $\frac{49}{76}$ of the square.

\subsection{Permutation (2, 1, 3)}
\begin{theorem}
\label{theorem:213}
When $n = 4$ and the permutation is $(2, 1, 3)$, the minimum area is $\frac{837 - 11\sqrt{33}}{1152}$, and this bound is tight when $S = \{(x^2, x), (x, x^2), (y, y)\}$, where $x = \frac{9 + \sqrt{33}}{24}$ and $y = \frac{33 + \sqrt{33}}{48}$.
\end{theorem}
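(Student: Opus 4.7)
The permutation $(2,1,3)$ is a split-layer permutation whose first layer is the decreasing run $(2,1)$ and whose second layer is the singleton $(3)$; since $y_3$ exceeds all earlier $y$-coordinates, $P_3$ is a splitting point. The plan is to assume a locally minimal configuration, apply Lemma~\ref{lemma:maximal_at_origin} twice (once at the origin of the unit square and once inside the subrectangle above the first layer), pin down four of the five coordinate degrees of freedom, then express the area as a function of the remaining two variables and solve a small polynomial system.

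First I would use Theorem~\ref{theorem:independent_layers} to decouple the problem: an optimal packing can be taken to consist of rectangles anchored at $P_0, P_1, P_2$ tiling the L-shape outside $R = [x_2, 1] \times [y_1, 1]$, together with an induced maximal packing of $R$ using only the corner of $R$ and $P_3$. Next, applying Lemma~\ref{lemma:maximal_at_origin} at the origin of the unit square, where the greedy decreasing subsequence is $(P_1, P_2)$, yields three equal maximal-rectangle areas $x_1 = x_2 y_1 = y_2$, and hence $P_1 = (x_1, x_1/x_2)$ and $P_2 = (x_2, x_1)$. Applying Lemma~\ref{lemma:maximal_at_origin} a second time to the induced two-dot packing inside $R$ forces $P_3$ to the center of $R$, giving $x_3 = (1+x_2)/2$ and $y_3 = (1+y_1)/2$, with that induced packing filling exactly $\tfrac{3}{4}$ of $R$ (by the two-dot analysis of the Examples subsection).

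With these constraints the total filled area becomes
\[
A(x_1, x_2) = 1 - 2x_1 + \frac{x_1^2}{x_2} + x_1 x_2 - \frac{1}{4}(1 - x_2)\left(1 - \frac{x_1}{x_2}\right),
\]
where the first four terms come from the formula in Theorem~\ref{theorem:decreasing_case} applied to the two-point decreasing layer (as if $R$ were fully filled) and the last term subtracts the quarter of $R$ that the two-dot packing necessarily leaves empty. I expect $\partial A / \partial x_2 = 0$ to factor cleanly after clearing the $x_2^2$ denominator as $(x_1 + 1/4)(x_2^2 - x_1) = 0$, forcing $x_1 = x_2^2$; substituting this into $\partial A / \partial x_1 = 0$ then reduces to the quadratic $12 x_2^2 - 9 x_2 + 1 = 0$ with roots $x_2 = (9 \pm \sqrt{33})/24$. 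Along this critical curve the area simplifies to the linear expression $(117 - 33 x_2)/144$, so the minimum is attained at the larger root $x = (9+\sqrt{33})/24$; plugging back recovers $P_1 = (x^2, x)$, $P_2 = (x, x^2)$, $P_3 = ((1+x)/2, (1+x)/2) = (y,y)$ with $y = (33+\sqrt{33})/48$, and using $12 x^2 = 9 x - 1$ to simplify $(117 - 33x)/144$ yields the claimed value $\tfrac{837 - 11\sqrt{33}}{1152}$.

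The main obstacle is the bookkeeping for the two-variable optimization: the cancellation in $\partial A / \partial x_2$ that produces the clean factor $x_2^2 - x_1$ is the critical structural observation, and the final rationalization from $(117 - 33x)/144$ at the irrational $x$ down to $(837 - 11\sqrt{33})/1152$ is routine but easy to botch. I would also need to verify that boundary configurations (where $x_1 = 0$, $x_1 = x_2$, $x_2 = 1$, or $P_3$ collapses onto a corner of $R$) do not beat the interior critical point, but in each such case the configuration degenerates to one covered by the increasing, prelayered, or decreasing analyses of earlier sections, each of which gives strictly larger area than the critical value $\approx 0.6712$.
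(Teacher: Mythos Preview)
Your proposal is correct and follows essentially the same approach as the paper: decouple via Theorem~\ref{theorem:independent_layers}, use Lemma~\ref{lemma:maximal_at_origin} to reduce to two variables, subtract one quarter of the area of $R$, and optimize. Your factorization $(x_1+\tfrac14)(x_2^2-x_1)=0$ of the $\partial A/\partial x_2$ equation and the resulting linear expression $(117-33x_2)/144$ at the critical points are a cleaner route to the two candidate minima than the paper's bare ``they are both zero at two points,'' and your explicit derivation of $P_3=(y,y)$ fills in a detail the paper leaves implicit.
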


\begin{proof}
First, note that $P_3$ is a splitting point, and let $R$ be the rectangular region above and to the right of $\dom\{P_1, P_2\}$. By Theorem~\ref{theorem:independent_layers}, we can pack $R$ independently from the rest of the unit square. Along with the lower left corner of $R$, the points in $R$ form the increasing case with 2 points, so we can fill at least $\frac{3}{4}$ of $R$.

Also, by Lemma~\ref{lemma:maximal_at_origin}, we have $x_1 = x_2y_1 = y_2$, so $y_1 = \frac{x_1}{x_2}$ and $y_2 = x_1$.

We will now compute the area. If we assume that we can fill all of the staircase region defined by $P_1$ and $P_2$, including all of $R$, then the area would be $1 - 2x_1 + \frac{x_1^2}{x_2} + x_1x_2$, by a similar computation as in the decreasing case, in Section~\ref{sec:decreasing}. The area of $R$ is $(1 - x_2)(1 - y_1) = 1 - \frac{x_1}{x_2} - x_2 + x_1$. Subtracting a quarter of the area of $R$ from the previous expression, we see that the total area is
\[\frac{3}{4} - \frac{9}{4}x_1 + \frac{1}{4}x_2 + \frac{x_1^2}{x_2} + x_1x_2 + \frac{x_1}{4x_2}.\]

The partial derivative of the area with respect to $x_1$ is $-\frac{9}{4} + \frac{2x_1}{x_2} + x_2 + \frac{1}{4x_2}$, and the partial derivative with respect to $x_2$ is $\frac{1}{4} - \frac{x_1^2}{x_2^2} + x_1 - \frac{x_1}{4x_2^2}$.

They are both zero at two points: $(x_1, x_2) = \left(\frac{1}{96}(19 - 3\sqrt{33}, \frac{1}{24}(9 - \sqrt{33}\right)$, and $(x_1, x_2) = \left(\frac{1}{96}(19 + 3\sqrt{33}, \frac{1}{24}(9 + \sqrt{33}\right)$.

Of these local minima, the second one gives the lower value of about 0.67171, versus the value of 0.78142 produced by the first solution.

\begin{figure}
    \centering
    \includegraphics[scale = 0.7]{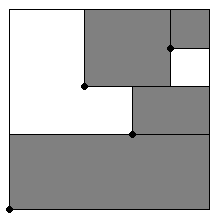}
    \caption{The set of points that minimizes the maximal area in the permutation $(2, 1, 3)$}
    \label{fig:213_optimal}
\end{figure}

Thus, the minimal area is $\frac{837 - 11\sqrt{33}}{1152}$, when
\[(x_1, x_2) = \left(\frac{1}{96}(19 + 3\sqrt{33}, \frac{1}{24}(9 + \sqrt{33}\right),\]
as shown in Figure~\ref{fig:213_optimal}.
\end{proof}

\begin{remark}
The points in this case all have irrational coordinates, unlike in previous cases. This shows that not all minimal configurations have rational points.
\end{remark}

\subsection{Permutation (2, 3, 1)}
We find the minimum area and equality case when the permutation is $(2, 3, 1)$.

\begin{theorem}
\label{theorem:start2_end1}
If the permutation starts with 2 and ends with 1, and we can fill a proportion $k$ of the rectangle with opposite vertices $P_1$ and $(x_{n - 1}, 1)$, then if $k \geq \frac{3}{4}$, we can fill at least
\[1 - \frac{4k}{3} + \frac{32k^2}{27} + \left(\frac{2}{9} - \frac{8k}{27}\right)\sqrt{16k^2 - 12k}\]
of the square, and if $k \leq \frac{3}{4}$, we can fill at least
\[1 - \frac{1}{4k}\]
of the square.
\end{theorem}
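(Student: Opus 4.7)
The plan is to parallel the proof of Lemma~\ref{lemma:end1} with $m = 2$, but keeping $k$ as a free parameter rather than specializing to $\tfrac12$. I first assume the configuration attains the minimum area, so Lemma~\ref{lemma:maximal_at_origin} applies. Since $\pi(1) = 2$ and $\pi(n-1) = 1$ are the two smallest values of the permutation, the greedy decreasing subsequence equals $\{P_1, P_{n-1}\}$, and the three maximal rectangles at the origin (of areas $x_1$, $x_{n-1}y_1$, and $y_{n-1}$) are equal; this gives $y_1 = x_1/x_{n-1}$ and $y_{n-1} = x_1$. Writing $a = x_1$ and $b = x_{n-1}$, the rectangle $R$ with opposite vertices $P_1$ and $(x_{n-1},1)$ becomes $[a,b]\times[a/b,1]$. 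Invoking Lemma~\ref{lemma:final_staircase} for the single-point final run at $P_{n-1}$ and using the hypothesis that only a proportion $k$ of $R$ is fillable, the area bookkeeping from Lemma~\ref{lemma:end1} (with the factor $\tfrac12$ replaced by $1-k$) reduces to
\[A(a,b) = 1 - 2ka - (1-k)b + ab + \frac{ka^2}{b}.\]
It then suffices to minimize $A$ over the feasible region $0 \leq a \leq b \leq 1$.

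For an interior critical point, $\partial A/\partial a = 0$ yields $b^2 = 2k(b-a)$, i.e.\ $a = b - b^2/(2k)$; substituting into $\partial A/\partial b = 0$ reduces to the quadratic $3b^2 - 8kb + 4k = 0$, whose discriminant is $16k(4k-3)$. Hence an interior critical point exists only when $k \geq \tfrac34$, and a short check shows that of the two roots $b = (4k \pm 2\sqrt{k(4k-3)})/3$, only the smaller one lies in $[0,1]$ for $k > \tfrac34$. Combining $a = b - b^2/(2k)$ with the quadratic produces the clean identity $a = (2-b)/3$, after which substituting back into $A$ and repeatedly applying the quadratic relation collapses the expression to the linear form
\[A = 1 - \frac{4k}{9} + \frac{2b(4k-3)}{9}.\]
Plugging in the smaller root and simplifying (using $\sqrt{16k^2-12k} = 2\sqrt{k(4k-3)}$) then matches the claimed expression. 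The main obstacle of the proof is precisely this algebraic reduction to the linear-in-$b$ form, since direct substitution without exploiting the quadratic relation produces an unwieldy expression.

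For $k \leq \tfrac34$ the function $A$ has no interior critical point in the feasible region, so its minimum is attained on the boundary. On the edge $b=1$, $A(a,1) = k + (1-2k)a + ka^2$ is a convex quadratic in $a$, minimized at $a = (2k-1)/(2k)$ with value $1 - 1/(4k)$ when $k \geq \tfrac12$, and at $a=0$ with value $k \geq 1-1/(4k)$ otherwise. The remaining boundary pieces are easily dominated: $A(0,b) = 1 - (1-k)b \geq k \geq 1-1/(4k)$, $A(a,a) = 1 - a + a^2 \geq \tfrac34 \geq 1-1/(4k)$, and $A(a,b) \to \infty$ as $b \to 0^+$ with $a$ fixed positive. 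Hence the global minimum for $k \leq \tfrac34$ is $1 - 1/(4k)$, completing the argument.
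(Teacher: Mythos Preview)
Your proof is correct and follows essentially the same approach as the paper's: both use Lemma~\ref{lemma:maximal_at_origin} to reduce to the two-variable area function $A(x_1,x_{n-1}) = 1 - 2kx_1 - (1-k)x_{n-1} + x_1x_{n-1} + kx_1^2/x_{n-1}$, solve $\partial A/\partial x_1 = 0$ for $x_1$, and then obtain the same quadratic in $x_{n-1}$ whose discriminant splits the cases at $k = \tfrac34$. The only cosmetic differences are that you collapse the critical value to the linear form $1 - \tfrac{4k}{9} + \tfrac{2b(4k-3)}{9}$ via the identity $a = (2-b)/3$ (the paper instead substitutes into the cubic $1 - b + b^2 - b^3/(4k)$), and for $k \le \tfrac34$ you check the boundary $b=1$ directly rather than invoking Lemma~\ref{lemma:start1}.
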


\begin{proof}
Note that $P_{n - 1}$ is in the final decreasing run, and the rectangles anchored at points in the final decreasing run form the staircase region. Since, by Lemma~\ref{lemma:final_staircase}, the staircase region is always filled when the area is maximized, we may assume that the rectangle anchored at $P_{n - 1}$ has $(1, 1)$ as its upper right corner, as shown in Figure~\ref{fig:231_diagram}.

\begin{figure}
    \centering
    \includegraphics[scale = 0.5]{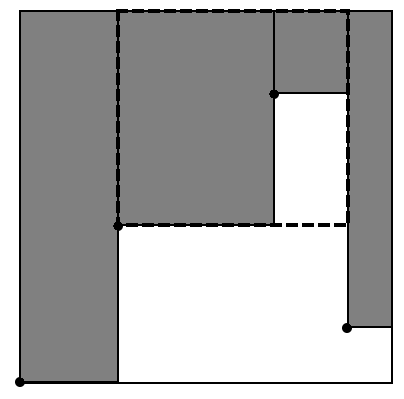}
    \caption{The highlighted region is equivalent to the 2 dots increasing case, so $k = \frac{3}{4}$ in this example.}
    \label{fig:231_diagram}
\end{figure}

Now, let $R$ be the rectangle with opposite vertices $P_1$ and $(x_{n - 1}, 1)$, shown as the dashed rectangle in Figure~\ref{fig:231_diagram}. Note that any packing on these points includes an induced packing of rectangle $R$, and we can always fill a proportion $k$ of $R$.

By Lemma~\ref{lemma:maximal_at_origin}, we have $x_1 = y_1x_{n - 1} = y_{n - 1}$, so $y_1 = \frac{x_1}{x_{n - 1}}$ and $y_{n - 1} = x_1.$

Thus, the area of the rectangle anchored at the origin is $x_1$, the area of the rectangle anchored at $P_{n - 1}$ has area $(1 - x_{n - 1})(1 - y_{n - 1}) = (1 - x_{n - 1})(1 - x_1)$, and the area of rectangle $R$ is $(x_{n - 1} - x_1)(1 - y_1) = (x_{n - 1} - x_1)\left(1 - \frac{x_1}{x_{n - 1}}\right).$

Therefore, the total area we can fill is
\[x_1 + (1 - x_{n - 1})(1 - x_1) + k(x_{n - 1} - x_1)\left(1 - \frac{x_1}{x_{n - 1}}\right).\]
This expression is equal to
\[1 - 2kx_1 - (1 - k)x_{n - 1} + k\frac{x_1^2}{x_{n - 1}} + x_1x_{n - 1}.\]

Taking the partial derivative with respect to $x_1$ and setting it to 0, we get
\[-2k + 2k\frac{x_1}{x_{n - 1}} + x_{n - 1} = 0.\]
Solving for $x_1$, we get
\[x_1 = x_{n - 1}\left(1 - \frac{x_{n - 1}}{2k}\right),\]
and we can check that the partial derivative goes from negative to positive at this value of $x_1$.

Taking the partial derivative of the area with respect to $x_{n - 1}$, we see that
\[-(1 - k) - k\frac{x_1^2}{x_{n - 1}^2} + x_1 = 0.\]
Substituting $x_1 = x_{n - 1}\left(1 - \frac{x_{n - 1}}{2k}\right)$, we see that
\[-1 + k - k\left(1 - \frac{x_{n - 1}}{2k}\right)^2 + x_{n - 1}\left(1 - \frac{x_{n - 1}}{2k}\right) = 0.\]
Expanding and collecting like terms, we get
\[\frac{3}{4k}x_{n - 1}^2 - 2x_{n - 1} + 1 = 0.\]

Solving for $x_{n - 1}$, and using the fact that $x_{n - 1} < 1$, we get
\[x_{n - 1} = \frac{4k - \sqrt{16k^2 - 12k}}{3}.\]
Note that the discriminant is negative if $k < \frac{3}{4}$. That means that the partial derivative with respect to $x_{n - 1}$ is always negative, meaning the minimum area is attained at $x_{n - 1} = 1$. Also, if $k = \frac{3}{4}$, then we get $x_{n - 1} = 1$.

Since $x_{n - 1} = 1$, any rectangle anchored at it has 0 area. Thus, it is equivalent to there being $n - 1$ points satisfying a permutation that starts with 1. Furthermore, we can fill a proportion $k$ of the area of the rectangle with opposite vertices $P_1$ and $(x_{n - 1}, 1) = (1, 1)$.

Therefore, we can apply Lemma~\ref{lemma:start1} to see that we can fill at least $1 - \frac{1}{4k}$ of the area of the whole square.

Now, assume $k \geq \frac{3}{4}$. If we plug in $x_1 = x_{n - 1}\left(1 - \frac{x_{n - 1}}{2k}\right)$ into our formula for the area, we get
\[1 - x_{n - 1} + x_{n - 1}^2 - \frac{x_{n - 1}^3}{4k}.\]

Plugging in $x_{n - 1} = \frac{4k - \sqrt{16k^2 - 12k}}{3}$, we get the minimum area is
\[1 - \frac{4k}{3} + \frac{32k^2}{27} + \left(\frac{2}{9} - \frac{8k}{27}\right)\sqrt{16k^2 - 12k}.\]
\end{proof}

A graph of the minimum area for various values of $k$ is shown in Figure~\ref{fig:start2end1_graph}. Note that it is a piecewise function, split between $k \leq \frac{3}{4}$ and $k \geq \frac{3}{4}$. This function is continuous, as both sides give $\frac{2}{3}$ for $k = \frac{3}{4}$. In fact, it is differentiable, as both sides have derivative $\frac{4}{9}$ at $k = \frac{3}{4}$. It is not twice-differentiable, as the right side has no second derivative.

\begin{figure}
\centering
\begin{tikzpicture}
\begin{axis}[xlabel = $k$, ylabel = Minimum area, ymin = 0.4, ymax = 0.8]
\addplot[domain = 0.5:0.75, color = black]{1 - 1/(4*x)};
\addplot[domain = 0.75:1, color = black]{1 - 4*x/3 + 32*x^2/27 + (2/9 - 8*x/27)*sqrt(16*x^2 - 12*x)};
\end{axis}
\end{tikzpicture}
\caption{The minimum area for various values of $k$}
\label{fig:start2end1_graph}
\end{figure}
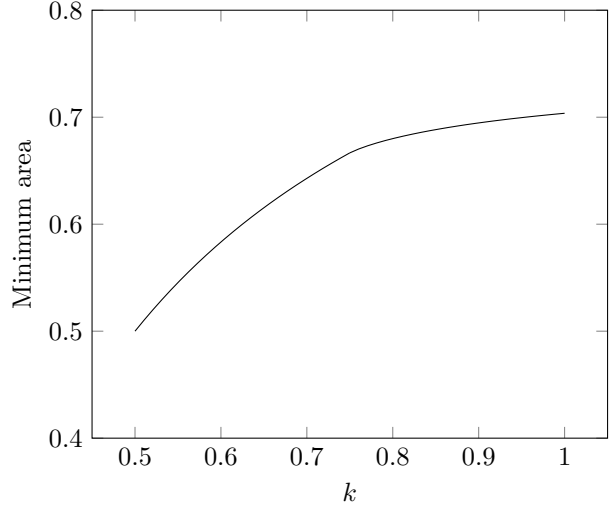

\begin{corollary}
When the permutation is $(2, 3, 1)$, we can always fill at least $\frac{2}{3}$ of the square, and this bound is tight if and only if $P_1 = \left(\frac{1}{3}, \frac{1}{3}\right)$, $P_2 = \left(\frac{2}{3}, \frac{2}{3}\right)$, and $x_3 = 1$.
\end{corollary}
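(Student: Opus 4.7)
The plan is to apply Theorem~\ref{theorem:start2_end1} to the $(2, 3, 1)$ case with $n = 4$, using that this permutation starts with $2$ and ends with $1$. First I would identify the rectangular region $R$ with opposite vertices $P_1$ and $(x_3, 1)$. Since $x_1 \leq x_2 \leq x_3$ and $y_1 \leq y_2$ (because $\pi(1) = 2 < \pi(2) = 3$), the point $P_2$ lies in $R$, and together with the lower-left corner of $R$ it forms the two-point increasing case. By Corollary~\ref{theorem:increasing_case} with $n = 2$, the induced packing of $R$ covers at least $\frac{3}{4}$ of $R$, so the proportion $k$ appearing in Theorem~\ref{theorem:start2_end1} satisfies $k \geq \frac{3}{4}$.

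Next I would substitute $k = \frac{3}{4}$ into the $k \geq \frac{3}{4}$ branch of Theorem~\ref{theorem:start2_end1}. Since $16k^2 - 12k$ vanishes at $k = \frac{3}{4}$, the square-root term disappears and the formula collapses to $1 - 1 + \frac{32}{27}\cdot\frac{9}{16} = \frac{2}{3}$. To convert this endpoint value into a bound valid for every admissible $k$, I would verify that the function
\[ f(k) = 1 - \frac{4k}{3} + \frac{32k^2}{27} + \left(\frac{2}{9} - \frac{8k}{27}\right)\sqrt{16k^2 - 12k} \]
is nondecreasing on $[\frac{3}{4}, 1]$. A sanity check is $f(1) = \frac{19}{27}$, which agrees with the all-decreasing value from Theorem~\ref{theorem:decreasing_case} for $n = 3$, consistent with the fact that the induced packing of $R$ becomes trivial exactly when the $P_2$-subproblem degenerates.

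For tightness I would trace each inequality back to its equality case. Equality in $k \geq \frac{3}{4}$ forces $P_2$ to be the midpoint of the diagonal of $R$ by the tight case of Corollary~\ref{theorem:increasing_case}. Equality at the optimizers of Theorem~\ref{theorem:start2_end1} with $k = \frac{3}{4}$ gives $x_3 = \frac{4k - \sqrt{16k^2 - 12k}}{3} = 1$ and then $x_1 = x_3\bigl(1 - \tfrac{x_3}{2k}\bigr) = \frac{1}{3}$, whence $y_1 = \frac{x_1}{x_3} = \frac{1}{3}$. Combining these yields $P_1 = (\tfrac{1}{3}, \tfrac{1}{3})$; the midpoint condition on $R$, whose corners are $(\tfrac{1}{3}, \tfrac{1}{3})$ and $(1, 1)$, forces $P_2 = (\tfrac{2}{3}, \tfrac{2}{3})$; and only $x_3 = 1$ is pinned down for $P_3$ (its $y$-coordinate is free beyond what $\pi = (2,3,1)$ requires, since an anchor with $x_3 = 1$ contributes zero area).

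The main obstacle will be the monotonicity check on $f$ over $[\frac{3}{4}, 1]$: the square root prevents a one-line derivative computation, and it is precisely this step that legitimizes substituting the worst-case $k = \frac{3}{4}$ into the formula. I expect to handle it by differentiating $f$ and showing the derivative is nonnegative, or more conceptually by arguing that the optimization underlying Theorem~\ref{theorem:start2_end1} produces an optimum that is monotone in the input $k$. Once that is in place, the corollary is just bookkeeping of the equality cases.
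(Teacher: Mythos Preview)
Your approach is the same as the paper's: recognize the induced packing on $R$ as the two-point increasing case (so $k \geq \tfrac{3}{4}$) and then invoke Theorem~\ref{theorem:start2_end1} at $k = \tfrac{3}{4}$ to get $\tfrac{2}{3}$, tracing the equality cases exactly as you do. The monotonicity in $k$ that you flag as the main obstacle is immediate via the conceptual route you already suggest---the total-area expression in the proof of Theorem~\ref{theorem:start2_end1} is affine and nondecreasing in $k$ for each fixed $(x_1,x_{n-1})$, so its minimum over those variables is nondecreasing in $k$---and the paper simply takes this for granted rather than differentiating $f$.
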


\begin{proof}
If the permutation is $(2, 3, 1)$, then the rectangle with opposite vertices $P_1$ and $(x_3, 1)$ has an induced packing equivalent to the 2 increasing points case. Thus, by Theorem~\ref{theorem:increasing_case}, we can fill at least $\frac{3}{4}$ of this area. Applying Theorem~\ref{theorem:start2_end1} with $k = \frac{3}{4}$, we see that we can fill at least $\frac{2}{3}$ of the area.
\end{proof}

\begin{remark}
Unlike the case of increasing and decreasing runs, this case has many configurations with the same minimal area. Point $P_3$ can be anywhere on the line segment connecting $(1, 0)$ to $\left(1, \frac{1}{3}\right)$.
\end{remark}

\subsection{Permutation (3, 1, 2)}
Note that this permutation is the inverse of $(2, 3, 1)$, so the minimum area is the same.

\subsection{Permutation (3, 2, 1)}
This is the all decreasing case with $n = 4$, so the minimum area is $\frac{175}{256}$.

\subsection{The final bound for four dots}

This means that when $n = 4$, the worst case area is $\frac{5}{8}$, when \[S = \{(0, 0), \left(\frac{1}{4}, \frac{1}{4}\right), \left(\frac{1}{2}, \frac{1}{2}\right), \left(\frac{3}{4}, \frac{3}{4}\right)\}.\]

\section{Sparse decreasing case}
\label{sec:sparse_decreasing}

We now consider permutations that have special decreasing subsequences, but are not necessarily decreasing themselves.

\begin{definition}
For a permutation $(a_1, \ldots, a_{n - 1})$, we define its \emph{greedy decreasing subsequence} to be the sequence containing every element, including $a_1$, that is less than all preceding elements.
\end{definition}

Equivalently, we can build this subsequence starting from the end of the permutation. We include 1, and then pick the smallest number to the left of one. Continue recursively.

For example, the greedy decreasing subsequence of $(7, 5, 9, 6, 4, 1, 3, 2)$ is $(7, 5, 4, 1)$, and the greedy decreasing subsequence of any permutation starting with $1$ is just $(1)$.

\begin{definition}
For a set of points in the unit square, the \textit{untouchable points} are the points corresponding to the greedy decreasing subsequence of the permutation.
\end{definition}

Note that only rectangles anchored at the origin can be bounded by the untouchable points. No other maximal rectangles can touch the untouchable points.

Every other point, excluding the origin, is above and to the right of at least one untouchable point.

\begin{definition}
We say that a permutation $(a_1, a_2, \ldots, a_{n - 1})$ is \emph{$m$-sparse decreasing} if there are no subsequences of $m$ or more consecutive terms, none of which are in the greedy decreasing subsequence.
\end{definition}

Informally, a permutation is $m$-sparse decreasing if we can delete consecutive strings of fewer than $m$ elements to get the greedy decreasing subsequence.

Note that 1-sparse decreasing permutations are simply decreasing permutations.

For example, $(2, 1, 4, 3)$ is 3-sparse, as we can delete the last two elements to get $(2, 1)$, which is the greedy decreasing subsequence. However, $(2, 1, 4, 3)$ is not 2-sparse, as we would need to delete both $4$ and $3$, which are 2 consecutive elements.

We now show that we can always fill some fraction, dependent only on $m$, of the staircase region defined by the greedy decreasing subsequence, for any $m$-sparse permutation.

\begin{lemma}
\label{lemma:sparse_decreasing_staircase}
If the permutation is $m$-sparse decreasing, and we can fill at least $k_m$ of any square with $m$ or fewer points including the origin, then we can fill at least $k_m$ of the staircase region defined by the untouchable points.
\end{lemma}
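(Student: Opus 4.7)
The plan is to decompose the staircase region into vertical strips, one associated to each untouchable point, and pack each strip independently using the given hypothesis.

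List the untouchable points by increasing $x$-coordinate as $U_1, U_2, \ldots, U_s$, writing $U_j = P_{i_j}$ in the left-to-right indexing, so that $1 = i_1 < i_2 < \cdots < i_s$ and $y_{i_1} > y_{i_2} > \cdots > y_{i_s}$. Setting $x_{i_{s+1}} := 1$, define the strip
\[ S_j := [x_{i_j}, x_{i_{j+1}}] \times [y_{i_j}, 1]. \]
A short verification shows that the $S_j$ have pairwise disjoint interiors and that their union is exactly the staircase region: each $S_j \subseteq [x_{i_j},1]\times[y_{i_j},1]$ lies in the staircase, and any point $(x,y)$ in the staircase lies in $S_j$ for the largest $j$ with $x_{i_j} \le x$, using $y_{i_j} \le y_{i_{j-1}} \le \cdots \le y_{i_1}$.

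Next I would check which configuration points fall into each strip. Any non-untouchable $P_k$ with $i_j < k < i_{j+1}$ has $x$-coordinate in $[x_{i_j}, x_{i_{j+1}}]$ by the ordering, and $y$-coordinate strictly greater than $y_{i_j}$: otherwise $y_k$ would be less than all of $y_1,\ldots,y_{k-1}$, forcing $P_k$ to be untouchable. Hence all such $P_k$ lie inside $S_j$, together with $U_j$ itself at the lower-left corner. The $m$-sparse decreasing assumption says there are at most $m-1$ such $P_k$, so $S_j$ contains at most $m$ points, with $U_j$ naturally playing the role of the ``origin'' of $S_j$.

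Now I would invoke Lemma~\ref{lemma:scaling}: rescale $S_j$ to a unit square so that $U_j$ lands at the origin. The resulting configuration has at most $m$ points including the origin, so by hypothesis it admits an anchored rectangle packing of area at least $k_m$. Pulling this packing back to $S_j$ produces an anchored packing of $S_j$ of area at least $k_m \cdot \operatorname{Area}(S_j)$, whose rectangles are anchored at points of $S_j$ and contained within $S_j$. Since the strips have disjoint interiors, the packings from different strips do not conflict, so their union is a valid anchored packing of the staircase region of total area at least $k_m$ times its area.

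The only real obstacle is the bookkeeping in the second step: making sure that the non-untouchable points between $U_j$ and $U_{j+1}$ in the permutation really do land inside $S_j$ (both in $x$- and $y$-range), and that the $m$-sparse condition gives exactly the point count needed to match the hypothesis. Once this is set up, the result follows from scaling and summing across the strips, with no estimation required.
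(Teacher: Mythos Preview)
Your proposal is correct and follows essentially the same approach as the paper: partition the staircase region into vertical rectangles anchored at the untouchable points, use the $m$-sparse hypothesis to bound the number of points in each rectangle by $m$, and apply the filling hypothesis (via scaling) to each rectangle. You supply more of the bookkeeping details than the paper does, but the underlying argument is identical.
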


\begin{proof}
Partition the staircase region into vertical rectangles anchored at the untouchable points, as shown in Figure~\ref{fig:almost_decreasing}.

\begin{figure}
    \centering
    \includegraphics{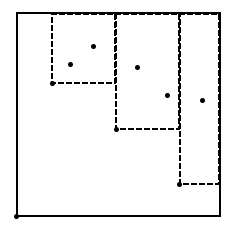}
    \caption{Example with $m = 3$}
    \label{fig:almost_decreasing}
\end{figure}

As the permutation is $m$-sparse decreasing, each rectangle will have fewer than $m$ points in its interior. Counting the point at the lower left corner of each rectangle, we see that there are at most $m$ points in each rectangle. Thus, we can fill at least $k_m$ of each rectangle, meaning we can fill at least $k_m$ of the entire staircase region.
\end{proof}

We now prove a similar result on how much of the whole square we can fill.

\begin{theorem}
\label{theorem:k_l_staircase}
If we can fill a proportion $k$ of the staircase region, and there are $\ell$ points in the greedy decreasing subsequence, then we can fill at least
\[k\left(1 - \left(1 - \frac{1}{k(\ell + 1)}\right)^{\ell + 1}\right)\]
of the whole square.
\end{theorem}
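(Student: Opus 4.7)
The plan is to reduce to a locally minimal configuration (legal because the maximum area varies continuously in the points and the minimum over admissible configurations is attained) and apply Lemma~\ref{lemma:maximal_at_origin}: the $\ell + 1$ maximal rectangles anchored at the origin all share a common area $A$. Writing the untouchable points as $Q_i = (\alpha_i, \beta_i)$ with $\alpha_1 < \cdots < \alpha_\ell$ and $\beta_1 > \cdots > \beta_\ell$, these equal-area conditions become $\alpha_1 = A$, $\alpha_{i+1}\beta_i = A$ for $1 \leq i \leq \ell - 1$, and $\beta_\ell = A$.

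Next I would compute the area $T$ of the stair-step region (the complement of the staircase region in the unit square) by slicing horizontally at the heights $\beta_1, \beta_2, \ldots, \beta_\ell$, which gives
\[T = \alpha_1(1 - \beta_1) + \sum_{i=2}^{\ell}\alpha_i(\beta_{i-1} - \beta_i) + \beta_\ell.\]
After substituting $\alpha_{i+1}\beta_i = A$ and $\alpha_1 = \beta_\ell = A$, this telescopes to the compact form $T = (\ell+1)A - \sum_{i=1}^{\ell}\alpha_i\beta_i$. Since only a single rectangle can be placed at the origin (any two maximal rectangles there overlap) and every rectangle anchored at another point lies inside the staircase region of area $1 - T$, the total fillable area in this configuration equals $A + k(1 - T)$.

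To minimize this over all admissible configurations, I would substitute $\beta_i = A/\alpha_{i+1}$ for $i < \ell$ and $\beta_\ell = A$ to rewrite
\[\sum_{i=1}^{\ell}\alpha_i\beta_i = A\left(\frac{\alpha_1}{\alpha_2} + \frac{\alpha_2}{\alpha_3} + \cdots + \frac{\alpha_{\ell - 1}}{\alpha_\ell} + \alpha_\ell\right).\]
The $\ell$ summands on the right have product $\alpha_1 = A$, so AM-GM yields $\sum_{i=1}^{\ell}\alpha_i\beta_i \geq \ell A^{(\ell+1)/\ell}$, with equality when the $\alpha_i$ form a geometric progression. Substituting this bound, the total fillable area is at least
\[g(A) := k + A\bigl(1 - k(\ell+1)\bigr) + k\ell A^{(\ell+1)/\ell}.\]

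Finally, I would minimize $g(A)$ over $A \in (0,1]$. Differentiating gives $g'(A) = 1 - k(\ell+1)\bigl(1 - A^{1/\ell}\bigr)$, which vanishes at the critical point $A^* = u^\ell$ where $u := 1 - 1/(k(\ell+1))$. Plugging $A = u^\ell$ back into $g$ and using the defining identity $k(\ell+1)(1 - u) = 1$ to eliminate $k$ causes the two terms involving $u^\ell$ and $u^{\ell+1}$ to collapse cleanly onto $g(A^*) = k\bigl(1 - u^{\ell+1}\bigr)$, which is exactly the claimed bound. The main obstacle is this final simplification: tracking the cancellation between the $A$-linear and $A^{(\ell+1)/\ell}$ contributions requires care, but rewriting everything in terms of $u$ from the start makes it routine, and the structure of the argument closely mirrors Theorem~\ref{theorem:decreasing_case} and Lemma~\ref{lemma:start1} (the latter is in fact recovered as the $\ell = 1$ case).
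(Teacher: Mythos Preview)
Your proposal is correct and follows essentially the same approach as the paper: apply Lemma~\ref{lemma:maximal_at_origin} to get the equal-area condition, compute the staircase area, use AM--GM to reduce to a one-variable optimization, and solve for the critical point. The only cosmetic differences are that you slice the complement region horizontally while the paper slices the staircase vertically, and you parametrize by the common area $A$ while the paper parametrizes by $h_\ell = A^{1/\ell}$; your $g(A)$ is exactly the paper's expression $k + (1 - k(\ell+1))h_\ell^\ell + k\ell h_\ell^{\ell+1}$ after the substitution $A = h_\ell^\ell$.
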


\begin{proof}
Let $\ell$ be the number of untouchable points, and let $(h_i, v_i)$ be the $i^{th}$ untouchable point.

Similarly as in the decreasing case in Section~\ref{sec:decreasing}, we have that the maximal rectangles anchored at the origin have upper right corners $(h_1, 1)$, $(h_2, v_1)$, $\ldots$, $(h_{\ell}, v_{\ell - 1})$, $(1, v_{\ell})$. We have from Lemma~\ref{lemma:maximal_at_origin} that all these rectangles have the same area.

By the same computation as in the decreasing case, we find that the area of the staircase region is
\[1 - \left(\ell + 1\right)h_1 + h_1\left(\frac{h_1}{h_2} + \frac{h_2}{h_3} + \cdots + \frac{h_{\ell - 1}}{h_{\ell}} + h_{\ell}\right).\]
As we can only fill $k$ of the staircase region, the total area, when we add the rectangle anchored at the origin, is
\[h_1 + k\left(1 - \left(\ell + 1\right)h_1 + h_1\left(\frac{h_1}{h_2} + \cdots + \frac{h_{\ell - 1}}{h_{\ell}} + h_{\ell}\right)\right).\]
If $h_1$ is fixed, we see that this expression is minimized when
\[\frac{h_1}{h_2} = \frac{h_2}{h_3} = \cdots = \frac{h_{\ell - 1}}{h_{\ell}} = h_{\ell}.\]
This means that $h_1 = h_{\ell}^{\ell}$, and the area is
\[k + (1 - k(\ell + 1))h_{\ell}^{\ell} + k\ell h_{\ell}^{\ell + 1}.\]

Taking the derivative with respect to $h_{\ell}$, we see that this expression is minimized when
\[\ell(1 - k(\ell + 1))h_{\ell}^{\ell - 1} + k\ell(\ell + 1)h_{\ell}^{\ell} = 0,\]
or
\[1 - k(\ell + 1) + k(\ell + 1)h_{\ell} = 0,\]
meaning
\[h_{\ell} = 1 - \frac{1}{k(\ell + 1)}.\]
Plugging in this value of $h_\ell$, we see that the area is at least
\[k + (1 - k(\ell + 1))\left(1 - \frac{1}{k(\ell + 1)}\right)^{\ell} + k\ell \left(1 - \frac{1}{k(\ell + 1)}\right)^{\ell + 1}.\]
This expression can be simplified to
\[k\left(1 - \left(1 - \frac{1}{k(\ell + 1)}\right)^{\ell + 1}\right).\]
\end{proof}

\begin{corollary}
\label{corollary:k_staircase}
If we can fill a proportion $k$ of the staircase region, then we can fill at least
\[k\left(1 - e^{-\frac{1}{k}}\right)\]
of the whole square.
\end{corollary}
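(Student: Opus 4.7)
The plan is to deduce the corollary directly from Theorem~\ref{theorem:k_l_staircase} by taking a worst-case bound over the number $\ell$ of untouchable points. Since Theorem~\ref{theorem:k_l_staircase} gives, for any permutation with $\ell$ untouchable points, a lower bound of $k\bigl(1-(1-\tfrac{1}{k(\ell+1)})^{\ell+1}\bigr)$ on the fillable proportion of the square, it suffices to show that this quantity, viewed as a function of $\ell$, is bounded below by $k(1-e^{-1/k})$ uniformly in $\ell \geq 1$.

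The key analytic step is the monotonicity claim: for fixed $a>0$, the sequence $(1-a/N)^N$ is strictly increasing in $N$ for $N>a$, and converges to $e^{-a}$ from below. I would prove this by taking logarithms and using the power series $\ln(1-a/N) = -\tfrac{a}{N}-\tfrac{a^2}{2N^2}-\tfrac{a^3}{3N^3}-\cdots$, so that $N\ln(1-a/N) = -a - \sum_{j\geq 2}\tfrac{a^j}{jN^{j-1}}$, an expression that is manifestly increasing in $N$ and tends to $-a$. Applying this with $a = 1/k$ and $N = \ell+1$ gives
\[ \Bigl(1-\tfrac{1}{k(\ell+1)}\Bigr)^{\ell+1} < e^{-1/k} \quad \text{for every } \ell \geq 1. \]

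Multiplying by $-k$ and adding $k$ reverses the inequality and yields
\[ k\Bigl(1 - \bigl(1-\tfrac{1}{k(\ell+1)}\bigr)^{\ell+1}\Bigr) \;>\; k\bigl(1 - e^{-1/k}\bigr). \]
Combined with Theorem~\ref{theorem:k_l_staircase}, this proves the corollary. There is no real obstacle here; the only point requiring any care is to ensure that the statement we use ($N \mapsto (1-a/N)^N$ is increasing to $e^{-a}$) is invoked in the correct regime, namely with $N = \ell+1 \geq 2$ and $a = 1/k \leq 2$ (since we always have $k \geq 1/2$ in the relevant range, though the monotonicity itself holds whenever $N > a$). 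The inequality in the corollary is thus in fact strict for every finite $\ell$, and tight only in the limit.
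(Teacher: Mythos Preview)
Your proposal is correct and follows essentially the same approach as the paper: invoke Theorem~\ref{theorem:k_l_staircase}, then use that $(1-\tfrac{1}{k(\ell+1)})^{\ell+1}$ is increasing in $\ell$ with limit $e^{-1/k}$ to pass to the uniform bound. The paper simply asserts this monotonicity and limit, whereas you supply a proof via the power series of $\ln(1-a/N)$; otherwise the arguments are identical.
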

\begin{proof}
By Theorem~\ref{theorem:k_l_staircase}, we can fill at least $k\left(1 - \left(1 - \frac{1}{k(\ell + 1)}\right)^{\ell + 1}\right)$ of the square.

Because this expression is decreasing as $\ell$ increases, and approaches
\[k\left(1 - e^{-\frac{1}{k}}\right)\]
as $\ell$ approaches infinity, we see that the area is at least
\[k\left(1 - e^{-\frac{1}{k}}\right).\]
\end{proof}

Using this result, we prove the Main Conjecture~\ref{conjecture:main} for 3-sparse decreasing permutations.

\begin{corollary}
If the permutation is 3-sparse decreasing, then we can fill at least $\frac{2}{3}\left(1 - e^{-\frac{3}{2}}\right) \approx 0.5179$ of the whole square.
\end{corollary}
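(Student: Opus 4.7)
The plan is to chain together Lemma~\ref{lemma:sparse_decreasing_staircase} (which reduces the problem to bounding the fill ratio of small configurations) with Corollary~\ref{corollary:k_staircase} (which converts a staircase bound into a whole-square bound). Concretely, I would show first that the minimum fill ratio over all configurations of $3$ or fewer points is exactly $k_3 = \tfrac{2}{3}$, and then apply Corollary~\ref{corollary:k_staircase} with $k = k_3$.

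The first step is to compute $k_3$. A configuration with at most $3$ points (including the origin) has $0$, $1$, or $2$ non-origin points, so at most one non-trivial permutation per size. For one point we fill the whole square; for two points the examples section shows the minimum area is $\tfrac{3}{4}$; for three points there are only the two permutations $(1,2)$ and $(2,1)$. By Corollary~\ref{theorem:increasing_case} with $n = 3$ the increasing case forces area $\tfrac{1}{2} + \tfrac{1}{6} = \tfrac{2}{3}$, and by Theorem~\ref{theorem:decreasing_case} with $n = 3$ the decreasing case forces area $1 - (2/3)^3 = \tfrac{19}{27}$. Taking the minimum, the worst case among all configurations of size at most $3$ is $\tfrac{2}{3}$, attained in the three-point increasing case. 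Hence $k_3 = \tfrac{2}{3}$.

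Next, since the permutation is $3$-sparse decreasing, Lemma~\ref{lemma:sparse_decreasing_staircase} applies with $m = 3$ and yields a packing that fills at least $k_3 = \tfrac{2}{3}$ of the staircase region defined by the untouchable points. Feeding this lower bound into Corollary~\ref{corollary:k_staircase} with $k = \tfrac{2}{3}$ gives a whole-square fill of at least
\[\frac{2}{3}\left(1 - e^{-\frac{1}{2/3}}\right) = \frac{2}{3}\left(1 - e^{-3/2}\right) \approx 0.5179,\]
which is the claimed inequality.

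There is essentially no technical obstacle beyond correctly identifying the worst small-configuration bound: all the analytic work has already been absorbed into Lemma~\ref{lemma:sparse_decreasing_staircase} (vertical-strip decomposition of the staircase) and Corollary~\ref{corollary:k_staircase} (the asymptotic $\ell \to \infty$ bound via Theorem~\ref{theorem:k_l_staircase}). The only point requiring care is that $k_3$ must be taken as the minimum across \emph{all} permutations on $\leq 3$ points, not just the decreasing one; since the increasing case beats the decreasing case at $n = 3$, it is the increasing value $\tfrac{2}{3}$ that governs the bound.
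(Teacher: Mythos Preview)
Your proposal is correct and follows exactly the same route as the paper: establish that any configuration with at most three points admits a packing covering at least $\tfrac{2}{3}$ of the square, invoke Lemma~\ref{lemma:sparse_decreasing_staircase} with $m=3$ to get a $\tfrac{2}{3}$ bound on the staircase, and then apply Corollary~\ref{corollary:k_staircase}. If anything, your write-up is more careful than the paper's, which simply asserts the $\tfrac{2}{3}$ three-dot bound with a cross-reference, whereas you explicitly check both the increasing and decreasing three-point permutations and take the minimum.
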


\begin{proof}
We have shown at the end of Section~\ref{sec:decreasing} that we can fill at least $\frac{2}{3}$ of any square with three dots in it. Thus, by Lemma~\ref{lemma:sparse_decreasing_staircase}, we can fill at least $\frac{2}{3}$ of the staircase region. Plugging this into Corollary~\ref{corollary:k_staircase}, we see that we can fill at least $\frac{2}{3}\left(1 - e^{-\frac{3}{2}}\right)$ of the whole square.
\end{proof}

\subsection{9 or fewer points}
\label{subsection:9_dots}
Using our results in the sparse decreasing case, we prove the Main Conjecture~\ref{conjecture:main} in the case that $n \leq 9$.

\begin{theorem}
If $n \leq 9$, then we can fill at least $\frac{1}{2}$ of the whole square.
\end{theorem}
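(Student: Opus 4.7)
\emph{Proof plan.} We prove the bound by strong induction on $n$. The base cases $n \leq 4$ are already established in Section~\ref{sec:four_dots}, where we have $m(4) = \tfrac{5}{8}$, $m(3) = \tfrac{2}{3}$, $m(2) = \tfrac{3}{4}$, and $m(1) = 1$, all at least $\tfrac{1}{2}$. For the inductive step, fix $5 \leq n \leq 9$ and assume $m(k) \geq \tfrac{1}{2}$ for every $k < n$. Take a locally minimal configuration achieving $m(n)$; by Lemma~\ref{lemma:maximal_at_origin} its untouchable points $Q_1, \ldots, Q_\ell$ (with $\ell$ the length of the greedy decreasing subsequence) satisfy the equal-area relations $h_1 = h_2 v_1 = \cdots = h_\ell v_{\ell-1} = v_\ell$. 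We split by $\ell$.

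\emph{Extreme cases.} When $\ell = 1$, the permutation begins with the value $1$, so the rectangle above and to the right of $P_1$ is a subproblem on $n - 1$ points; by the inductive hypothesis it is filled to proportion at least $m(n - 1) \geq \tfrac{1}{2}$, and Lemma~\ref{lemma:start1} with $k = \tfrac{1}{2}$ gives an area of at least $1 - \tfrac{1}{4k} = \tfrac{1}{2}$ for the whole unit square. When $\ell = n - 1$ the permutation is strictly decreasing, and Theorem~\ref{theorem:decreasing_case} gives $1 - \left(1 - \tfrac{1}{n}\right)^n > \tfrac{1}{2}$.

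\emph{Intermediate cases.} For $2 \leq \ell \leq n - 2$, each gap of $c_i$ non-untouchable points between consecutive untouchable points lies in the corresponding step $R_i$ of the staircase, and together with the lower-left corner of $R_i$ forms a $(c_i + 1)$-point subproblem. By Lemma~\ref{lemma:scaling} and the inductive hypothesis, $R_i$ can be filled to proportion $k_i := m(c_i + 1)$; we use the explicit values $m(2) = \tfrac{3}{4}$, $m(3) = \tfrac{2}{3}$, $m(4) = \tfrac{5}{8}$ and the inductive bound $m(c + 1) \geq \tfrac{1}{2}$ for $c \geq 4$. If every $c_i \leq 2$ the permutation is $3$-sparse decreasing and the $3$-sparse decreasing corollary of Section~\ref{sec:sparse_decreasing} yields area at least $\tfrac{2}{3}\left(1 - e^{-3/2}\right) > \tfrac{1}{2}$. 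Otherwise some $c_{i_0} \geq 3$; since $\sum_i c_i = n - 1 - \ell \leq 8 - \ell$, at most one gap can be large, and all remaining gaps have $c_j \leq 2$ so that $k_j \in \{1, \tfrac{3}{4}, \tfrac{2}{3}\}$.

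\emph{Finishing and main obstacle.} For the remaining compositions we bound
\[
\mathrm{Area} \;\geq\; h_1 + \sum_{i=1}^{\ell} k_i \cdot \mathrm{area}(R_i),
\]
substitute the equal-area relations $v_i = h_1/h_{i+1}$ (with $v_\ell = h_1$) from Lemma~\ref{lemma:maximal_at_origin}, minimize over the ratios $h_j / h_{j+1}$ by AM-GM exactly as in Theorems~\ref{theorem:decreasing_case} and \ref{theorem:k_l_staircase}, and verify the resulting single-variable inequality by locating its critical point. The hard part is precisely this last verification: the uniform sparse bound $k\left(1 - \left(1 - \tfrac{1}{k(\ell+1)}\right)^{\ell+1}\right)$ from Theorem~\ref{theorem:k_l_staircase} with $k = \tfrac{1}{2}$ equals $\tfrac{13}{27}$, $\tfrac{15}{32}$, $\tfrac{1441}{3125}, \ldots$ for $\ell = 2, 3, 4, \ldots$, which all lie strictly below $\tfrac{1}{2}$ (with limit exactly $\tfrac{1}{2}$), so a uniform application of Theorem~\ref{theorem:k_l_staircase} cannot suffice. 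We must exploit the inflated $k_j$'s forced by the small gaps to push the weighted fill strictly past $\tfrac{1}{2}$. Because $n \leq 9$ forces $\ell \leq 7$ and $\sum c_i \leq 7$, the set of gap compositions $(c_1, \ldots, c_\ell)$ with $\max c_i \geq 3$ is finite and small, and each composition is verified individually by the optimization above.
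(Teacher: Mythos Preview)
Your overall plan is sound but takes a substantially harder route than the paper, and the final step you flag as the ``hard part'' is left as an unexecuted case check, so the argument as written is incomplete.

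The paper avoids your per-strip weighted optimization entirely by strengthening the induction hypothesis. Rather than assuming only $m(k)\ge\tfrac12$ for $k<n$, it maintains an explicit numerical sequence $k_n$ of lower bounds on $m(n)$ and carries that through the recursion. The observation is that if the greedy decreasing subsequence has length $\ell$, then there are exactly $n-1-\ell$ non-untouchable points in total, so the permutation is $(n-\ell)$-sparse decreasing; Lemma~\ref{lemma:sparse_decreasing_staircase} then says the \emph{entire} staircase can be filled uniformly to proportion $k_{n-\ell}$, and a single application of Theorem~\ref{theorem:k_l_staircase} with this uniform $k$ gives
\[
k_n \;\ge\; \min_{1\le \ell \le n-1}\; k_{n-\ell}\!\left(1 - \Bigl(1 - \tfrac{1}{k_{n-\ell}(\ell+1)}\Bigr)^{\ell+1}\right).
\]
Tabulating from $k_1=1$ yields $k_2=\tfrac34$, $k_3=\tfrac23$, $k_4=\tfrac58$, and then $k_5\approx0.5833$, $k_6\approx0.5615$, $k_7\approx0.5374$, $k_8\approx0.5211$, $k_9\approx0.5080$, all at least $\tfrac12$. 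This directly resolves the obstacle you identified: you are right that plugging $k=\tfrac12$ into Theorem~\ref{theorem:k_l_staircase} fails, but the fix is not to split the staircase into differently weighted strips---it is simply to use the recursively computed $k_{n-\ell}$, which is strictly larger than $\tfrac12$.

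Your approach of tracking individual gap sizes $c_i$ and individual fill ratios $k_i=m(c_i+1)$ is a genuine refinement of the paper's uniform bound, and would in principle give sharper inequalities. But it forces you into a multivariable optimization that is not covered by Theorem~\ref{theorem:k_l_staircase} (which assumes a single $k$), and you defer that optimization to an unstated case-by-case check. Since the uniform argument already suffices for $n\le 9$, the extra refinement buys nothing here and leaves a gap where the paper has none.
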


\begin{proof}
If the greedy decreasing subsequence has $\ell$ points in it, and the permutation is $m$-sparse decreasing, then there are $n - \ell - 1$ points that are not the origin or in the greedy decreasing subsequence. Thus, if the greedy decreasing subsequence has $\ell$ elements, then the permutation, in the worst case, is $(n - \ell)$-sparse decreasing.

Now, define $k_n$ to be the minimum amount of the square we can fill if there are $n$ points, and note that $k_1 = 1$.

By Lemma~\ref{lemma:sparse_decreasing_staircase}, we can fill at least $k_{n - \ell}$ of the staircase region. Plugging this into Theorem~\ref{theorem:k_l_staircase}, and letting $\ell$ range from $1$ to $n - 1$, we see that
\[k_n \geq \min_{1 \leq \ell \leq n - 1}\left(k_{n - \ell}\left(1 - \left(1 - \frac{1}{k_{n - \ell}(\ell + 1)}\right)^{\ell + 1}\right)\right).\]

This allows us to compute the following table of lower bounds for $k_n$.

\begin{center}
    \begin{tabular}{c|c}
        $n$ & $k_n$ \\ \hline
        1 & 1.0000 \\
        2 & 0.7500 \\
        3 & 0.6667 \\
        4 & 0.6250 \\
        5 & 0.5833 \\
        6 & 0.5615 \\
        7 & 0.5374 \\
        8 & 0.5211 \\
        9 & 0.5080 \\
        10 & 0.4934
    \end{tabular}
\end{center}

This means that for $n \leq 9$, we have $k_n \geq \frac{1}{2}$.
\end{proof}

Note that this shows that $k_n = \frac{1}{2} + \frac{1}{2n}$ for $n = 1, 2, 3, 4$.

\section{Acknowledgments}
I am grateful to Dr.~Tanya Khovanova for being my mentor, guiding me through this research project, providing many useful suggestions, and helping me write and edit this paper.

I am grateful to Prof.~Yufei Zhao for suggesting this project to me, and for very useful discussions. I also want to thank the PRIMES program for giving me this opportunity.

\printbibliography
\end{document}